\definecolor{darkblue}{rgb}{0.0,0,0.7} 
\newcommand{\darkblue}{\color{darkblue}} 
\definecolor{darkred}{rgb}{0.7,0,0} 
\definecolor{lightgrey}{rgb}{0.7,0.7,0.7} 
\definecolor{meet}{RGB}{255,205,111}
\definecolor{join}{RGB}{0,77,178}
\newtheorem{theorem}{Theorem}[section]
\newtheorem{proposition}[theorem]{Proposition}
\newtheorem{corollary}[theorem]{Corollary}
\newtheorem{lemma}[theorem]{Lemma}
\theoremstyle{definition}
\newtheorem{definition}[theorem]{Definition}
\newtheorem{example}[theorem]{Example}
\newtheorem{remark}[theorem]{Remark}
\newcommand{\nathan}[1]{\todo[size=\tiny,color=green!30]{#1 \\ \hfill --- N.}}
\newcommand{\defn}[1]{\emph{\darkblue #1}}
\newcommand{\Po}{\mathcal{P}}
\newcommand{\Q}{\mathcal{Q}}
\newcommand{\LL}{\mathcal{L}}
\newcommand{\la}{\mathcal{l}}
\newcommand{\JJ}{\mathcal{J}}
\newcommand{\MM}{\mathcal{M}}
\newcommand{\x}{x}
\newcommand{\y}{y}
\newcommand{\row}{\mathrm{row}}
\newcommand{\down}{\mathrm{D}^\gamma}
\newcommand{\up}{\mathrm{U}^\gamma}
\newcommand{\tog}{\mathrm{flip}^\gamma}
\newcommand{\Camb}{\mathrm{Camb}}
\newcommand{\Krew}{\mathrm{Krew}}
\renewcommand{\mod}{\operatorname{mod}}
\newcommand{\Hom}{\operatorname{Hom}}
\title{Rowmotion in slow motion}
\author[H.~Thomas]{Hugh Thomas}
\address[H.~Thomas]{LaCIM, Universit\'e du Qu\'ebec \`a Montr\'eal}
\email{hugh.ross.thomas@gmail.com}
\author[N.~Williams]{Nathan Williams}
\address[N.~Williams]{University of Texas at Dallas}
\email{nathan.f.williams@gmail.com}
\date{\today}
\keywords{}
\subjclass[2000]{Primary 05E45; Secondary 20F55, 13F60}
\begin{document}


\begin{abstract}
Rowmotion is a simple cyclic action on the distributive lattice of order ideals of a poset: it sends the order ideal $x$ to the order ideal generated by the minimal elements not in $x$.  It can also be computed in ``slow motion'' as a sequence of local moves.   We use the setting of trim lattices to generalize both definitions of rowmotion, proving many structural results along the way.  We introduce a flag simplicial complex (similar to the canonical join complex of a semidistributive lattice), and relate our results to recent work of Barnard by proving that extremal semidistributive lattices are trim.  As a corollary, we prove that if $A$ is a representation finite algebra and $\mod A$ has no cycles, then the torsion classes of $A$ ordered by inclusion form a trim lattice.
\end{abstract}

\maketitle

\section{Introduction}





Write $\mathrm{cov}(\Po)$ for the set of cover relations of a poset $\Po$.  Let $\la$ be a second poset, whose elements we call \defn{labels}.  We say that $\gamma: \mathrm{cov}(\Po) \to \la$ is a \defn{labelling} of $\Po$ if any two cover relations involving the same element of $\Po$ are assigned different labels.  Then the set of \defn{downward labels} of an element $\y \in \Po$ is \[\down(\y):=\left\{\gamma(\x \lessdot \y) : \text{ for all } \x \text{ such that } \x \lessdot \y\right\},\]
and its set of \defn{upward labels} is
\[\up(\y) := \left\{\gamma(\y \lessdot z) : \text{ for all } z \text{ such that } \y \lessdot z\right\}.\]

 We say that a labelling $\gamma: \mathrm{cov}(\Po) \to \la$ is \defn{descriptive} if
each element $x \in \Po$ is determined by $\down(\x)$ and also by $\up(\x)$, and if \[\big\{\down(x):x \in \Po\big\}=\big\{\up(x) : x \in \Po\big\}.\]  
When $\gamma$ is descriptive, we define \defn{rowmotion} of $x \in \Po$ by \begin{equation}\row^\gamma(\x):=\text{the unique element } \y \in \Po \text{ such that } \down(\x)=\up(\y).\label{eq:global_row}\end{equation}  

We are interested in a second, slower, way to compute rowmotion.  For any labelling, a \defn{flip} at the label $i \in \la$ on the element $\y \in \Po$ replaces $\y$ by the other endpoint of the unique edge incident to $\y$ labeled by $i$, should such an edge exist.  That is, $\tog_i: \Po \to \Po$ is defined by \[\tog_i (\y) := \begin{cases} \x & \text{if } \gamma(\x \lessdot \y) = i,\\ z & \text{if } \gamma(\y \lessdot z) = i, \\ \y & \text{ otherwise.} \end{cases}\]

We say that rowmotion on a poset with a descriptive labelling can be \defn{computed in slow motion} if we have that $\row^\gamma = \prod_{i \in \la} \tog_{i} $, where the product is taken in the order of any linear extension of $\la$---that is, if rowmotion can be computed as a walk on the Hasse diagram of $\Po$, with the order of the steps respecting the partial order on the labels.

\medskip
We now review two existing examples of rowmotion that can be computed in slow motion.  We will then find a common generalization.


\subsection{Motivation: rowmotion on distributive lattices}

\label{sec:dist_lattice_intro}
Let $\Q$ be a finite poset and write $J(\Q)$ for its distributive lattice of order ideals.   We label a cover relation in $J(\Q)$ by an element of $\Q$ in the following way: \begin{align*}\gamma: \mathrm{cov}(J(\Q)) &\to \Q \\ \gamma(\x \lessdot \y) &:= q  \text{ if } \y = \x \cup \{q\}.\end{align*}  
  Note that $\x$ is
determined by $\down(\x)$ and also by $\up(\x)$: $\down(\x)$ is the antichain in $\Q$ containing the maximal elements of $\x$, and $\up(\x)$ is the antichain containing the minimal elements in $\Q \setminus x$.  Thus, $\gamma$ is descriptive, and rowmotion is defined by~\Cref{eq:global_row} (see~\Cref{sec:examples} for a brief history of its study).  An example of a distributive lattice and its orbits under rowmotion is given in~\Cref{fig:dist_lattice}.




There is a second description of the permutation $\row^\gamma$ as a walk on the Hasse diagram of $J(\Q)$.  For any linear extension $\mathbf{q}$ of $\Q$, rowmotion may be computed as the composition of flips in the ordering on $\Q$ given by $\mathbf{q}$.  Since flips at distinct elements $\x,\y \in \Q$ commute unless $\x \lessdot \y$ or $\y \lessdot \x$ in $\Q$, this definition does not actually depend on the initial choice of linear extension.  

\begin{theorem}[{\cite[Lemma 1]{cameron1995orbits}}]
For any poset $\Q$, rowmotion can be computed in slow motion on the distributive lattice of order ideals $J(\Q)$.
\label{thm:dist}
\end{theorem}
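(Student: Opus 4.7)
The plan is to fix any linear extension $q_1, q_2, \ldots, q_n$ of $\Q$ (which by the commutativity remark just before the theorem gives an unambiguous target), set $P_i := \{q_1, \ldots, q_i\}$---an order ideal of $\Q$ precisely because $\mathbf{q}$ is a linear extension---and write $y_i := \tog_{q_i} \circ \cdots \circ \tog_{q_1}(\x)$. By induction on $i$ I would establish the interpolation invariant
\[y_i = \bigl(\row^\gamma(\x) \cap P_i\bigr) \cup \bigl(\x \cap (\Q \setminus P_i)\bigr);\]
the $i = 0$ case is trivial, and the $i = n$ case is exactly the theorem.

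Before starting the induction I would reformulate $\row^\gamma$ using the identifications $\down(\x) = \max(\x)$ and $\up(\y) = \min(\Q \setminus \y)$ recorded in the text: the defining equation $\down(\x) = \up(\row^\gamma(\x))$ says that $\Q \setminus \row^\gamma(\x)$ is the principal filter generated by $\max(\x)$, so
\[q \in \row^\gamma(\x) \iff q \not\geq r \text{ for every } r \in \max(\x).\]
The use of $\mathbf{q}$ being a linear extension is that every $q' \in \Q$ strictly below $q_i$ lies in $P_{i-1}$ and every $q' \in \Q$ strictly above $q_i$ lies in $\Q \setminus P_i$. Combined with the inductive hypothesis, $y_{i-1}$ agrees with $\row^\gamma(\x)$ on everything strictly below $q_i$ and with $\x$ on everything strictly above $q_i$; only the membership of $q_i$ itself can change between $y_{i-1}$ and $y_i$, and the invariant prescribes exactly the required change.

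The inductive step then becomes a four-case analysis on the memberships $q_i \in \x$ and $q_i \in \row^\gamma(\x)$. The two unequal cases are immediate from the characterization: if $q_i \in \x \setminus \row^\gamma(\x)$, then $q_i \geq r$ for some $r \in \max(\x)$ combined with $q_i \in \x$ and the maximality of $r$ forces $q_i = r \in \max(\x)$, so no cover of $q_i$ lies in $\x$ and hence none lies in $y_{i-1}$, so $\tog_{q_i}$ correctly deletes $q_i$; dually, if $q_i \in \row^\gamma(\x) \setminus \x$, then for every $r \lessdot q_i$ the characterization forces $r \in \row^\gamma(\x)$, so $r \in y_{i-1}$ by the IH, making $q_i$ minimal in $\Q \setminus y_{i-1}$ and $\tog_{q_i}$ correctly inserts it. The two equal cases require $\tog_{q_i}$ to fix $y_{i-1}$, and here I would run a maximal-chain argument. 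If $q_i \in \x \cap \row^\gamma(\x)$ then $q_i \notin \max(\x)$, so some $r' \in \max(\x)$ satisfies $r' > q_i$, and a saturated chain from $q_i$ to $r'$ produces a cover $r \gtrdot q_i$ with $r \in \x$, hence $r \in y_{i-1}$ by the IH, witnessing $q_i$ not maximal in $y_{i-1}$. If $q_i \notin \x \cup \row^\gamma(\x)$ then some $s \in \max(\x)$ satisfies $s < q_i$, and a saturated chain from $s$ to $q_i$ produces $r \lessdot q_i$ with $r \geq s$, hence $r \notin \row^\gamma(\x)$ and $r \notin y_{i-1}$ by the IH, witnessing $q_i$ not minimal in $\Q \setminus y_{i-1}$. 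The whole argument is mainly bookkeeping; the only piece requiring thought is selecting the interpolation invariant and the characterization of $\row^\gamma$ above, after which each case collapses to a one-line chain argument.
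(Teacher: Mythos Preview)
Your proof is correct.  The paper does not prove \Cref{thm:dist} directly---it is cited from Cameron--Fon-der-Flaass---so the natural point of comparison is the paper's proof of the generalization, \Cref{thm:main_thm}, specialized to the distributive case.

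The two arguments are genuinely different.  You track the intermediate order ideal explicitly via the interpolation invariant $y_i=(\row^\gamma(\x)\cap P_i)\cup(\x\cap(\Q\setminus P_i))$, and then reduce the inductive step to a four-case membership analysis for $q_i$.  The paper instead inducts on the length of the lattice: it applies the first flip $\tog_1$, observes that the result lies in one of the intervals $\LL_1=[\hat 0,m_1]$ or $\LL^1=[j_1,\hat 1]$ (each trim of strictly smaller length), and then invokes the inductive hypothesis inside that interval, using \Cref{lem:down} and \Cref{lem:go_up_1} to match the relevant label sets.  In the distributive case this amounts to removing a minimal element $q_1$ of $\Q$ and recursing in $J(\Q\setminus\{q_1\})$ or $J(\Q\setminus\{q:q\geq q_1\})$.

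Your argument is elementary and entirely self-contained: it uses nothing beyond the combinatorics of order ideals and the explicit description of $\row^\gamma$.  The paper's argument is structurally heavier---it needs the trim recurrence and the lemmas controlling how $\down$ behaves under it---but that machinery is exactly what allows the proof to go through for arbitrary trim lattices, where no closed formula for the intermediate $y_i$ is available.
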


\begin{figure}[htbp]
\raisebox{-0.6\height}{\begin{tikzpicture}[scale=1.3]
\node (x0) [fill=white!20] at (0,0) {\scalebox{.5}{\begin{tikzpicture}[scale=.8] \node (a) [draw=black,circle,thick,fill=meet] at (0,0) {$1$}; \node (b) [draw=black,circle,thick,fill=meet] at (2,0) {$2$}; \node (c) [draw=black,circle,thick,fill=meet] at (1,1) {$3$}; \draw[<-,thick] (a) to (c); \draw[<-,thick] (b) to (c); \end{tikzpicture}}};
\node (x1) [fill=white!20] at (-1,1) {\scalebox{.5}{\begin{tikzpicture}[scale=.8] \node (a) [draw=black,circle,thick,fill=join,text=white] at (0,0) {$1$}; \node (b) [draw=black,circle,thick,fill=meet] at (2,0) {$2$}; \node (c) [draw=black,circle,thick,fill=meet] at (1,1) {$3$}; \draw[<-,thick] (a) to (c); \draw[<-,thick] (b) to (c); \end{tikzpicture}}};
\node (j2) [fill=white!20] at (1,1) {\scalebox{.5}{\begin{tikzpicture}[scale=.8] \node (a) [draw=black,circle,thick,fill=meet] at (0,0) {$1$}; \node (b) [draw=black,circle,thick,fill=join,text=white] at (2,0) {$2$}; \node (c) [draw=black,circle,thick,fill=meet] at (1,1) {$3$}; \draw[<-,thick] (a) to (c); \draw[<-,thick] (b) to (c); \end{tikzpicture}}};
\node (x2) [fill=white!20] at (0,2) {\scalebox{.5}{\begin{tikzpicture}[scale=.8] \node (a) [draw=black,circle,thick,fill=join,text=white] at (0,0) {$1$}; \node (b) [draw=black,circle,thick,fill=join,text=white] at (2,0) {$2$}; \node (c) [draw=black,circle,thick,fill=meet] at (1,1) {$3$}; \draw[<-,thick] (a) to (c); \draw[<-,thick] (b) to (c); \end{tikzpicture}}};
\node (x3) [fill=white!20] at (0,3) {\scalebox{.5}{\begin{tikzpicture}[scale=.8] \node (a) [draw=black,circle,thick,fill=join,text=white] at (0,0) {$1$}; \node (b) [draw=black,circle,thick,fill=join,text=white] at (2,0) {$2$}; \node (c) [draw=black,circle,thick,fill=join,text=white] at (1,1) {$3$}; \draw[<-,thick] (a) to (c); \draw[<-,thick] (b) to (c); \end{tikzpicture}}};
\draw[-,thick,color=join] (x0) to node[midway, left] {$1$} (x1) to node[midway, left] {$2$} (x2) to node[midway, left] {$3$} (x3);
\draw[-,thick] (x0) to node[midway, right] {$2$} (j2) to node[midway, right] {$1$} (x2);
\end{tikzpicture}}\hspace{2em}
\xymatrix@C=1ex{
\raisebox{-0.4\height}{	\scalebox{.5}{\begin{tikzpicture}[scale=.8] \node (a) [draw=black,circle,thick,fill=meet] at (0,0) {$1$}; \node (b) [draw=black,circle,thick,fill=meet] at (2,0) {$2$}; \node (c) [draw=black,circle,thick,fill=meet] at (1,1) {$3$}; \draw[<-,thick] (a) to (c); \draw[<-,thick] (b) to (c); \end{tikzpicture}}} \ar@/^2pc/[rr] & & \raisebox{-0.4\height}{\scalebox{.5}{\begin{tikzpicture}[scale=.8]  \node (a) [draw=black,circle,thick,fill=join,text=white] at (0,0) {$1$}; \node (b) [draw=black,circle,thick,fill=join,text=white] at (2,0) {$2$}; \node (c) [draw=black,circle,thick,fill=join,text=white] at (1,1) {$3$}; \draw[<-,thick] (a) to (c); \draw[<-,thick] (b) to (c); \end{tikzpicture}}} \ar@/^2pc/[rr] & & \raisebox{-0.4\height}{\scalebox{.5}{\begin{tikzpicture}[scale=.8]  \node (a) [draw=black,circle,thick,fill=join,text=white] at (0,0) {$1$}; \node (b) [draw=black,circle,thick,fill=join,text=white] at (2,0) {$2$}; \node (c) [draw=black,circle,thick,fill=meet] at (1,1) {$3$}; \draw[<-,thick] (a) to (c); \draw[<-,thick] (b) to (c); \end{tikzpicture}}} \ar@/^2pc/[llll] \\
		& \raisebox{-0.4\height}{\scalebox{.5}{\begin{tikzpicture}[scale=.8] \node (a) [draw=black,circle,thick,fill=join,text=white] at (0,0) {$1$}; \node (b) [draw=black,circle,thick,fill=meet] at (2,0) {$2$}; \node (c) [draw=black,circle,thick,fill=meet] at (1,1) {$3$}; \draw[<-,thick] (a) to (c); \draw[<-,thick] (b) to (c); \end{tikzpicture}}} \ar@/^1pc/[rr] & & \raisebox{-0.4\height}{\scalebox{.5}{\begin{tikzpicture}[scale=.8]  \node (a) [draw=black,circle,thick,fill=meet] at (0,0) {$1$}; \node (b) [draw=black,circle,thick,fill=join,text=white] at (2,0) {$2$}; \node (c) [draw=black,circle,thick,fill=meet] at (1,1) {$3$}; \draw[<-,thick] (a) to (c); \draw[<-,thick] (b) to (c); \end{tikzpicture}}} \ar@/^1pc/[ll]
}
\caption[On the left is the distributive lattice of order ideals of the root poset of type $A_2$.  On the right are its orbits under rowmotion.]{On the left is the distributive lattice of order ideals of the poset \raisebox{-0.4\height}{\scalebox{.5}{\begin{tikzpicture}[scale=.8]  \node (a) [draw=black,circle,thick] at (0,0) {$1$}; \node (b) [draw=black,circle,thick] at (2,0) {$2$}; \node (c) [draw=black,circle,thick] at (1,1) {$3$}; \draw[<-,thick] (a) to (c); \draw[<-,thick] (b) to (c); \end{tikzpicture}}}.  The maximal chain corresponding to adding the vertices of the poset in the order given by their labelling is indicated in blue.  On the right are the orbits of order ideals under rowmotion.}
\label{fig:dist_lattice}
\end{figure}


\subsection{Motivation: The Kreweras complement on Cambrian lattices}
\label{sec:camb_lattice_intro}
The finite Cambrian lattices are a generalization of the well-known Tamari lattices to finite Coxeter groups.
For a finite Coxeter group $W$ with reflections $T$ and Coxeter element $c$,
let $\mathrm{Sort}(W,c)$ denote the $c$-sortable elements.  They form a sublattice of weak order on $W$.  This lattice can also be realized as a quotient of weak order via the projection $\pi_\downarrow^c:W \to \mathrm{Sort}(W,c)$ that sends an element in $W$ to the largest $c$-sortable element below it~\cite{reading2007sortable}.

In $\Camb_c(W)$, we recall that each reflection $t \in T$ occurs as the cover reflection of a unique join-irreducible $c$-sortable element $j_t$, and it is natural to label cover relations by  \begin{align*}\gamma:\mathrm{cov}(\Camb_c(W))&\to T \\ \gamma(\pi_\downarrow^c(\y s) \lessdot \y) &:= t  \text{ if } \y s = t \y.\end{align*}
 (We are thinking of $T$ as ordered by the heap for the $c$-sorting word for the longest element of $W$.)  It follows from Reading's uniform bijection between $c$-sortable elements and $c$-noncrossing partitions~\cite{reading2007clusters} that $\x$ is
determined by $\down(\x)$ and also by $\up(\x)$: $\down(\x)$ encodes a factorization of a $c$-noncrossing partition $\pi$, and $\up(\x)$ similarly encodes its \defn{Kreweras complement}, the noncrossing partition \[\Krew_c(\pi) := \pi^{-1} c.\]
Then $\gamma$ is descriptive and the rowmotion defined by~\Cref{eq:global_row} coincides with the Kreweras complement: it is a cyclic action of order $2h$, where $h$ is the Coxeter number of $W$.  An example of a Cambrian lattice of type $A_2$ and its orbits under the Kreweras complement is given in~\Cref{fig:camb_lattice}.

By analogy with rowmotion on distributive lattices, the authors and Stump discovered a second description of $\Krew_c$~\cite{williams2013cataland,stump2015cataland}.  While the maximal chains of a distributive lattice $J(\Q)$ are in bijection with the linear extensions of its underlying poset $\Q$, the chains of maximal length in $\Camb_c(W)$ are in bijection with the linear extensions of $T$ (thought of as the heap for the $c$-sorting word for the longest element).  It is natural to consider the composition of flips in the order given by a linear extension of $T$---and any choice recovers the Kreweras complement. 

\begin{theorem}[{\cite[Theorem 4.2.7]{williams2013cataland}}]
For $W$ a finite Coxeter group and $c$ a Coxeter element, the Kreweras complement can be computed in slow motion on the Cambrian lattice $\Camb_c(W)$.
\label{thm:camb}
\end{theorem}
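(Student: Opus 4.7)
The plan is to exploit the subword description of $c$-sortable elements inside a fixed reduced word for $w_0$ and interpret the slow-motion product as a deterministic position-by-position toggle process.

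Set up the framework as follows. Fix $\mathbf{w}_0 = s_{i_1} s_{i_2} \cdots s_{i_N}$, the $c$-sorting word of the longest element, which is the longest reduced prefix of the infinite word $c^\infty$. This word supplies the enumeration $t_k := s_{i_1} \cdots s_{i_{k-1}} s_{i_k} s_{i_{k-1}} \cdots s_{i_1}$ of the reflections, and its heap is precisely the partial order on $T$ whose linear extensions we are using. By Reading's theory of sortable elements, every $c$-sortable element $x$ has a unique $c$-sorting word that embeds as a subword of $\mathbf{w}_0$ at a canonical set of positions $I_x \subseteq \{1, \ldots, N\}$, and a cover $x \lessdot y$ in $\Camb_c(W)$ labelled by $t_k$ corresponds exactly to $I_y = I_x \sqcup \{k\}$. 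Under this identification, $\tog_{t_k}$ becomes the operation of toggling position $k$ in the encoding, whenever the toggle lands on a legal (i.e., $c$-sortable) subword.

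The heart of the argument is the following legality lemma: if the positions are processed in any order that is a linear extension of the heap of $\mathbf{w}_0$, then every toggle is legal. The obstructions to toggling position $k$ are created exactly by braid-type incompatibilities at positions heap-below $k$, and these have been resolved by the earlier toggles; equivalently, by the time position $k$ is reached, the current encoding near position $k$ has the local shape required to admit either insertion or deletion of $k$. I would prove this by induction on the heap order, tracking how each toggle affects the inversion set of the current $c$-sortable element in accordance with Reading's characterisation of sortable elements via stacks of antichains in the $c$-Cambrian poset. This legality statement is where the genuine combinatorial work lives and is the main obstacle.

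Once legality is established, the outcome of the product $\prod_{k=1}^{N} \tog_{t_k}$ is forced: it is the unique element $y \in \Camb_c(W)$ whose set of down-labels equals the set of up-labels of $x$, since each step toggles exactly one reflection in or out in a way compatible with this equality. By the descriptiveness recorded in \Cref{sec:camb_lattice_intro}, this element is $\row^\gamma(x)$, which under Reading's bijection to $c$-noncrossing partitions is $\Krew_c(\pi) = \pi^{-1} c$. Invariance under the choice of linear extension of the heap follows from the fact that flips $\tog_t$ and $\tog_{t'}$ commute whenever $t$ and $t'$ are incomparable in the heap, which can be read off the toggle description of $\tog_{t_k}$ in Step~2 above.
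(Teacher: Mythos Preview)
Your setup rests on a claim that fails: that a cover $x \lessdot y$ in $\Camb_c(W)$ labelled by $t_k$ satisfies $I_y = I_x \sqcup \{k\}$. Cambrian lattices are not graded, so covers need not increase length by one; in $\Camb_{s_1s_2}(A_2)$ (\Cref{fig:camb_lattice}) the cover $s_2 \lessdot s_1s_2s_1$ is labelled $1$, yet $I_{s_2} = \{2\}$ while $I_{s_1s_2s_1} = \{1,2,3\}$, a jump of two positions. Even when a cover does add a single position, that position need not match the label: the cover $e \lessdot s_2$ in the same lattice adds position $2$ but carries label $3$. Hence $\tog_{t_k}$ cannot be identified with ``toggle position $k$ in the subword encoding,'' and your legality lemma has no correct target. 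This is not a gap that more work on the induction would close; the positional-toggle model is simply the wrong picture of flips in a Cambrian lattice, and the subsequent claim that the product of such toggles is ``forced'' to equal rowmotion has nothing to stand on.

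The paper does not argue directly in the Cambrian setting; it derives \Cref{thm:camb} as a special case of \Cref{thm:main_thm}, proved by induction on the length of the trim lattice via the decomposition $\LL = \LL_1 \sqcup \LL^1$ of \Cref{lem:decomposition}. For Cambrian lattices this specialises to Reading's Cambrian recurrence on the initial simple reflection of $c$, and the induction tracks how the label $1$ interacts with $\down(x)$ and $\up(x)$ through \Cref{lem:down} and \Cref{lem:go_up_1}, never touching positional subword bookkeeping. If you want a direct Coxeter-theoretic argument, that recurrence is the mechanism that actually carries the induction.
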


\begin{figure}[htbp]
\scalebox{.9}{\raisebox{-0.6\height}{\begin{tikzpicture}[scale=2]
\node (x0) [align=center] at (0,0) {\scalebox{.5}{\begin{tikzpicture}[scale=.8] \node (a) [draw=black,circle,thick,fill=meet] at (0,0) {$1$}; \node (b) [draw=black,circle,thick,fill=meet] at (2,0) {$3$}; \node (c) [draw=black,circle,thick,fill=meet] at (1,1) {$2$}; \draw[<-,thick] (a) to (c); \draw[->,thick] (b) to (c); \end{tikzpicture}} \\ $e$};
\node (x1) [align=center]at (-1,1) {\scalebox{.5}{\begin{tikzpicture}[scale=.8] \node (a) [draw=black,circle,thick,fill=join,text=white] at (0,0) {$1$}; \node (b) [draw=black,circle,thick,fill=meet] at (2,0) {$3$}; \node (c) [draw=black,circle,thick,fill=meet] at (1,1) {$2$}; \draw[<-,thick] (a) to (c); \draw[->,thick] (b) to (c); \end{tikzpicture}} \\ $s_1$};
\node (j2) [align=center] at (1,1) {\scalebox{.5}{\begin{tikzpicture}[scale=.8] \node (a) [draw=black,circle,thick,fill=meet] at (0,0) {$1$}; \node (b) [draw=black,circle,thick,fill=join,text=white] at (2,0) {$3$}; \node (c) [draw=black,circle,thick,fill=white] at (1,1) {$2$}; \draw[<-,thick] (a) to (c); \draw[->,thick] (b) to (c); \end{tikzpicture}} \\ $s_2$};
\node (x2) [align=center] at (-1,2) {$s_1s_2$ \\ \scalebox{.5}{\begin{tikzpicture}[scale=.8] \node (a) [draw=black,circle,thick,fill=join,text=white] at (0,0) {$1$}; \node (b) [draw=black,circle,thick,fill=meet] at (2,0) {$3$}; \node (c) [draw=black,circle,thick,fill=join,text=white] at (1,1) {$2$}; \draw[<-,thick] (a) to (c); \draw[->,thick] (b) to (c); \end{tikzpicture}} };
\node (x3) [align=center] at (0,3) {$s_1s_2s_1$ \\ \scalebox{.5}{\begin{tikzpicture}[scale=.8] \node (a) [draw=black,circle,thick,fill=join,text=white] at (0,0) {$1$}; \node (b) [draw=black,circle,thick,fill=join,text=white] at (2,0) {$3$}; \node (c) [draw=black,circle,thick,fill=join,text=white] at (1,1) {$2$}; \draw[<-,thick] (a) to (c); \draw[->,thick] (b) to (c); \end{tikzpicture}}};
\draw[-,thick,color=join,line width=2pt] (x0) to node[midway, left] {$1$} (x1) to node[midway, left] {$2$} (x2) to node[midway, left] {$3$} (x3);
\draw[-,thick] (x0) to node[midway, right] {$3$} (j2) to node[midway, right] {$1$} (x3);
\end{tikzpicture}}} \hspace{1em}
\xymatrix@C=1ex{
\raisebox{-0.4\height}{\scalebox{.5}{\begin{tikzpicture}[scale=.8] \node (a) [draw=black,circle,thick,fill=join,text=white] at (0,0) {$1$}; \node (b) [draw=black,circle,thick,fill=meet] at (2,0) {$3$}; \node (c) [draw=black,circle,thick,fill=meet] at (1,1) {$2$}; \draw[<-,thick] (a) to (c); \draw[->,thick] (b) to (c); \end{tikzpicture}}}  \ar@/^2pc/[rr] & & \raisebox{-0.4\height}{\scalebox{.5}{\begin{tikzpicture}[scale=.8] \node (a) [draw=black,circle,thick,fill=meet] at (0,0) {$1$}; \node (b) [draw=black,circle,thick,fill=join,text=white] at (2,0) {$3$}; \node (c) [draw=black,circle,thick,fill=white] at (1,1) {$2$}; \draw[<-,thick] (a) to (c); \draw[->,thick] (b) to (c); \end{tikzpicture}}} \ar@/^2pc/[rr]  & & \raisebox{-0.4\height}{\scalebox{.5}{\begin{tikzpicture}[scale=.8] \node (a) [draw=black,circle,thick,fill=join,text=white] at (0,0) {$1$}; \node (b) [draw=black,circle,thick,fill=meet] at (2,0) {$3$}; \node (c) [draw=black,circle,thick,fill=join,text=white] at (1,1) {$2$}; \draw[<-,thick] (a) to (c); \draw[->,thick] (b) to (c); \end{tikzpicture}}} \ar@/^2pc/[llll] \\
& \raisebox{-0.4\height}{\scalebox{.5}{\begin{tikzpicture}[scale=.8] \node (a) [draw=black,circle,thick,fill=meet] at (0,0) {$1$}; \node (b) [draw=black,circle,thick,fill=meet] at (2,0) {$3$}; \node (c) [draw=black,circle,thick,fill=meet] at (1,1) {$2$}; \draw[<-,thick] (a) to (c); \draw[->,thick] (b) to (c); \end{tikzpicture}}}  \ar@/^1pc/[rr]  &  & \raisebox{-0.4\height}{\scalebox{.5}{\begin{tikzpicture}[scale=.8] \node (a) [draw=black,circle,thick,fill=join,text=white] at (0,0) {$1$}; \node (b) [draw=black,circle,thick,fill=join,text=white] at (2,0) {$3$}; \node (c) [draw=black,circle,thick,fill=join,text=white] at (1,1) {$2$}; \draw[<-,thick] (a) to (c); \draw[->,thick] (b) to (c); \end{tikzpicture}}}  \ar@/^1pc/[ll]
}
\caption{On the left is the $c$-Cambrian lattice $\Camb_{c}(A_2)$ for $c=s_1s_2$, which recovers the Tamari lattice on five elements.  The maximal chain corresponding to the labelling of the vertices of the poset is indicated in blue, and elements are labeled by their $c$-sorting word, as well as by their inversion set (in blue) and the complement of the inversion set of the corresponding $c$-antisortable element (in orange).  On the right are the orbits of $c$-sortable elements under the Kreweras complement.   The similarities with~\Cref{fig:dist_lattice} are not a coincidence---see~\Cref{rem:same_number} and \cite{panyushev2009orbits,armstrong2013uniform}.}
\label{fig:camb_lattice}
\end{figure}
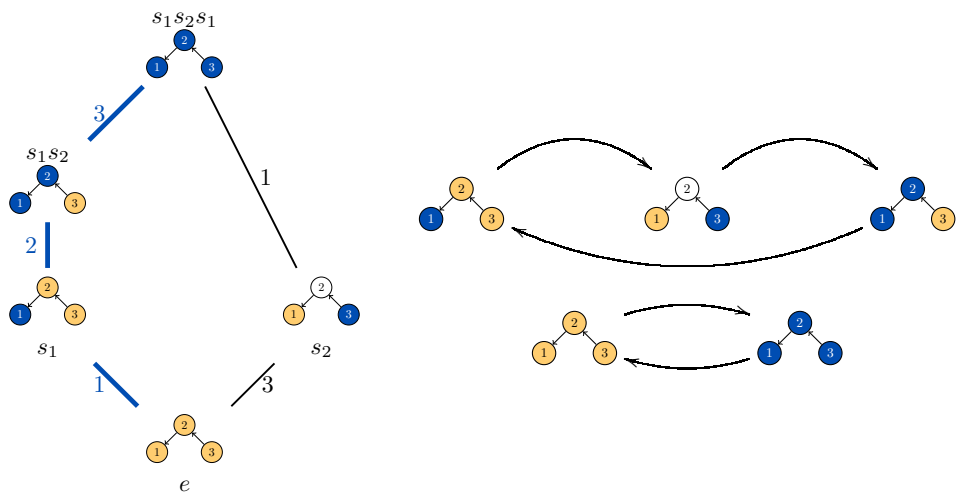


\subsection{Unification: trim lattices}The similarity of the situations of~\Cref{sec:dist_lattice_intro,sec:camb_lattice_intro} suggests a unification.
A \defn{trim lattice} is an extremal left modular lattice (see~\Cref{def:extremal,def:left_modular}).  Trim lattices were introduced by the first author as an analogue of distributive lattices that do not require the lattice to be graded~\cite{thomas2006analogue}.    In particular, distributive lattices are exactly the graded trim lattices~\cite[Theorem 2]{thomas2006analogue}; the finite Cambrian lattices of~\Cref{sec:camb_lattice_intro} are also trim~\cite{thomas2006analogue,IT,muhle2016trimness}; we prove in~\Cref{cor:rep_are_trim} that trim lattices arise naturally in the representation theory of finite-dimensional algebras as the lattices of torsion classes for suitably chosen algebras.

Our main theorem identifies trim lattices as a setting for a common generalization of~\Cref{thm:dist,thm:camb}:

\begin{theorem}
    Any trim lattice has a descriptive labelling, and rowmotion can be computed in slow motion.
\label{thm:main_thm}
\end{theorem}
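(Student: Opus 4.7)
The plan is to construct a canonical labelling of cover relations directly from the structure theory of extremal left modular lattices, verify the descriptive property, and then establish slow motion by induction along a linear extension of the label poset.

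\textbf{Step 1: the labelling.} By extremality, $\LL$ admits a maximal chain of length $n$ equal to both the number of join-irreducibles and of meet-irreducibles; by left modularity, such a chain may be chosen to consist of left modular elements $\hat 0 = m_0 \lessdot m_1 \lessdot \cdots \lessdot m_n = \hat 1$. I would index the join-irreducibles as $j_1,\ldots,j_n$ so that $m_i = m_{i-1} \vee j_i$ and, dually, the meet-irreducibles as $m^1,\ldots,m^n$ so that $m_{i-1} = m_i \wedge m^i$; this pairing is the extremal bijection. A cover $\x \lessdot \y$ is labelled by the index $i$ such that $\y = \x \vee j_i$ (equivalently $\x = \y \wedge m^i$), and $\la$ is $\{1,\ldots,n\}$ equipped with the partial order coming from a Galois-type graph on the pairs $(j_i, m^i)$---calibrated so that incomparability corresponds to commuting flips.

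\textbf{Step 2: descriptive property.} I would show that $\down(\x)$ records the indices appearing in the canonical join representation of $\x$, and dually that $\up(\x)$ records the indices of its canonical meet representation. Since trim lattices are semidistributive, these representations exist and are unique, so $\x$ is recovered from either of $\down(\x)$, $\up(\x)$. The two families $\{\down(\x) : \x \in \LL\}$ and $\{\up(\x) : \x \in \LL\}$ coincide because both are precisely the antichains (independent sets) of the Galois graph on $\la$, matched via the $j_i \leftrightarrow m^i$ bijection.

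\textbf{Step 3: slow motion.} Fix a linear extension $i_1 < \cdots < i_n$ of $\la$ and set $\y_k := \tog_{i_k} \circ \cdots \circ \tog_{i_1}(\x)$. The invariant to establish by induction on $k$ is
\[ \down(\y_k) \cap \{i_1,\ldots,i_k\} = \up(\x) \cap \{i_1,\ldots,i_k\} \quad \text{and} \quad \up(\y_k) \cap \{i_{k+1},\ldots,i_n\} = \down(\x) \cap \{i_{k+1},\ldots,i_n\}. \]
At $k = n$ this forces $\up(\y_n) = \down(\x)$ and hence $\y_n = \row^\gamma(\x)$ by~\Cref{eq:global_row}.

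\textbf{Main obstacle.} The delicate point is showing that each flip $\tog_{i_{k+1}}$ actually enacts the required exchange: one must check that $i_{k+1}$ appears as a down- or up-label of $\y_k$ in exactly the direction predicted by the invariant, and that flipping it neither disturbs the already-settled labels $\{i_1,\ldots,i_k\}$ nor spoils the pending labels. Both assertions should follow from a local analysis of how left modularity forces the canonical join/meet representations of $\y_k$ and $\y_{k+1}$ to differ by a single exchange across the cover labelled $i_{k+1}$; but aligning the Galois order, the inductive hypothesis, and the left modular structure is the technical core, and is the analogue of the Cameron--Fon-Der-Flaass argument underlying~\Cref{thm:dist}.
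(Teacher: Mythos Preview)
Your Step~2 contains a genuine error: trim lattices need \emph{not} be semidistributive. The paper exhibits an explicit counterexample (the lattice on the left of \Cref{fig:trim_semi}), so you cannot invoke canonical join and meet representations to recover $x$ from $\down(x)$ and $\up(x)$. The paper instead proves this determination directly from the left modular labelling (\Cref{pone}): if $i\in\down(x)$ then $j_i\leq x$, so $x\geq\bigvee_{i\in\down(x)}j_i$, while for each $y\lessdot x$ some $j_i\not\leq y$, forcing equality. Your further claim that $\{\down(x)\}=\{\up(x)\}$ because both coincide with the independent sets of the Galois graph is also unsupported: in the paper this characterisation (\Cref{cor:independent}) is a \emph{consequence} of the main theorem, established only after the descriptive property is in hand, so invoking it here would be circular.

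Your Step~3 takes a genuinely different route. The paper does not induct on the position $k$ in the linear extension; rather it inducts on the length of $\LL$ via the \emph{trim recurrence} $\LL=\LL_1\sqcup\LL^1$ where $\LL_1=[\hat 0,m_1]$ and $\LL^1=[j_1,\hat 1]$ (\Cref{lem:decomposition}). After the first flip at label $1$, one lands in one of these smaller trim intervals and stays there (the only edges leaving are labelled $1$), so the remaining flips compute rowmotion in a lattice of strictly smaller length. Three short cases, governed by \Cref{lem:down,lem:go_up_1}, handle the bookkeeping. This structural induction sidesteps exactly the obstacle you identify: you never need to argue that flipping at $i_{k+1}$ leaves the already-settled labels undisturbed, because the problem has been pushed wholesale into a smaller trim lattice. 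Your invariant-along-the-linear-extension idea may be salvageable, but as stated it is only a sketch, and the ``local analysis'' you defer to is precisely where left modularity must do real work---work the paper accomplishes instead through \Cref{lem:down} and \Cref{demi-semi}.
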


\subsection{Rowmotion on semidistributive lattices}
To generalize the scope of rowmotion futher, it is natural to search for other classes of posets with descriptive labellings.  In this direction, Barnard recently showed that semidistributive lattices have descriptive labellings using the bijection between join- and meet-irreducibles of a semidistributive lattice~\cite{freese1995free,day1979characterizations}, along with the natural labelling of the cover relations by these irreducibles~\cite{barnard2016canonical}.

As the join- and meet-irreducibles of a semidistributive lattice do not usually come endowed with a natural linear order, it is not clear how one should compute rowmotion in slow motion.  However, if the length of the longest chain (which is at most the number of join-irreducibles or the number of meet-irreducibles) should equal this extremal value, then the descriptive labelling of a maximal-length chain induces a linear order on the labels.  (Lattices for which these three values are equal are called \defn{extremal lattices}.) The following general theorem implies that flipping in this order agrees with rowmotion.

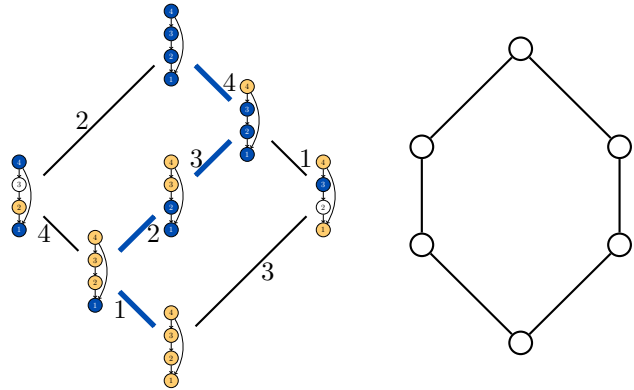
\begin{figure}[htbp]
\raisebox{-0.5\height}{\begin{tikzpicture}[scale=1]
\node (a) [align=center] at (0,0) {\scalebox{0.3}{\begin{tikzpicture}
\node (1) [circle,thick,draw,fill=meet] at (0,0) {1};
\node (2) [circle,thick,draw,fill=meet] at (0,1) {2};
\node (3) [circle,thick,draw,fill=meet] at (0,2) {3};
\node (4) [circle,thick,draw,fill=meet] at (0,3) {4};
\draw[->,thick] (2) to (1);
\draw[->,thick] (3) to (2);
\draw[->,thick] (4) to (3);
\draw[->,thick] (4) to [bend left]  (1);
\end{tikzpicture}}};
\node (b)  at (-1,1) {\scalebox{0.3}{\begin{tikzpicture}
\node (1) [circle,thick,draw,fill=join,text=white] at (0,0) {1};
\node (2) [circle,thick,draw,fill=meet] at (0,1) {2};
\node (3) [circle,thick,draw,fill=meet] at (0,2) {3};
\node (4) [circle,thick,draw,fill=meet] at (0,3) {4};
\draw[->,thick] (2) to (1);
\draw[->,thick] (3) to (2);
\draw[->,thick] (4) to (3);
\draw[->,thick] (4) to [bend left]  (1);
\end{tikzpicture}}};
\node (c) at (-2,2) {\scalebox{0.3}{\begin{tikzpicture}
\node (1) [circle,thick,draw,fill=join,text=white] at (0,0) {1};
\node (2) [circle,thick,draw,fill=meet] at (0,1) {2};
\node (3) [circle,thick,draw,fill=white] at (0,2) {3};
\node (4) [circle,thick,draw,fill=join,text=white] at (0,3) {4};
\draw[->,thick] (2) to (1);
\draw[->,thick] (3) to (2);
\draw[->,thick] (4) to (3);
\draw[->,thick] (4) to [bend left]  (1);
\end{tikzpicture}}};
\node (d) at (0,2) {\scalebox{0.3}{\begin{tikzpicture}
\node (1) [circle,thick,draw,fill=join,text=white] at (0,0) {1};
\node (2) [circle,thick,draw,fill=join,text=white] at (0,1) {2};
\node (3) [circle,thick,draw,fill=meet] at (0,2) {3};
\node (4) [circle,thick,draw,fill=meet] at (0,3) {4};
\draw[->,thick] (2) to (1);
\draw[->,thick] (3) to (2);
\draw[->,thick] (4) to (3);
\draw[->,thick] (4) to [bend left]  (1);
\end{tikzpicture}}};
\node (e)  at (2,2) {\scalebox{0.3}{\begin{tikzpicture}
\node (1) [circle,thick,draw,fill=meet] at (0,0) {1};
\node (2) [circle,thick,draw,fill=white] at (0,1) {2};
\node (3) [circle,thick,draw,fill=join,text=white] at (0,2) {3};
\node (4) [circle,thick,draw,fill=meet] at (0,3) {4};
\draw[->,thick] (2) to (1);
\draw[->,thick] (3) to (2);
\draw[->,thick] (4) to (3);
\draw[->,thick] (4) to [bend left]  (1);
\end{tikzpicture}}};
\node (f) at (1,3) {\scalebox{0.3}{\begin{tikzpicture}
\node (1) [circle,thick,draw,fill=join,text=white] at (0,0) {1};
\node (2) [circle,thick,draw,fill=join,text=white] at (0,1) {2};
\node (3) [circle,thick,draw,fill=join,text=white] at (0,2) {3};
\node (4) [circle,thick,draw,fill=meet] at (0,3) {4};
\draw[->,thick] (2) to (1);
\draw[->,thick] (3) to (2);
\draw[->,thick] (4) to (3);
\draw[->,thick] (4) to [bend left]  (1);
\end{tikzpicture}}};
\node (g) at (0,4) {\scalebox{0.3}{\begin{tikzpicture}
\node (1) [circle,thick,draw,fill=join,text=white] at (0,0) {1};
\node (2) [circle,thick,draw,fill=join,text=white] at (0,1) {2};
\node (3) [circle,thick,draw,fill=join,text=white] at (0,2) {3};
\node (4) [circle,thick,draw,fill=join,text=white] at (0,3) {4};
\draw[->,thick] (2) to (1);
\draw[->,thick] (3) to (2);
\draw[->,thick] (4) to (3);
\draw[->,thick] (4) to [bend left]  (1);
\end{tikzpicture}}};
\draw[-,thick] (a) to  node[midway, left] {$1$} (b) to  node[midway, left] {$4$} (c) to  node[midway, left] {$2$} (g);
\draw[-,thick] (a) to  node[midway, right] {$3$} (e) to  node[midway, right] {$1$} (f) to  node[midway, right] {$4$} (g);
\draw[-,thick] (b)  to node[midway, right] {$2$} (d) to node[midway, left] {$3$} (f);
\draw[-,thick,color=join,line width=2pt] (a) to (b) to  (d) to (f) to (g);
\end{tikzpicture}}\hspace{2em}
\raisebox{-0.5\height}{\begin{tikzpicture}[scale=1.3]
\node (a) [circle,thick,minimum size=3pt,inner sep=3pt,draw,fill=white] at (0,0) {};
\node (b) [circle,thick,minimum size=3pt,inner sep=3pt,draw,fill=white]  at (-1,1) {};
\node (c) [circle,thick,minimum size=3pt,inner sep=3pt,draw,fill=white] at (-1,2) {};
\node (d) [circle,thick,minimum size=3pt,inner sep=3pt,draw,fill=white]  at (0,3) {};
\node (e) [circle,thick,minimum size=3pt,inner sep=3pt,draw,fill=white]  at (1,1) {};
\node (f) [circle,thick,minimum size=3pt,inner sep=3pt,draw,fill=white]  at (1,2) {};
\draw[-,thick] (a) to (b) to (c) to (d);
\draw[-,thick] (a) to (e) to (f) to (d);
\end{tikzpicture}}
\caption{On the left is a trim lattice that is a minimal counterexample to semidistributivity~\cite{davey1975characterization}.  On the right is a semidistributive lattice that is neither left modular nor extremal; for this lattice, there is no linear order on the join-irreducibles that allows us to compute rowmotion in slow motion.}
\label{fig:trim_semi}
\end{figure}

\begin{theorem}
Extremal semidistributive lattices are trim.  (By~\Cref{thm:main_thm}, rowmotion can therefore be computed in slow motion.)
\label{thm:barn}
\end{theorem}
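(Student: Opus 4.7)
Since ``trim'' means extremal plus left modular and we are already given extremality as a hypothesis, the task reduces to exhibiting a maximal chain consisting of left modular elements in any extremal semidistributive lattice $L$. The natural candidate is a maximum-length chain $C: \hat{0} = x_0 \lessdot x_1 \lessdot \cdots \lessdot x_n = \hat{1}$, where $n = |J(L)| = |M(L)|$ is both the length of $L$ and the number of its join- (resp. meet-)irreducibles. The plan is to identify, along such a chain, the canonical Barnard-type labelling available on any semidistributive lattice with the Markowsky-type labelling available on any extremal lattice, and then to feed this identification through the semidistributive laws to force the left modular identity.

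Concretely, I would first label each cover relation $x \lessdot y$ of $L$ by the unique join-irreducible $j$ with $x \vee j = y$ and by its paired meet-irreducible $m = \kappa(j)$ satisfying $y \wedge m = x$; both exist and are well defined precisely because $L$ is semidistributive. Extremality then forces the $n$ labels along $C$ to be distinct and so to exhaust $J(L)$ as $j_1,\ldots,j_n$ and $M(L)$ as $m_1,\ldots,m_n$, from which a standard argument \`a la Markowsky yields the dual expansions $x_i = j_1 \vee \cdots \vee j_i = m_{i+1} \wedge \cdots \wedge m_n$. With both expansions in hand, left modularity of $x_i$ reduces to the inequality $(y \vee x_i) \wedge z \le y \vee (x_i \wedge z)$ for $y \le z$: expand $x_i$ as a join on the left side and as a meet on the right side, and apply join- and meet-semidistributivity iteratively to strip off irreducibles one at a time, using the uniqueness of the labels on $C$ to guarantee that each semidistributive identity collapses in exactly the required way.

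The main obstacle is precisely this last step. Since semidistributivity alone is far too weak to imply left modularity (the lattice on the right of~\Cref{fig:trim_semi} is an explicit semidistributive counterexample), the argument must genuinely use the extremal hypothesis, and the substantive content lies in showing that no ``stray'' join- or meet-irreducible can intervene when one rewrites $(y \vee x_i) \wedge z$ in terms of the expansions of $x_i$. The numerical coincidence $|J(L)| = |M(L)| = \mathrm{length}(L)$, together with Barnard's bijection $\kappa$, must be leveraged to force exactly the cancellations required. A secondary subtlety is verifying that the join- and meet-representations of $x_i$ above are genuinely compatible through $\kappa$, since this compatibility is what allows one to switch between them inside a single semidistributive computation.
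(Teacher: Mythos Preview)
Your proposal is an outline rather than a proof, and you correctly flag the gap yourself: the ``last step'' of deducing $(y\vee x_i)\wedge z \le y\vee(x_i\wedge z)$ by iteratively applying the semidistributive laws is not carried out, and there is no obvious mechanism for doing so. The semidistributive implications fire only when a specific equality of joins (or meets) is already in hand; expanding $x_i$ as $j_1\vee\cdots\vee j_i$ on one side and $m_{i+1}\wedge\cdots\wedge m_n$ on the other does not by itself produce such equalities for an arbitrary pair $y\le z$. Without a concrete scheme for which semidistributive identity is applied at each stage and why its hypothesis holds, this step is a restatement of the goal rather than an argument. As you note, the hexagon lattice shows that semidistributivity alone cannot do the work, so the missing ingredient is precisely where extremality enters---and that is what remains unexplained.

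The paper takes a different route that sidesteps any direct verification of the left modular identity. It shows that the semidistributive labelling $\gamma$ (which you set up correctly) is an \emph{interpolating EL-labelling}, and then invokes~\Cref{thm:interpolating} to conclude left modularity. The substantive step is the EL property: for any interval $[x,z]$, if $i$ is the smallest index with $j_i\le z$ and $j_i\not\le x$, then $x\lessdot x\vee j_i$. This is proved using the maximal-orthogonal-pair representation from extremality together with~\Cref{equiv} (the compatibility of $\gamma_\JJ$ and $\gamma_\MM$ through $\kappa$), by arguing that any hypothetical intermediate $x\lessdot w< x\vee j_i$ would force $\min(w_\JJ\setminus x_\JJ)>i\ge\max(x_\MM\setminus w_\MM)$, contradicting~\Cref{equiv}. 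The interpolating condition then follows from the self-dual nature of the labelling. So the paper's argument isolates exactly one cover-relation claim where extremality and semidistributivity interact, rather than attempting a global inequality.
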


Our results are complementary to Barnard's: there are trim lattices which are not semidistributive (such as the lattice on the left of \Cref{fig:trim_semi}), and there are also semidistributive lattices which are neither left modular nor extremal (such as the lattice on the right of \Cref{fig:trim_semi}).

\Cref{thm:barn} also has a useful application to the representation theory of finite-dimensional algebras.

\begin{corollary}
\label{cor:rep_are_trim}
Let $A$ be a finite-dimensional algebra over a field $k$.  If $A$ is representation finite and $\mod A$ has no cycles, then the torsion pairs of $A$, ordered with respect to inclusion of torsion classes (or reverse inclusion of torsion-free classes), form a trim lattice.
\end{corollary}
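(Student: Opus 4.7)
The plan is to apply \Cref{thm:barn}: I need only show that the lattice $\mathrm{tors}(A)$ of torsion classes of $A$, ordered by inclusion, is both semidistributive and extremal. That $\mathrm{tors}(A)$ is a finite semidistributive lattice whenever $A$ is representation finite (indeed, whenever $A$ is $\tau$-tilting finite) is by now standard, going back to Ingalls--Thomas and developed in the generality needed here by Demonet--Iyama--Reading--Reiten--Thomas; so the semidistributivity condition requires no new work.

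For extremality, I would identify the three relevant invariants of $\mathrm{tors}(A)$ with brick-theoretic data. The join-irreducibles of $\mathrm{tors}(A)$ are in bijection with the bricks of $A$, via $B \mapsto T(B)$ (the smallest torsion class containing the brick $B$), and dually the meet-irreducibles correspond to the same set of bricks via the associated torsion-free classes. Furthermore, the length of the longest chain in $\mathrm{tors}(A)$ equals the number of bricks: along any maximal chain, each cover relation is labelled by a distinct brick and every brick appears. So extremality reduces to showing that the number of bricks equals the number of indecomposables, which is precisely where the no-cycles hypothesis enters: a non-brick indecomposable $M$ admits a nonzero non-invertible endomorphism $\phi\colon M \to M$, and factoring $\phi$ through its image produces a cycle in $\mod A$. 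Under our hypotheses, therefore, all three invariants coincide with the number of indecomposables of $A$, and $\mathrm{tors}(A)$ is extremal. Theorem~\ref{thm:barn} then forces $\mathrm{tors}(A)$ to be trim.

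The main obstacle I anticipate is essentially bookkeeping rather than mathematics: one has to pin down and cite the precise statements in the representation-theoretic literature (bijection of join- and meet-irreducibles of $\mathrm{tors}(A)$ with bricks, the chain-length formula, semidistributivity for $\tau$-tilting finite algebras) and verify that the no-cycles assumption is genuinely strong enough to force every indecomposable to be a brick over a possibly non-algebraically-closed field $k$. A secondary subtle point is making sure the correspondence between join-irreducibles and meet-irreducibles is compatible with the labelling used in \Cref{thm:barn}, so that extremality-plus-semidistributivity really yields a trim lattice and not merely a lattice with equal counts of irreducibles. Once these pieces are in place, the corollary follows from \Cref{thm:barn} with no further argument.
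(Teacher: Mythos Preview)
Your overall strategy is exactly the paper's: show the lattice of torsion classes is both extremal and semidistributive, then invoke \Cref{thm:barn}. For semidistributivity the paper simply cites \cite[Theorem 4.5]{GM2015}, just as you propose to cite the literature.

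The only difference is in how extremality is established. The paper handles this earlier, in \Cref{thm:rep_are_extremal}, by observing directly that the no-cycles hypothesis lets one linearly order the indecomposables so that the ``$\Hom(M,N)\neq 0$'' relation becomes a Galois graph $G(A)$; torsion pairs are then precisely its maximal orthogonal pairs, and extremality is immediate from Markowsky's \Cref{thm:extreme_representation}. Your route via bricks is perfectly valid but slightly more roundabout: you must import the brick labelling of covers and the fact that some maximal chain realises every brick, whereas the Galois-graph argument gets the length-$n$ chain for free. Incidentally, your ``factor through the image'' step is unnecessary: with the paper's definition of cycle, a nonzero nonisomorphism $M\to M$ is already a cycle of length one, so ``no cycles'' forces every indecomposable to be a brick outright. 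Finally, your worry about compatibility of labellings is misplaced: \Cref{thm:barn} needs only the numerical equality $|\JJ|=|\MM|=\text{length}$, nothing about how the irreducibles are matched.
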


\subsection{Organization}
The remainder of the paper is organized as follows.  In~\Cref{sec:lattices}, we review distributive, extremal, and left modular lattices, recalling representation theorems using Galois graphs and labellings of cover relations.  We show that examples of extremal lattices arise naturally in the study of finite-dimensional algebras.  We turn to trim lattices in~\Cref{sec:trim}: we summarize their structural properties in~\Cref{sec:structure}, characterize their covers in terms of Galois graphs, and introduce a new recursive decomposition in~\Cref{sec:decomposition} that generalizes the Cambrian recurrence on Cambrian lattices.  In~\Cref{sec:global_row}, we prove~\Cref{thm:main_thm}.  In \Cref{sec:labelcomplex}, motivated by the canonical join complex of a semidistributive lattice, we define the independence complex of a trim lattice as the complex of downward labels.  We show in~\Cref{prop:ind_complex} that the independence complex is a flag simplicial complex, and in~\Cref{cor:independent} that it coincides with the simplicial complex of independent sets of the Galois graph.  In~\Cref{sec:semidistributive}, we recall Barnard's constructions on semidistributive lattices, prove~\Cref{thm:barn} and~\Cref{cor:rep_are_trim}, and use the independence complex to relate the canonical join complex of an extremal semidistributive lattice to its Galois graph.  Finally, in~\Cref{sec:examples}, we give a short account of the history of rowmotion and briefly summarize what is known about rowmotion on different families of interesting lattices.

\section{Lattices: Representations and Labellings}
\label{sec:lattices}
{\bf Note: all posets in this paper are assumed to be finite. }

In this section, we review three classes of lattices: distributive lattices---which serve as some of the simplest examples of the theory---as well as extremal  and left modular lattices.  Both extremal and left modular lattices generalize desirable properties of distributive lattices: extremal lattices have a representation theorem similar to the fundamental theorem of finite distributive lattices (and naturally arise in the representation theory of finite-dimensional algebras), while the cover relations of left modular lattices may be naturally labeled by join- or meet-ir\-reducibles.  The happy intersection of these two classes of lattices are the trim lattices, which we will review in~\Cref{sec:trim}.

\subsection{Lattices}
\label{ssec:lattices}
A \defn{lattice} $\LL$ is a nonempty partially ordered set with order relation written $\leq$, such that any two elements $\x,\y \in \LL$ have a \defn{join} (a least upper bound, written $\x \vee \y$) and a \defn{meet} (a greatest lower bound, written $\x \wedge \y$).  A \defn{chain} is a sequence of elements \[\x_0 < \x_1 < \cdots < \x_r,\] of \defn{length} $r$.  The length of $\LL$ is the maximum of the lengths of its chains.   A \defn{join-irreducible} element is an element $\x \in \LL$ such that $\y \vee z \neq \x$ for any $\y,z<\x$.  Similarly, if $\y \wedge z \neq \x$ for any $\y,z > \x$, we call $\x$ \defn{meet-irreducible}.   We write $\JJ$ for the set of all join-irreducible elements of a lattice $\LL$, and $\MM$ for the set of all of its meet-irreducible elements.  We denote the unique minimal element of $\LL$ by $\hat{0}$, and its unique maximal element by $\hat{1}$.  By convention $\hat 0$ does not count as join-irreducible, and $\hat 1$ does not count as meet-irreducible.

A \defn{lattice congruence} on a lattice $\LL$ is an equivalence relation $\equiv$ on $\LL$ such that whenever $x_1 \equiv x_2$ and $y_1 \equiv y_2$ then $x_1\wedge y_1 \equiv x_2\wedge y_2$ and $x_1\vee y_1 \equiv x_2\vee y_2$. The \defn{quotient} of $\LL$ with respect to $\equiv$ is the lattice defined on the congruence classes.

Fix a lattice $\LL$.  In general, computing meets and joins can be nontrivial, and so it is reasonable to ask for a ``representation'' of $\LL$, so that elements of $\LL$ are represented by objects for which joins and meets are easy to compute~\cite{birkhoff1948representations}.  For example, joins and meets in a Boolean lattice are easily computed when the Boolean lattice is represented as the set of subsets of a finite set, ordered by containment.

For $X \subseteq \JJ$ and for $Y \subseteq \MM$, we follow (the dual of)  Markowsky's \defn{poset of irreducibles}~\cite{markowsky1975factorization} to define\footnote{
Similar ideas appeared after~\cite{markowsky1975factorization} in the work of Rudolf Wille under the names ``concept lattices'' and ``contexts'', and have been used in the study of ``Formal Concept Analysis''~\cite{wille1982restructuring}.}
\begin{align*}
\MM(X) &= \{m \in \MM : j \leq m \text{ for all } j \in X\} \text{ and} \\ \JJ(Y)&= \{j \in \JJ : j \leq m \text{ for all } m \in Y\}.\end{align*}

\begin{theorem}[{\cite[Theorem 4]{markowsky1975factorization}}]
\label{thm:set_representation}
A finite lattice $\LL$ is isomorphic to the set of pairs \[\Big\{(X,Y) \subset \JJ \times \MM : \MM(X)=Y \text{ and } \JJ(Y)=X\Big\},\] under inclusion of the first component or under reverse inclusion of the second component.  Moreover, joins and meets in $\LL$ may be computed as \begin{align*}(X,Y) \vee (X',Y') &= (\JJ(Y \cap Y'), Y \cap Y') \text{ and} \\ (X,Y) \wedge (X',Y') &= (X \cap X', \MM(X \cap X')).\end{align*}
\end{theorem}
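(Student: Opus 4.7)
The plan is to exhibit a natural candidate bijection and check that it preserves the requisite structure. For $\ell \in \LL$, define
\[X(\ell) := \{j \in \JJ : j \leq \ell\} \quad\text{and}\quad Y(\ell) := \{m \in \MM : \ell \leq m\},\]
and consider the map $\Phi : \LL \to \JJ \times \MM$ sending $\ell \mapsto (X(\ell), Y(\ell))$. First I would record the fundamental input: in any finite lattice, every element satisfies $\ell = \bigvee X(\ell) = \bigwedge Y(\ell)$, since one may iteratively replace any non-(join-irreducible) term in an expression for $\ell$ by a join of strictly smaller elements, and the process terminates by finiteness.

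Next I would check that $\Phi$ lands in the claimed subset of pairs. For the equality $\MM(X(\ell)) = Y(\ell)$: an $m \in \MM$ lies above every $j \in X(\ell)$ iff it lies above $\bigvee X(\ell) = \ell$, i.e.\ iff $m \in Y(\ell)$; and $\JJ(Y(\ell)) = X(\ell)$ is exactly dual. Injectivity of $\Phi$ follows from $\ell = \bigvee X(\ell)$. For surjectivity, given a pair $(X,Y)$ with $\MM(X) = Y$ and $\JJ(Y) = X$, set $\ell := \bigvee X$; then $X \subseteq X(\ell)$ trivially, and every $m \in Y = \MM(X)$ satisfies $\ell \leq m$, giving $Y \subseteq Y(\ell)$. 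The reverse inclusions come by applying the closure conditions on $(X,Y)$: if $\ell \leq m$ then $m$ lies above every $j \in X$, so $m \in \MM(X) = Y$; hence $Y(\ell) = Y$, and then $X(\ell) = \JJ(Y(\ell)) = \JJ(Y) = X$.

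Order-preservation and order-reflection follow from $\ell \leq \ell'$ iff $X(\ell) \subseteq X(\ell')$ (using that both sides determine their element as a join). That inclusion of first components corresponds to reverse inclusion of second components is then automatic from the bijection between the two indexings. For the meet formula, $j \leq \ell \wedge \ell'$ iff $j \leq \ell$ and $j \leq \ell'$, so $X(\ell \wedge \ell') = X(\ell) \cap X(\ell') = X \cap X'$; the second coordinate is then forced to be $\MM(X \cap X')$ by the defining closure condition. The join formula is dual: $Y(\ell \vee \ell') = Y(\ell) \cap Y(\ell') = Y \cap Y'$, and then $X = \JJ(Y \cap Y')$.

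The only step that requires real care is surjectivity, where one must be certain that the closure conditions $\MM(X) = Y$ and $\JJ(Y) = X$ imply that the naive preimage $\ell := \bigvee X$ recovers $X$ exactly (and not a strictly larger set of join-irreducibles below $\ell$). The argument above routes this through the identity $X(\ell) = \JJ(Y(\ell))$, which is itself the dual of the already-verified $Y(\ell) = \MM(X(\ell))$. Everything else is either a direct consequence of the definitions or of the basic fact that finite lattice elements are joins of their join-irreducibles.
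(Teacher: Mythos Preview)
Your proof is correct. The paper does not actually supply a proof of this statement---it is quoted from \cite{markowsky1975factorization} without argument---so there is no in-paper proof to compare against. Your approach via the Galois connection $\ell \mapsto (X(\ell),Y(\ell))$ is the standard one and exactly what Markowsky does; the only delicate point is surjectivity, and your routing through $Y(\ell)=Y$ first and then $X(\ell)=\JJ(Y(\ell))=\JJ(Y)=X$ handles it cleanly.
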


\subsection{Distributive lattices}
\label{sec:distributive}
The \defn{distributive lattices} are a particularly simple class of lattice that have an elegant representation theorem: Birkhoff's \defn{fundamental
theorem of finite distributive lattices}.

\begin{theorem}
The lattice $J(\Q)$ of order ideals of a finite poset $\Q$ (ordered by containment; meet and join are given by union and intersection) is a distributive lattice, and every finite distributive
lattice arises in this way.
\label{thm:birkhoff}
\end{theorem}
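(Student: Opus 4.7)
The plan is to prove both directions separately and leverage the representation via join-irreducibles as in \Cref{thm:set_representation}.

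First I would verify that $J(\Q)$ is always a distributive lattice. The set of order ideals of $\Q$ is closed under union and intersection, so $J(\Q)$ is a sublattice of the Boolean lattice $2^\Q$; distributivity is then inherited from the Boolean lattice, where set union distributes over set intersection.

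For the converse, let $\LL$ be any finite distributive lattice and let $\JJ$ denote its poset of join-irreducibles (with the order inherited from $\LL$). Define
\[
\phi : \LL \to J(\JJ), \qquad \phi(x) = \{ j \in \JJ : j \leq x \}.
\]
The image $\phi(x)$ is clearly an order ideal of $\JJ$, so $\phi$ is well-defined, and it is obviously order-preserving. I would establish the following three facts. First, every $x \in \LL$ satisfies $x = \bigvee \phi(x)$; this holds in any finite lattice by a straightforward induction on the length of a longest chain below $x$, handling the case where $x$ is not itself join-irreducible by writing it as a join of strictly smaller elements. This immediately yields injectivity of $\phi$. Second, $\phi$ preserves meets and joins: preservation of meets is automatic from $\phi$ being order-preserving and sending meets to intersections (one inclusion is clear, the other uses $j \le x \wedge y \iff j \le x$ and $j \le y$), and likewise joins go to unions once we verify surjectivity.

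The crucial point, and the main obstacle, is showing that $\phi$ is surjective, which is exactly the place where distributivity is used. Given an order ideal $I \in J(\JJ)$, set $x := \bigvee I \in \LL$; I must show $\phi(x) = I$. The inclusion $I \subseteq \phi(x)$ is trivial. For the reverse, suppose $j \in \JJ$ with $j \leq x = \bigvee_{i \in I} i$. Then by distributivity
\[
j = j \wedge \bigvee_{i \in I} i = \bigvee_{i \in I} (j \wedge i).
\]
Because $j$ is join-irreducible, some term $j \wedge i$ must equal $j$, i.e.\ $j \leq i$ for some $i \in I$. Since $I$ is an order ideal of $\JJ$, this forces $j \in I$, proving $\phi(x) \subseteq I$. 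The hard part is really just being careful that ``join-irreducible'' in a finite lattice coincides with the ``prime'' property $j \leq a \vee b \Rightarrow j \leq a \text{ or } j \leq b$ precisely under distributivity; the above computation packages this argument in the form needed.

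Combining the steps, $\phi : \LL \to J(\JJ)$ is an order-preserving bijection that preserves meets and joins, hence a lattice isomorphism. Taking $\Q := \JJ$ completes the proof that every finite distributive lattice is of the form $J(\Q)$.
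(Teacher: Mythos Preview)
Your argument is the standard textbook proof of Birkhoff's theorem and is correct. One minor quibble: you mention that ``joins go to unions once we verify surjectivity,'' but you never actually spell this out; the cleanest way to finish is simply to note that once $\phi$ is an order-isomorphism (a bijection that is order-preserving with order-preserving inverse), it is automatically a lattice isomorphism, so no separate verification of join- and meet-preservation is needed.

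As for comparison with the paper: the paper does not give a proof of this theorem. It is quoted as the classical fundamental theorem of finite distributive lattices, and the only gesture toward an argument is the remark following the statement that, after identifying $\JJ$ and $\MM$ via the labelling $\gamma$, a specialization of \Cref{thm:set_representation} recovers Birkhoff's representation theorem. That remark is not a self-contained proof (in particular, it presupposes the bijection between join- and meet-irreducibles and their good behaviour, which is essentially equivalent to what has to be shown). Your direct argument via join-primality under distributivity is the intended content behind any such specialization, so there is no meaningful discrepancy to discuss.
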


An example of~\Cref{thm:birkhoff} is given in~\Cref{fig:dist_lattice}.  Writing $j_q$ for the join-irreducible in $J(\Q)$ whose unique maximal element is $q$, we label a cover relation $\x \lessdot \y$ in $J(\Q)$ by \[\gamma_\JJ(\x \lessdot \y) := q  \text{ if } \y = \x \vee j_q.\]   Dually, writing $m_q$ for the meet-irreducible whose complement in $\Q$ has $q$ as its unique minimal element, we may also label a cover relation $\x \lessdot \y$ in $J(\Q)$ by \[\gamma_\MM(\x \lessdot \y) := q  \text{ if } \x = \y \wedge m_q.\]   It is clear that these two labellings agree, so that we may define a consistent labelling of the cover relations in $J(\Q)$ \[\gamma:= \gamma_\JJ = \gamma_\MM.\] 

Identifying $\JJ$ and $\MM$ using the labelling $\gamma$,  a specialization of~\Cref{thm:set_representation} recovers Birkhoff's representation theorem. 

\subsection{Extremal lattices}
\label{sec:extremal}

Note that a lattice of length $n$ must have at least $n$ join-irreducible elements and at least $n$ meet-irreducible elements.  Finite distributive lattices are characterized among finite lattices by the property that the length of \emph{any} maximal chain is equal to both the number of join-irreducible elements and to the number of meet-irreducibles.  Markowsky modified this characterization to define extremal lattices by dropping the condition that all maximal chains have equal length.

\begin{definition}[\cite{markowsky1992primes}]
 A lattice of length $n$ with $|\JJ|=|\MM|=n$ is called \defn{extremal}.
\label{def:extremal}
\end{definition}

To state Markowsky's \defn{representation theorem for extremal lattices}, we fix an extremal lattice $\LL$ and a maximal-length chain
\[\hat 0 = x_0 \lessdot x_1 \lessdot \dots \lessdot x_n=\hat 1.\]  We use this chain to index the join-irreducible elements $j_1,j_2,\ldots,j_n$ and the meet-irreducible elements $m_1,m_2,\ldots, m_n$, so that
\[x_i=j_1\vee \dots \vee j_i=m_{i+1}\wedge \dots \wedge m_n.\]

  As with distributive lattices, for extremal lattices we may identify elements of $\JJ$ and elements of $\MM$ with their index induced by the maximal-length chain.  This leads to the following specialization of~\Cref{thm:set_representation}.

  Define the \defn{Galois graph}\footnote{We refer to $G(\LL)$ as the Galois graph because we can view it as defining a Galois connection between subsets of join-irreducibles and subsets of meet-irreducibles, as in \cite{markowsky1975factorization,DP}.} of $\LL$ to be the graph $G(\LL)$ whose vertex set is $[n]:=\{1,2,\ldots,n\}$, with a directed edge $i\rightarrow k$ when $j_i \not\leq m_k$ and $i\neq k$.  The ordering of the join-irreducibles and meet-irreducibles imply that $i \rightarrow k$ only if $i> k$.  The \defn{Galois poset} of $\LL$ is the poset $P(\LL)$ defined by the transitive closure of $G(\LL)$.

Conversely, any directed graph $G$ on $[n]$ with no multiple edges satisfying $i \rightarrow k$ only if $i> k$ gives rise to an extremal lattice $\LL(G)$, as we now explain.  For $X,Y \subseteq [n]$ with $X \cap Y = \emptyset$, we say $(X,Y)$ is an \defn{orthogonal pair} if there is no arrow from any $i\in X$ to any $k\in Y$, and we say it is a \defn{maximal orthogonal pair} if $X$ and $Y$ are maximal with that property.  Clearly, to each $Y\subseteq [n]$, there is at most one $X$ such that $(X,Y)$ is a maximal orthogonal pair (and conversely).  Then the extremal lattice associated to $G$ is equivalently given by \emph{either} of
\begin{align*}
(X,Y) \leq (X',Y') &\text{ iff } X \subseteq X', \text{ or} \\
(X,Y) \leq (X',Y') &\text{ iff } Y' \subseteq Y.
\end{align*}
Furthermore, the join is computed by intersecting the second terms, while meet is given by the intersection of the first terms.

If $\x$ is an element of an extremal lattice $\LL$ with corresponding maximal orthogonal pair $(X,Y)$, we write $x_\JJ=X$ and $x_\MM=Y$.  (Given an extremal lattice, $x_\JJ$ indexes the join-irreducible elements below $x$, while $x_\MM$ indexes the meet-irreducible elements above $x$.)

We summarize our discussion of Markowsky's representation theorem for finite extremal lattices with the following theorem, which is illustrated in~\Cref{fig:dist_lattice,fig:camb_lattice,fig:trim_semi,fig:extremal_no_global,fig:tamari_extremal}.

\begin{theorem}[\cite{markowsky1992primes}]
Every finite extremal lattice $\LL$ is isomorphic to the lattice of  maximal orthogonal pairs of its Galois graph: $\LL \simeq \LL(G(\LL))$.  Conversely, given any directed graph $G$ on $[n]$ with no multiple edges and satisfying $i \rightarrow k$ only if $i> k$, the lattice $\LL(G)$ of maximal orthogonal pairs of $G$ is extremal.
\label{thm:extreme_representation}
\end{theorem}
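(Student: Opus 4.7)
My plan is to obtain \Cref{thm:extreme_representation} as a specialization of Markowsky's general poset-of-irreducibles representation (\Cref{thm:set_representation}), using the fixed maximal-length chain to convert abstract pairs $(X,Y)\subseteq \JJ\times\MM$ into combinatorial pairs of subsets of $[n]$.

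The first step is to set up the indexing. Given a maximal chain $\hat 0 = x_0 \lessdot x_1 \lessdot \cdots \lessdot x_n = \hat 1$, for each $i$ I would show that the cover $x_{i-1}\lessdot x_i$ canonically picks out a join-irreducible $j_i$ characterized by $j_i\leq x_i$ and $j_i\not\leq x_{i-1}$ (so $x_i = x_{i-1}\vee j_i$), and dually a meet-irreducible $m_i$ with $m_i\geq x_{i-1}$ and $m_i\not\geq x_i$. The assignment $i\mapsto j_i$ is injective because the $j_i$ are pairwise distinct (they have different ``first appearance''); by extremality $|\JJ|=n$, so it is a bijection, and likewise for $i\mapsto m_i$. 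Writing $x_i = j_1\vee\cdots\vee j_i$ and $x_i = m_{i+1}\wedge\cdots\wedge m_n$ then follows by induction along the chain.

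The second step is the key incidence calculation. I would check that $j_i\leq m_k$ whenever $i<k$: indeed $j_i\leq x_i\leq x_{k-1}\leq m_k$. Conversely, $j_i\not\leq m_i$, for otherwise $x_i=x_{i-1}\vee j_i\leq m_i$, contradicting $m_i\not\geq x_i$. This proves that the Galois graph $G(\LL)$ on $[n]$ has $i\to k$ only when $i>k$ (plus the off-diagonal exclusion built into the definition). Now, given an element $x\in\LL$, set $x_\JJ=\{i:j_i\leq x\}$ and $x_\MM=\{k:x\leq m_k\}$. An arrow $i\to k$ in $G(\LL)$ witnesses $j_i\not\leq m_k$, so it cannot happen that $i\in x_\JJ$ and $k\in x_\MM$; that is, $(x_\JJ,x_\MM)$ is an orthogonal pair of $G(\LL)$. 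Invoking \Cref{thm:set_representation} I would identify these with Markowsky's pairs $(X,Y)$ with $\MM(X)=Y$ and $\JJ(Y)=X$, and check that the maximality conditions match: the translation $\MM(X)=\{m_k : j_i\leq m_k \text{ for all } i\in X\}$ says exactly that $Y$ is the largest set with no arrow from $X$ to $Y$, and symmetrically for $\JJ(Y)=X$. This identifies $\LL$ with $\LL(G(\LL))$ as posets, and the join/meet formulas translate directly from \Cref{thm:set_representation} into the stated intersection rules on maximal orthogonal pairs.

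For the converse, given an arbitrary graph $G$ on $[n]$ with $i\to k$ only if $i>k$, I would verify directly that the poset of maximal orthogonal pairs $\LL(G)$ is an extremal lattice of length $n$. The lattice structure follows from the two equivalent definitions of the order via $X$-inclusion and reverse $Y$-inclusion, which guarantee the stated meet and join (one checks that intersecting the first components of two maximal orthogonal pairs yields a set that extends uniquely to a maximal orthogonal pair, and dually). The singletons $(X,Y)$ with $X=\{i\}$ correspond to the join-irreducibles and those with $Y=\{k\}$ to the meet-irreducibles, giving $n$ of each. Finally, the chain obtained by letting $X_i=\{1,\ldots,i\}$ (taking $Y_i$ maximal) has length $n$, witnessing extremality, and composing with the forward direction recovers $G$.

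The main obstacle, I expect, is the careful bookkeeping in the second step: showing that the two notions of ``maximal pair''—Markowsky's $\MM(X)=Y,\ \JJ(Y)=X$ and the Galois-graph maximality of orthogonal pairs—really coincide under the indexing. Once that translation is nailed down, everything else is a direct consequence of \Cref{thm:set_representation}.
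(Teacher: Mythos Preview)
The paper does not actually prove \Cref{thm:extreme_representation}; it is quoted from \cite{markowsky1992primes}, and the preceding paragraphs only set up the indexing of $j_i,m_i$ and the definition of the Galois graph before stating the theorem as a summary. Your approach---deducing the forward direction from \Cref{thm:set_representation} by transporting Markowsky's pairs $(X,Y)$ along the bijections $i\mapsto j_i$ and $i\mapsto m_i$---is exactly the intended specialization, and your incidence computations ($j_i\leq m_k$ for $i<k$, $j_i\not\leq m_i$) are correct and are what force arrows to point downward and force $x_\JJ\cap x_\MM=\emptyset$.

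One inaccuracy in your converse: the join-irreducibles of $\LL(G)$ are \emph{not} the pairs with $X=\{i\}$. In general $(\{i\},Y)$ need not be a maximal orthogonal pair, since $X$ may fail to be maximal against the resulting $Y$. The correct statement is that for each $i\in[n]$ the element $J_i:=\bigwedge\{(X,Y)\in\LL(G):i\in X\}$ is well-defined (meets are computed by intersecting first components, and $i$ survives), every element $(X,Y)$ equals $\bigvee_{i\in X}J_i$, and hence the join-irreducibles are among the $J_i$. This gives $|\JJ|\leq n$; combined with your length-$n$ chain $(\{1,\dots,i\},\{i+1,\dots,n\})_{i=0}^n$ and the general inequality $|\JJ|\geq\operatorname{length}(\LL)$, you get $|\JJ|=n=\operatorname{length}(\LL(G))$, and dually for $\MM$. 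With this fix the converse goes through.
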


\begin{example}
We may recover the underlying poset $\Q$ of  a distributive lattice $J(\Q)$ from the Galois graph $G(J(\Q))$ in the following way. Identifying elements of $\Q$ with the labels of the corresponding join- and meet-irreducible elements, the Galois graph of $J(\Q)$ has an edge for each strict relation in $\Q$.  Conversely, if the arrows of a Galois graph define a transitive relation, then the resulting extremal lattice is a distributive lattice.

For distributive lattices, maximal orthogonal pairs $(X,Y)$ have the property that $X \sqcup Y = [n]$: $X$ is an order ideal, and $Y$ is the complementary order filter.  See~\Cref{fig:dist_lattice}.
\label{ex:spine}
\end{example}

The \defn{spine} of an extremal lattice consists of those elements which lie on some chain of maximal length.  Following~\cite[Lemma 6]{thomas2006analogue}, we have the following characterization of the spine, which extends~\Cref{ex:spine} to any extremal lattice.

\begin{proposition}
\label{thm:spinea}
The spine of an extremal lattice $\LL$ is a distributive sublattice of $\LL$.  It is isomorphic to $J(P(\LL))$, the lattice of order ideals of the Galois poset.
\end{proposition}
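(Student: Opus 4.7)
The plan is to identify the spine with $J(P(\LL))$ via the correspondence sending an element $x \in \LL$ with maximal orthogonal pair $(x_\JJ, x_\MM)$ to the set $x_\JJ$. Concretely, I will show that $x$ lies on some chain of maximal length in $\LL$ if and only if $x_\JJ \sqcup x_\MM = [n]$, and then that the subsets of $[n]$ arising this way are exactly the order ideals of $P(\LL)$.

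For the first equivalence, suppose $x$ lies on a maximal chain $\hat 0 = y_0 \lessdot \cdots \lessdot y_n$ with $x = y_i$. Applying the Markowsky labeling relative to this chain, the join-irreducibles below $x$ are precisely the first $i$ along the chain, and the meet-irreducibles above $x$ are precisely the last $n - i$; since these sets have sizes $i$ and $n - i$ and are disjoint subsets of $[n]$, we get $x_\JJ \sqcup x_\MM = [n]$. Conversely, given a maximal orthogonal pair $(X, [n] \setminus X)$, pick any linear extension of $P(\LL)$ whose first $|X|$ elements are exactly $X$. Each prefix is an order ideal of $P(\LL)$ (as I establish below), yielding an ascending sequence of maximal orthogonal pairs and hence a chain of length $n$ in $\LL$ from $\hat 0$ to $\hat 1$ passing through $x$. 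Since $\LL$ has length $n$, this chain is automatically a chain of covers.

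For the second equivalence, unwind the definition: a pair $(X, [n] \setminus X)$ is orthogonal precisely when no arrow of $G(\LL)$ goes from $X$ to $[n] \setminus X$, i.e., $X$ is closed under following arrows in $G(\LL)$. Taking transitive closure, this is exactly the condition that $X$ is an order ideal of $P(\LL)$. Such a pair is automatically maximal, since $X \sqcup ([n] \setminus X) = [n]$ leaves no room to enlarge either component.

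Finally, I verify that the bijection $X \mapsto (X, [n] \setminus X)$ is a lattice isomorphism from $J(P(\LL))$ onto the spine. Using the description of meet and join in terms of orthogonal pairs from~\Cref{thm:extreme_representation}, the meet of two spine elements has first component $X \cap X'$, which is again an order ideal of $P(\LL)$; by the second equivalence this meet equals the spine element $(X \cap X', [n] \setminus (X \cap X'))$. The computation for joins is dual, using $Y \cap Y' = [n] \setminus (X \cup X')$ and the fact that unions of order ideals are order ideals. Hence the spine is closed under meet and join in $\LL$ and is isomorphic to the distributive lattice $J(P(\LL))$. The main subtlety is the reverse half of the spine characterization: the key trick is that a chain of length equal to the length of the ambient lattice admits no refinement, so extending an order ideal $X$ to a linear extension of the entire Galois poset automatically produces a chain of covers through $x$.
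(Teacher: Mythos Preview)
Your proof is correct and follows essentially the same approach as the paper: identify the spine with those maximal orthogonal pairs $(X,Y)$ satisfying $X \cup Y = [n]$, observe that this condition is equivalent to $X$ being an order ideal of $P(\LL)$, and check closure under meet and join. The paper's proof is terser---it asserts the spine characterization as ``clear'' without the counting argument or the linear-extension construction you give---so your write-up fills in details the paper leaves implicit, but the underlying strategy is the same.
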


\begin{proof}
We represent the elements of $\LL$ as maximal orthogonal pairs of $G(\LL)$ using~\Cref{thm:extreme_representation}.  It is clear that a maximal orthogonal pair $(X,Y)$ is in the spine if and only if $X \cup Y = [n]$.  Furthermore, $X \cup Y = [n]$ is equivalent to $X$ being an order ideal in $P(\LL)$.

By definition, $\left((X_1,Y_1) \wedge (X_2,Y_2)\right)_\JJ = X_1 \cap X_2$.  If $(X_1,Y_1)$ and $(X_2,Y_2)$ are in the spine, then $X_1$ and $X_2$ are order ideals in $\Po$, $X_1 \cap X_2$ is also an order ideal, and so $(X_1,Y_1) \wedge (X_2,Y_2)$ is still an element of the spine.  The dual argument shows that the spine is also closed under join.
\end{proof}

\begin{proposition}
\label{cor:galois_unique}
Any two maximal length chains of an extremal lattice $\LL$ induce the same correspondence between $\JJ$ and $\MM$.  A Galois graph $G(\LL)$ is unique up to renumbering; any renumbering is necessarily order-preserving with respect to $P(\LL)$.
\end{proposition}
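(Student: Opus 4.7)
The plan is to show the pairing $\JJ \leftrightarrow \MM$ induced by a maximal chain is intrinsic via the representation theorem~\Cref{thm:extreme_representation}, and then deduce the Galois graph's uniqueness and the order-preservation property of the renumbering.

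First, for a maximal chain $C\colon \hat 0 = x_0 \lessdot \cdots \lessdot x_n = \hat 1$, I would verify that the map $\JJ \to [n]$ sending $j$ to $\min\{i : j \le x_i\}$ is a bijection by counting: surjectivity uses that every cover $x_{i-1} \lessdot x_i$ is witnessed by at least one join-irreducible below $x_i$ but not $x_{i-1}$ (since $x_i$ is a join of join-irreducibles), and bijectivity follows from $|\JJ| = n$. The inverse gives $j_i$, the unique join-irreducible with $j_i \le x_i$ and $j_i \not\le x_{i-1}$; dually for $m_i$. The chain-induced pairing $\psi_C(j_i) = m_i$ associates $j_i$ and $m_i$ to the same cover.

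Next, I would show $\psi_C$ is chain-independent. The canonical lattice isomorphism $\varphi_C \colon \LL \to \LL(G_C(\LL))$ of~\Cref{thm:extreme_representation} identifies $j_i^C$ with $\hat j_i$ -- the minimum maximal orthogonal pair containing $i$ in its $X$-coordinate -- and $m_i^C$ with $\hat m_i$, the maximum containing $i$ in its $Y$-coordinate. Crucially, a single vertex label $i$ of $G_C$ simultaneously tags both $\hat j_i$ and $\hat m_i$. For a second chain $C'$, the composite $\varphi_{C'} \circ \varphi_C^{-1}\colon \LL(G_C) \to \LL(G_{C'})$ is a lattice isomorphism, which restricts to a graph isomorphism $\sigma\colon G_C \to G_{C'}$; because each vertex label tags both a join- and a meet-irreducible, the same permutation $\sigma$ acts in lockstep on the $C$- and $C'$-labelings of $\JJ$ and $\MM$, and hence $\psi_C(j) = \psi_{C'}(j)$ for all $j \in \JJ$.

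With the pairing intrinsic, the remaining claims follow immediately. The edges of $G(\LL)$ are determined by the intrinsic relation $j_i \not\le m_k$ together with the intrinsic pairing, so the graph is unique up to the renumbering $\sigma$. Since any chain-induced labeling satisfies $i \to k \Rightarrow i > k$ by construction, each such labeling is a linear extension of $P(\LL)$, and the renumbering $\sigma$ between two chain-induced labelings must be an order-isomorphism of $P(\LL)$, in particular order-preserving. The main obstacle in the above argument is the ``lockstep'' step: justifying that the single permutation $\sigma$ acts identically on the $\JJ$- and $\MM$-labelings rather than via two a priori independent permutations. I would resolve this by noting that in $\LL(G)$ both $\hat j_i$ and $\hat m_i$ are extracted from the same vertex $i$ via its $X$- and $Y$-roles respectively, so any graph isomorphism moves them together.
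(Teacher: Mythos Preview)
Your approach is more abstract than the paper's, which reduces to adjacent chain swaps via \Cref{thm:spinea} (the spine is distributive) and verifies directly that swapping positions $k$ and $k{+}1$ swaps both $j_k\leftrightarrow j_{k+1}$ and $m_k\leftrightarrow m_{k+1}$, preserving the pairing.  Your idea of invoking \Cref{thm:extreme_representation} is reasonable, but the argument has a genuine gap at exactly the point you flag.

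The composite $\Phi=\varphi_{C'}\circ\varphi_C^{-1}$ is a lattice isomorphism, so it does induce a bijection $\sigma_\JJ$ on $[n]$ via join-irreducibles and a bijection $\sigma_\MM$ on $[n]$ via meet-irreducibles.  The assertion $\sigma_\JJ=\sigma_\MM$ is \emph{precisely} the statement that the two chains induce the same correspondence $\JJ\leftrightarrow\MM$, i.e.\ the first sentence of the proposition.  Your resolution (``any graph isomorphism moves $\hat j_i$ and $\hat m_i$ together'') goes in the wrong direction: it says that a graph isomorphism induces a lattice isomorphism with $\sigma_\JJ=\sigma_\MM$, but you need to show that your particular lattice isomorphism $\Phi$ arises from a graph isomorphism in the first place.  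Nothing you have written rules out, a priori, that $\Phi$ permutes the join-irreducible labels by one permutation and the meet-irreducible labels by a different one.

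Concretely, what you have established is that $j_i\not\le m_k$ (in the $C$-labelling) iff $j_{\sigma_\JJ(i)}\not\le m_{\sigma_\MM(k)}$ (in the $C'$-labelling).  Since both relations give lower-unitriangular $\{0,1\}$-matrices, you are asking whether $P_{\sigma_\JJ} M P_{\sigma_\MM}^{-1}$ lower-unitriangular forces $\sigma_\JJ=\sigma_\MM$.  This is not obvious and requires an argument that actually uses the specific structure of $M$ (or equivalently of $\LL$) --- for instance, exploiting that both labellings come from maximal chains of the \emph{same} lattice, which is exactly what the paper's elementary chain-swap argument does.
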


\begin{proof}
Let $\LL$ be an extremal lattice.  We want to show that choosing different
chains of maximal length for $\LL$ result in Galois graphs which differ only by
a relabelling.

The chains of maximal length for $\LL$ are contained in the spine of
$\LL$, which is a distributive lattice by \Cref{thm:spinea}.  We may assume
that we have two chains of maximal length in $\LL$ which differ in only one
place, since any two chains
of maximal length in a distributive lattice differ by a sequence of such
differences.  Let $(a_i)_{i=0}^n$ and $(b_i)_{i=0}^n$ be the two chains,
and suppose they differ in position $k$.  Let $j_1,\dots,j_n$ be the numbering
of join-irreducibles induced by the chain $a$, and let $j_1',\dots,j_n'$ be
the numbering of join-irreducibles induced by the chain $b$; and similarly for
meet-irreducibles.  Clearly $j'_i=j_i$ for $i\ne k,k+1$.  Note further that
$j'_k \ne j_k$, so it must be the case that $j'_k=j_{k+1}$ and
$j'_{k+1}=j_k$.  The same argument applies to meet-irreducibles.
Thus, the two chains induce the same correspondence
between join-irreducibles and meet-irreducibles.
Let $G(\LL)$ and $G'(\LL)$ be the Galois graphs associated to chains $a$ and
$b$ respectively.  By what we have just shown, $G(\LL)$ and $G'(\LL)$ differ
only by swapping the labels $k$ and $k+1$.  Since all arrows in
$G(\LL)$ go from larger-numbered to smaller-numbered labels, and the same
thing is true after swapping $k$ and $k+1$, it follows that swapping these
two labels preserves the poset $P(\LL)$.
\end{proof}

We also have the following result, which says that the property of being an extremal lattice is
preserved under lattice quotients as defined in~\Cref{ssec:lattices}.
\begin{lemma}\label{lem:ext-quot}
Extremal lattices are preserved under lattice quotients.
\end{lemma}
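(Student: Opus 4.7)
The plan is to show that every quotient $\LL' := \LL/{\equiv}$ of an extremal lattice $\LL$ of length $n$ again satisfies $\mathrm{length}(\LL') = |\JJ(\LL')| = |\MM(\LL')|$, and is therefore extremal by~\Cref{def:extremal}. I would combine the general inequalities $\mathrm{length}(\LL') \leq |\JJ(\LL')|$ and $\mathrm{length}(\LL') \leq |\MM(\LL')|$---valid for any finite lattice, since the covers in a chain of length $k$ are witnessed by $k$ distinct join- (respectively, meet-) irreducibles---with a matching lower bound obtained by projecting a maximal-length chain of $\LL$ into $\LL'$.

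The key preparations are two facts about an arbitrary lattice congruence $\equiv$ on a finite lattice. First, for every $J' \in \JJ(\LL')$, the minimum $b$ of the congruence class $J'$ is itself a join-irreducible of $\LL$ satisfying $b \not\equiv b_*$: if $b = y \vee z$ with $y,z < b$ in $\LL$, then minimality of $b$ in its class forces $[y]_\equiv, [z]_\equiv < J'$, so $[y]_\equiv \vee [z]_\equiv = J'$ contradicts the join-irreducibility of $J'$. This produces an injection $\JJ(\LL') \hookrightarrow \{j \in \JJ(\LL) : j \not\equiv j_*\}$, where $j_*$ denotes the unique element of $\LL$ covered by the join-irreducible $j$. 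Second, fix a maximal-length chain $\hat 0 = x_0 \lessdot x_1 \lessdot \cdots \lessdot x_n = \hat 1$ with Markowsky's indexing $j_1,\dots,j_n$ of $\JJ(\LL)$, so $j_i \leq x_i$ but $j_i \not\leq x_{i-1}$: if $x_{i-1} \equiv x_i$, the congruence property yields $j_i \wedge x_{i-1} \equiv j_i \wedge x_i = j_i$, and since $j_i \not\leq x_{i-1}$ forces $j_i \wedge x_{i-1} \leq (j_i)_*$, convexity of congruence classes gives $(j_i)_* \equiv j_i$.

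Projecting the chain $(x_i)$ produces a chain in $\LL'$ of length $n - |S|$, where $S = \{i : [x_{i-1}]_\equiv = [x_i]_\equiv\}$. The second fact embeds $S$ into $\{i : j_i \equiv (j_i)_*\}$, and the first gives $|\JJ(\LL')| \leq n - |\{i : j_i \equiv (j_i)_*\}|$. Concatenating,
\[\mathrm{length}(\LL') \;\geq\; n - |S| \;\geq\; |\JJ(\LL')| \;\geq\; \mathrm{length}(\LL'),\]
so all three quantities agree. The dual argument on meet-irreducibles---mapping each $M' \in \MM(\LL')$ to the maximum of its congruence class, and replacing $j_i \wedge x_{i-1}$ by $m_i \vee x_i$ throughout---gives $\mathrm{length}(\LL') = |\MM(\LL')|$ as well, completing the proof.

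I expect the main obstacle to be the verification that the minimum of a join-irreducible class of $\LL/{\equiv}$ is itself join-irreducible in $\LL$; once this is in hand, the argument reduces to matching inequalities. The remaining ingredients are routine, but arranging them so that the upper and lower bounds on the length meet cleanly at $|\JJ(\LL')|$ (respectively $|\MM(\LL')|$) still requires a little care.
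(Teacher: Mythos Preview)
Your proposal is correct and follows essentially the same approach as the paper: project the maximal-length chain $(x_i)$ to $\LL'$, show that each collapsed step $x_{i-1}\equiv x_i$ forces $j_i\equiv (j_i)_*$, and combine this with the observation that join-irreducibles of $\LL'$ arise as minima of their classes (hence as join-irreducibles $j$ of $\LL$ with $j\not\equiv j_*$) to sandwich $|\JJ(\LL')|$ between matching bounds. Your deduction of $j_i\equiv (j_i)_*$ via $j_i\wedge x_{i-1}\equiv j_i\wedge x_i=j_i$ and convexity is in fact slightly cleaner than the paper's, which asserts $(j_i)_*\vee x_{i-1}=x_{i-1}$ without justification; but this is a cosmetic difference in an otherwise identical argument.
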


\begin{proof}
Let $\LL$ be an extremal lattice, with maximal-length chain $(x_i)_{i=0}^n$.  Let $\LL'=\LL/{\equiv}$ be a
lattice quotient of $\LL$.  Recall that the equivalence classes with respect to $\equiv$ are necessarily intervals in $\LL$.  A join-irreducible element of $\LL'$ corresponds to an interval in $\LL$ whose minimum element is join-irreducible.

Suppose $x_{i-1}\equiv x_i$.  Let $j_i\gtrdot (j_i)_*$ be the join-irreducible corresponding to $x_i$ and its unique cover in $\LL$.  Since $(j_i)_*\vee x_{i-1}=x_{i-1}$ but $j_i\vee x_{i-1}=x_i$, and we have $x_{i-1}\equiv x_i$, we must have $(j_i)_*\equiv j_i$.  Thus, $j_i$ is not the minimum element of its equivalence class in $\LL'$.  It follows that the number of join-irreducibles of $\LL'$ is at most the length of $\LL'$, so in fact they are equal.  The same holds for meet-irreducibles, so $\LL'$ is extremal.
\end{proof}

\begin{example}
The Galois graph of a Cambrian lattice may be constructed as follows, as illustrated in~\Cref{fig:camb_lattice,fig:tamari_extremal,fig:b2camb}. (Non-experts may ignore the description below --- \Cref{fig:camb_lattice,fig:tamari_extremal,fig:b2camb} can still be appreciated as examples of extremal lattices without knowing where the Galois graph comes from.)  For $c=a_1 a_2 \cdots a_n$ a permutation of the simple reflections of $W$, Reading's \defn{$c$-sorting word} for the long element of $W$ is the leftmost reduced word for $w_\circ$ in the word $c^\infty$: \[w_\circ(c) = a_{1}a_2 \cdots a_{N} \text{ with each } a_i \in S.\]  The Galois graph then has vertex set $[N]$ (indexing the reflections $T$ of $W$) and a directed edge $j\to i$ iff $j>i$ and $(a_i \cdots a_{j-1}) a_j (a_{j-1} \cdots a_i)$ does not lie in the parabolic subgroup of $W$ generated by all simple reflections except $a_i$.

Cambrian lattices are lattice quotients of weak order.  A $c$-sortable element $w \in \Camb_c(W)$ is the minimal element of a collapsed weak-order interval; denote the corresponding maximal element by $u$. One can check that in the maximal orthogonal pair $(w_\JJ,w_\MM)$, $w_\JJ$ encodes the inversion set of $w$, and $w_\MM$ encodes the inversion set of $w_\circ u$. 
\label{ex:cambrian}
\end{example}

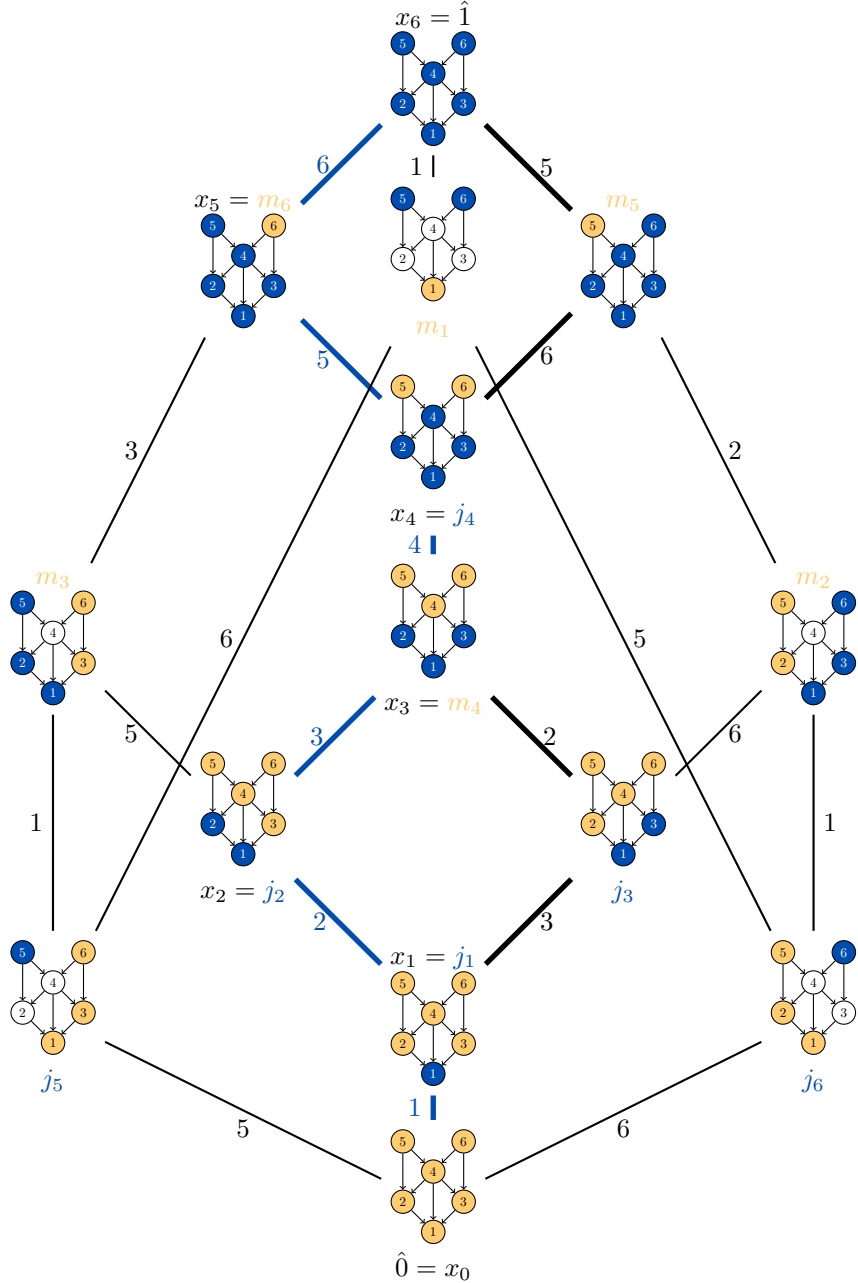
\begin{figure}[htbp]
\begin{tikzpicture}[scale=2.5]
\node (x0) [align=center] at (0,0) {\scalebox{0.5}{\begin{tikzpicture}[scale=.8]
\node (1) [circle,thick,draw,fill=meet] at (0,0) {1};
\node (2) [circle,thick,draw,fill=meet]  at (-1,1) {2};
\node (3) [circle,thick,draw,fill=meet] at (1,1) {3};
\node (4) [circle,thick,draw,fill=meet] at (0,2) {4};
\node (5) [circle,thick,draw,fill=meet] at (-1,3) {5};
\node (6) [circle,thick,draw,fill=meet] at (1,3) {6};
\draw[->,thick] (5) to (4);\draw[->,thick] (5) to (2);\draw[->,thick] (6) to (3);
\draw[->,thick] (4) to (2);
\draw[->,thick] (2) to (1);
\draw[->,thick] (6) to (4);
\draw[->,thick] (4) to (3);
\draw[->,thick] (3) to (1);
\draw[->,thick] (4) to (1);
\end{tikzpicture}} \\ $\hat{0}=x_0$};
\node (x1) [align=center] at (0,1) { $x_1=\textcolor{join}{j_1}$\\ \scalebox{0.5}{\begin{tikzpicture}[scale=.8]
\node (1) [circle,thick,draw,fill=join,text=white] at (0,0) {1};
\node (2) [circle,thick,draw,fill=meet]  at (-1,1) {2};
\node (3) [circle,thick,draw,fill=meet] at (1,1) {3};
\node (4) [circle,thick,draw,fill=meet] at (0,2) {4};
\node (5) [circle,thick,draw,fill=meet] at (-1,3) {5};
\node (6) [circle,thick,draw,fill=meet] at (1,3) {6};
\draw[->,thick] (5) to (4);\draw[->,thick] (5) to (2);\draw[->,thick] (6) to (3);
\draw[->,thick] (4) to (2);
\draw[->,thick] (2) to (1);
\draw[->,thick] (6) to (4);
\draw[->,thick] (4) to (3);
\draw[->,thick] (3) to (1);
\draw[->,thick] (4) to (1);
\end{tikzpicture}}};
\node (x2) [align=center] at (-1,2) {\scalebox{0.5}{\begin{tikzpicture}[scale=.8]
\node (1) [circle,thick,draw,fill=join,text=white] at (0,0) {1};
\node (2) [circle,thick,draw,fill=join,text=white]  at (-1,1) {2};
\node (3) [circle,thick,draw,fill=meet] at (1,1) {3};
\node (4) [circle,thick,draw,fill=meet] at (0,2) {4};
\node (5) [circle,thick,draw,fill=meet] at (-1,3) {5};
\node (6) [circle,thick,draw,fill=meet] at (1,3) {6};
\draw[->,thick] (5) to (4);\draw[->,thick] (5) to (2);\draw[->,thick] (6) to (3);
\draw[->,thick] (4) to (2);
\draw[->,thick] (2) to (1);
\draw[->,thick] (6) to (4);
\draw[->,thick] (4) to (3);
\draw[->,thick] (3) to (1);
\draw[->,thick] (4) to (1);
\end{tikzpicture}} \\ $x_2=\textcolor{join}{j_2}$};
\node (x3) [align=center] at (0,3) {\scalebox{0.5}{\begin{tikzpicture}[scale=.8]
\node (1) [circle,thick,draw,fill=join,text=white] at (0,0) {1};
\node (2) [circle,thick,draw,fill=join,text=white]  at (-1,1) {2};
\node (3) [circle,thick,draw,fill=join,text=white] at (1,1) {3};
\node (4) [circle,thick,draw,fill=meet] at (0,2) {4};
\node (5) [circle,thick,draw,fill=meet] at (-1,3) {5};
\node (6) [circle,thick,draw,fill=meet] at (1,3) {6};
\draw[->,thick] (5) to (4);\draw[->,thick] (5) to (2);\draw[->,thick] (6) to (3);
\draw[->,thick] (4) to (2);
\draw[->,thick] (2) to (1);
\draw[->,thick] (6) to (4);
\draw[->,thick] (4) to (3);
\draw[->,thick] (3) to (1);
\draw[->,thick] (4) to (1);
\end{tikzpicture}} \\ $x_3=\textcolor{meet}{m_4}$};
\node (x4) [align=center] at (0,4) {\scalebox{0.5}{\begin{tikzpicture}[scale=.8]
\node (1) [circle,thick,draw,fill=join,text=white] at (0,0) {1};
\node (2) [circle,thick,draw,fill=join,text=white]  at (-1,1) {2};
\node (3) [circle,thick,draw,fill=join,text=white] at (1,1) {3};
\node (4) [circle,thick,draw,fill=join,text=white] at (0,2) {4};
\node (5) [circle,thick,draw,fill=meet] at (-1,3) {5};
\node (6) [circle,thick,draw,fill=meet] at (1,3) {6};
\draw[->,thick] (5) to (4);\draw[->,thick] (5) to (2);\draw[->,thick] (6) to (3);
\draw[->,thick] (4) to (2);
\draw[->,thick] (2) to (1);
\draw[->,thick] (6) to (4);
\draw[->,thick] (4) to (3);
\draw[->,thick] (3) to (1);
\draw[->,thick] (4) to (1);
\end{tikzpicture}} \\ $x_4=\textcolor{join}{j_4}$};
\node (x5) [align=center] at (-1,5) {$x_5=\textcolor{meet}{m_6}$ \\ \scalebox{0.5}{\begin{tikzpicture}[scale=.8]
\node (1) [circle,thick,draw,fill=join,text=white] at (0,0) {1};
\node (2) [circle,thick,draw,fill=join,text=white]  at (-1,1) {2};
\node (3) [circle,thick,draw,fill=join,text=white] at (1,1) {3};
\node (4) [circle,thick,draw,fill=join,text=white] at (0,2) {4};
\node (5) [circle,thick,draw,fill=join,text=white] at (-1,3) {5};
\node (6) [circle,thick,draw,fill=meet] at (1,3) {6};
\draw[->,thick] (5) to (4);\draw[->,thick] (5) to (2);\draw[->,thick] (6) to (3);
\draw[->,thick] (4) to (2);
\draw[->,thick] (2) to (1);
\draw[->,thick] (6) to (4);
\draw[->,thick] (4) to (3);
\draw[->,thick] (3) to (1);
\draw[->,thick] (4) to (1);
\end{tikzpicture}}};
\node (x6) [align=center] at (0,6) {$x_6=\hat{1}$ \\ \scalebox{0.5}{\begin{tikzpicture}[scale=.8]
\node (1) [circle,thick,draw,fill=join,text=white] at (0,0) {1};
\node (2) [circle,thick,draw,fill=join,text=white]  at (-1,1) {2};
\node (3) [circle,thick,draw,fill=join,text=white] at (1,1) {3};
\node (4) [circle,thick,draw,fill=join,text=white] at (0,2) {4};
\node (5) [circle,thick,draw,fill=join,text=white] at (-1,3) {5};
\node (6) [circle,thick,draw,fill=join,text=white] at (1,3) {6};
\draw[->,thick] (5) to (4);\draw[->,thick] (5) to (2);\draw[->,thick] (6) to (3);
\draw[->,thick] (4) to (2);
\draw[->,thick] (2) to (1);
\draw[->,thick] (6) to (4);
\draw[->,thick] (4) to (3);
\draw[->,thick] (3) to (1);
\draw[->,thick] (4) to (1);
\end{tikzpicture}}};
\node (j3) [align=center]  at (1,2) {\scalebox{0.5}{\begin{tikzpicture}[scale=.8]
\node (1) [circle,thick,draw,fill=join,text=white] at (0,0) {1};
\node (2) [circle,thick,draw,fill=meet]  at (-1,1) {2};
\node (3) [circle,thick,draw,fill=join,text=white] at (1,1) {3};
\node (4) [circle,thick,draw,fill=meet] at (0,2) {4};
\node (5) [circle,thick,draw,fill=meet] at (-1,3) {5};
\node (6) [circle,thick,draw,fill=meet] at (1,3) {6};
\draw[->,thick] (5) to (4);\draw[->,thick] (5) to (2);\draw[->,thick] (6) to (3);
\draw[->,thick] (4) to (2);
\draw[->,thick] (2) to (1);
\draw[->,thick] (6) to (4);
\draw[->,thick] (4) to (3);
\draw[->,thick] (3) to (1);
\draw[->,thick] (4) to (1);
\end{tikzpicture}} \\ $\textcolor{join}{j_3}$};
\node (j5) [align=center] at (-2,1) {\scalebox{0.5}{\begin{tikzpicture}[scale=.8]
\node (1) [circle,thick,draw,fill=meet] at (0,0) {1};
\node (2) [circle,thick,draw,fill=white]  at (-1,1) {2};
\node (3) [circle,thick,draw,fill=meet] at (1,1) {3};
\node (4) [circle,thick,draw,fill=white] at (0,2) {4};
\node (5) [circle,thick,draw,fill=join,text=white] at (-1,3) {5};
\node (6) [circle,thick,draw,fill=meet] at (1,3) {6};
\draw[->,thick] (5) to (4);\draw[->,thick] (5) to (2);\draw[->,thick] (6) to (3);
\draw[->,thick] (4) to (2);
\draw[->,thick] (2) to (1);
\draw[->,thick] (6) to (4);
\draw[->,thick] (4) to (3);
\draw[->,thick] (3) to (1);
\draw[->,thick] (4) to (1);
\end{tikzpicture}} \\ $\textcolor{join}{j_5}$};
\node (j6) [align=center] at (2,1) {\scalebox{0.5}{\begin{tikzpicture}[scale=.8]
\node (1) [circle,thick,draw,fill=meet] at (0,0) {1};
\node (2) [circle,thick,draw,fill=meet]  at (-1,1) {2};
\node (3) [circle,thick,draw,fill=white] at (1,1) {3};
\node (4) [circle,thick,draw,fill=white] at (0,2) {4};
\node (5) [circle,thick,draw,fill=meet] at (-1,3) {5};
\node (6) [circle,thick,draw,fill=join,text=white] at (1,3) {6};
\draw[->,thick] (5) to (4);\draw[->,thick] (5) to (2);\draw[->,thick] (6) to (3);
\draw[->,thick] (4) to (2);
\draw[->,thick] (2) to (1);
\draw[->,thick] (6) to (4);
\draw[->,thick] (4) to (3);
\draw[->,thick] (3) to (1);
\draw[->,thick] (4) to (1);
\end{tikzpicture}} \\ $\textcolor{join}{j_6}$};
\node (m1)  [align=center]  at (0,5) {\scalebox{0.5}{\begin{tikzpicture}[scale=.8]
\node (1) [circle,thick,draw,fill=meet] at (0,0) {1};
\node (2) [circle,thick,draw,fill=white]  at (-1,1) {2};
\node (3) [circle,thick,draw,fill=white] at (1,1) {3};
\node (4) [circle,thick,draw,fill=white] at (0,2) {4};
\node (5) [circle,thick,draw,fill=join,text=white] at (-1,3) {5};
\node (6) [circle,thick,draw,fill=join,text=white] at (1,3) {6};
\draw[->,thick] (5) to (4);\draw[->,thick] (5) to (2);\draw[->,thick] (6) to (3);
\draw[->,thick] (4) to (2);
\draw[->,thick] (2) to (1);
\draw[->,thick] (6) to (4);
\draw[->,thick] (4) to (3);
\draw[->,thick] (3) to (1);
\draw[->,thick] (4) to (1);
\end{tikzpicture}} \\ $\textcolor{meet}{m_1}$};
\node (m2)  [align=center]  at (2,3) {$\textcolor{meet}{m_2}$ \\ \scalebox{0.5}{\begin{tikzpicture}[scale=.8]
\node (1) [circle,thick,draw,fill=join,text=white] at (0,0) {1};
\node (2) [circle,thick,draw,fill=meet]  at (-1,1) {2};
\node (3) [circle,thick,draw,fill=join,text=white] at (1,1) {3};
\node (4) [circle,thick,draw,fill=white] at (0,2) {4};
\node (5) [circle,thick,draw,fill=meet] at (-1,3) {5};
\node (6) [circle,thick,draw,fill=join,text=white] at (1,3) {6};
\draw[->,thick] (5) to (4);\draw[->,thick] (5) to (2);\draw[->,thick] (6) to (3);
\draw[->,thick] (4) to (2);
\draw[->,thick] (2) to (1);
\draw[->,thick] (6) to (4);
\draw[->,thick] (4) to (3);
\draw[->,thick] (3) to (1);
\draw[->,thick] (4) to (1);
\end{tikzpicture}}};
\node (m3)  [align=center]  at (-2,3) {$\textcolor{meet}{m_3}$ \\ \scalebox{0.5}{\begin{tikzpicture}[scale=.8]
\node (1) [circle,thick,draw,fill=join,text=white] at (0,0) {1};
\node (2) [circle,thick,draw,fill=join,text=white]  at (-1,1) {2};
\node (3) [circle,thick,draw,fill=meet] at (1,1) {3};
\node (4) [circle,thick,draw,fill=white] at (0,2) {4};
\node (5) [circle,thick,draw,fill=join,text=white] at (-1,3) {5};
\node (6) [circle,thick,draw,fill=meet] at (1,3) {6};
\draw[->,thick] (5) to (4);\draw[->,thick] (5) to (2);\draw[->,thick] (6) to (3);
\draw[->,thick] (4) to (2);
\draw[->,thick] (2) to (1);
\draw[->,thick] (6) to (4);
\draw[->,thick] (4) to (3);
\draw[->,thick] (3) to (1);
\draw[->,thick] (4) to (1);
\end{tikzpicture}}};
\node (m5)  [align=center] at (1,5) {$\textcolor{meet}{m_5}$ \\ \scalebox{0.5}{\begin{tikzpicture}[scale=.8]
\node (1) [circle,thick,draw,fill=join,text=white] at (0,0) {1};
\node (2) [circle,thick,draw,fill=join,text=white]  at (-1,1) {2};
\node (3) [circle,thick,draw,fill=join,text=white] at (1,1) {3};
\node (4) [circle,thick,draw,fill=join,text=white] at (0,2) {4};
\node (5) [circle,thick,draw,fill=meet] at (-1,3) {5};
\node (6) [circle,thick,draw,fill=join,text=white] at (1,3) {6};
\draw[->,thick] (5) to (4);\draw[->,thick] (5) to (2);\draw[->,thick] (6) to (3);
\draw[->,thick] (4) to (2);
\draw[->,thick] (2) to (1);
\draw[->,thick] (6) to (4);
\draw[->,thick] (4) to (3);
\draw[->,thick] (3) to (1);
\draw[->,thick] (4) to (1);
\end{tikzpicture}}};
\draw[-,thick,color=join,line width=2pt] (x0) to node[midway, left] {$1$} (x1) to node[midway, left] {$2$} (x2)  to node[midway, left] {$3$} (x3) to node[midway, left] {$4$} (x4) to node[midway, left] {$5$} (x5) to node[midway, left] {$6$} (x6);
\draw[-,thick,line width=2pt] (x1) to node[midway, right] {$3$} (j3) to node[midway, right] {$2$} (x3);
\draw[-,thick,line width=2pt] (x4) to node[midway, right] {$6$} (m5) to node[midway, right] {$5$} (x6);
\draw[-,thick] (j6) to node[midway, right] {$1$} (m2) to node[midway, right] {$2$} (m5);
\draw[-,thick] (j6) to node[midway, right] {$5$} (m1) to  node[midway, left] {$1$} (x6);
\draw[-,thick] (j5) to node[midway, left] {$1$} (m3) to node[midway, left] {$3$} (x5);
\draw[-,thick] (j5) to node[midway, left] {$6$} (m1);
\draw[-,thick] (x2) to node[midway, left] {$5$} (m3);
\draw[-,thick] (j3) to node[midway, right] {$6$} (m2);
\draw[-,thick] (x0) to node[midway, below] {$5$} (j5);
\draw[-,thick] (x0) to node[midway, below] {$6$} (j6);
\end{tikzpicture}
\caption{A (trim) Cambrian lattice for $A_3$ for bipartite Coxeter element.  The fixed maximal-length chain is colored blue and labeled by $\{x_i\}_{i=0}^6$.  The corresponding join-irreducible elements $\{j_i\}_{i=1}^6$ are in blue and the meet-irreducible elements $\{m_i\}_{i=1}^6$ are in orange.  Each element of the Cambrian lattice is also represented using~\Cref{thm:extreme_representation} as its corresponding maximal orthogonal pair $(X,Y)$, drawn directly on the Galois graph---the sets $X$ are colored blue, corresponding to the join-irreducible elements below the element, while the sets $Y$ are colored orange, corresponding to the meet-irreducible elements above the element.  Edges are labeled according to~\Cref{labelling} (in order, the labels $1,2,3,4,5,6$ correspond to the reflections $(23),(13),(24),(14),(34),(12)$).}
\label{fig:tamari_extremal}
\end{figure}

\subsection{Extremal lattices from representation theory}
\label{sec:rep_theory}

In this section, we briefly describe a source of extremal lattices in the representation theory of finite-dimensional algebras: namely, as the lattices of torsion classes for suitably chosen algebras.  The lattices that can be
obtained in this way include the Cambrian lattices of simply-laced type \cite{IT}, but also many more.  (We will show in~\Cref{cor:rep_are_trim} that these extremal lattices are actually trim.)

Let $A$ be a finite-dimensional algebra over a field $k$.  Write $\mod A$ for
the catgory of finite-dimensional left $A$-modules.  A module is called \defn{indecomposable} if it is not isomorphic to a direct sum of two non-zero
modules.  The algebra $A$ is said to be of \defn{finite representation type} if
it has only a finite number of isomorphism classes of indecomposable modules.  The category $\mod A$ is said to \defn{have no cycles} if there is no sequence of
indecomposable modules $M_0, M_1, \dots, M_r=M_0$ and nonisomorphisms $f_i:M_i \rightarrow M_{i+1}$ for $0 \leq i \leq r-1$.  

A \defn{full additive subcategory} of $\mod A$ is a subcategory of $\mod A$
which consists of direct sums of copies of some subset of the indecomposable
$A$-modules (and with morphisms inherited from the module category).
A \defn{torsion pair} in $\mod A$ is a pair of full, additive subcategories $(X,Y)$ such
that for $M\in X$ and $N\in Y$, we have $\Hom(M,N)=0$, and $X$ and $Y$ are both
maximal with respect to this condition.  The subcategory $X$ is called the
\defn{torsion class}, while the subcategory $Y$ is called the \defn{torsion-free class}.

Define a graph $G(A)$ whose vertices correspond to the indecomposable modules of $A$, and with an arrow from $M$ to $N$ if and only if $\Hom(M,N)\ne 0$.  The following theorem now follows from unwinding the definitions.

\begin{theorem} If $A$ is representation finite and $\mod A$ has no cycles, then $G(A)$ can be viewed as a Galois graph.  The elements of the extremal lattice associated to $G(A)$ are naturally the torsion pairs of $A$, ordered with respect to inclusion of torsion classes (or reverse inclusion of torsion-free classes).\label{thm:rep_are_extremal}\end{theorem}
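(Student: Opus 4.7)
The plan is to unpack the two assertions in turn: first that the hypotheses permit $G(A)$ to be numbered as a Galois graph, and second that maximal orthogonal pairs in $G(A)$ are in order-preserving bijection with torsion pairs of $A$.

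\textbf{Step 1: $G(A)$ is a Galois graph.} Since $A$ is representation finite, there are finitely many isomorphism classes of indecomposables, say $n$ of them. The no-cycles hypothesis says exactly that the binary relation on isomorphism classes defined by ``there is a nonisomorphism $M \to N$'' is acyclic; equivalently, the underlying directed graph of nonzero homomorphisms (between nonisomorphic indecomposables) has no directed cycles. I would pick any linear extension $M_1, M_2, \ldots, M_n$ of this acyclic relation such that $\Hom(M_i, M_k) \neq 0$ and $M_i \not\cong M_k$ forces $i > k$. Deleting the (always nonzero) self-loops at each $M_i$, the resulting graph $G(A)$ satisfies the Galois graph conditions on $[n]$: simple, and all arrows $i \to k$ have $i > k$.

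\textbf{Step 2: translating orthogonality.} Because $\mod A$ is Krull--Schmidt and $A$ is representation finite, every full additive subcategory of $\mod A$ is determined by the set of indecomposables it contains; so full additive subcategories correspond bijectively to subsets of $[n]$. If $(\mathcal{T}, \mathcal{F})$ is a pair of such subcategories corresponding to $(X, Y) \subseteq [n] \times [n]$, the condition $\Hom(M, N) = 0$ for all $M \in \mathcal{T}$, $N \in \mathcal{F}$ translates to: there is no arrow $i \to k$ in $G(A)$ with $i \in X$, $k \in Y$. Moreover $X \cap Y = \emptyset$ is forced: any indecomposable $M$ in both would give $0 = \Hom(M, M) \ni \mathrm{id}_M$, a contradiction. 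Thus orthogonal pairs of full additive subcategories with trivial $\Hom$ are precisely orthogonal pairs $(X, Y)$ in the Galois graph sense.

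\textbf{Step 3: maximality and the lattice order.} The paper's definition of a torsion pair is a maximal such orthogonal pair, and maximal orthogonal pairs $(X, Y)$ in $G(A)$ are defined by the analogous maximality in $[n]$. Since the two notions of orthogonality agree by Step 2 and since the passage between subcategories and index subsets is inclusion-preserving, the two maximality conditions coincide, giving a bijection between torsion pairs of $A$ and maximal orthogonal pairs of $G(A)$. Finally, the lattice order on maximal orthogonal pairs is $(X, Y) \leq (X', Y')$ iff $X \subseteq X'$ iff $Y' \subseteq Y$ (recalled from \Cref{thm:extreme_representation}), which matches inclusion of torsion classes and reverse inclusion of torsion-free classes. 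Combining everything with \Cref{thm:extreme_representation} yields the theorem.

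\textbf{Main obstacle.} The only place requiring care is Step 1: making sure that after deleting self-loops the no-cycles hypothesis really produces an \emph{acyclic} directed graph, so that a compatible linear order on indecomposables exists. This is essentially immediate once the no-cycles hypothesis is spelled out in terms of nonisomorphisms between indecomposables, but it is the one non-formal input from representation theory; everything after that is a dictionary.
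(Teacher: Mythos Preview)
Your proposal is correct and follows exactly the approach the paper intends: the paper states only that ``the following theorem now follows from unwinding the definitions,'' and your three steps are precisely that unwinding. The only point worth noting is that you were careful about self-loops and about why no cycles in $\mod A$ (a condition on nonisomorphisms) really does yield acyclicity of $G(A)$ off the diagonal; the paper leaves this implicit.
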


This partial order (torsion pairs, ordered by inclusion of torsion classes; which can also be described as inclusion order on torsion classes) has been considered in \cite{IRTT,Ringel,GM2015,DIRRT}.  We refine~\Cref{thm:rep_are_extremal} in~\Cref{cor:rep_are_trim} to show that the lattices in question are in fact trim.

\begin{example}
We consider~\Cref{ex:cambrian} from the representation-theoretic perspective.  If $A$ is the path algebra of a Dynkin quiver, it is representation finite
and $\mod A$ has no cycles; the Cambrian lattices of types $A$, $D$, and $E$
can be obtained in this way \cite{IT}.
\end{example}

\subsection{Left modular lattices}
\label{sec:left_modular}

Left modular lattices generalize distributive lattices while still allowing us to find natural labellings of cover relations by join- and meet-irreducibles.

For any finite lattice $\LL$ and elements $y \leq z$, we always have the \defn{modular inequality}
\[(y\vee x)\wedge z \geq y\vee(x\wedge z).\]

\begin{definition}
An element $x$ of a lattice $\LL$ is called \defn{left modular} if for any $y\leq z$
we have the equality
\[(y\vee x)\wedge z=y\vee(x\wedge z).\]
A lattice is called \defn{left modular} if it has a maximal chain of left modular elements.
\label{def:left_modular}
\end{definition}

\begin{lemma}
\label{lem:left_mod_cover}
If $(y\vee x)\wedge z=y\vee(x\wedge z)$ for all covers $y\lessdot z$, then $x$ is left modular.
\end{lemma}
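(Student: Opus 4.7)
The plan is induction on the length $n$ of the interval $[y,z]$ in $\LL$, with the base cases $n=0,1$ following trivially and from the cover hypothesis respectively. The modular inequality gives $(y\vee x)\wedge z\geq y\vee(x\wedge z)$ for free at every step, so I only need to prove the reverse inequality. Write $c:=(y\vee x)\wedge z$ and aim to show $c\leq y\vee(x\wedge z)$.

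I then split according to whether $c<z$ strictly or $c=z$. The first case is handled cleanly by the induction: $[y,c]$ is a proper subinterval of $[y,z]$, hence strictly shorter, and since $c\leq y\vee x$ the inductive hypothesis applied to $y\leq c$ gives $c=(y\vee x)\wedge c=y\vee(x\wedge c)\leq y\vee(x\wedge z)$, as desired.

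The harder case, and the main obstacle, is $c=z$, i.e.\ $z\leq y\vee x$, because there is no room to shorten on the $z$-side. My plan here is to pick any saturated chain $y=w_0\lessdot w_1\lessdot\cdots\lessdot w_n=z$ and set $w:=w_{n-1}$. The cover hypothesis applied to $w\lessdot z$, combined with $z\leq y\vee x\leq w\vee x$, yields $z=(w\vee x)\wedge z=w\vee(x\wedge z)$; the inductive hypothesis applied to the strictly shorter interval $[y,w]$, combined with $w\leq z\leq y\vee x$, yields $w=(y\vee x)\wedge w=y\vee(x\wedge w)$. Hence $y\vee(x\wedge z)$ lies in $[w,z]$, so the cover $w\lessdot z$ forces it to equal $w$ or $z$. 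If it equaled $w$, then $x\wedge z\leq w$, so $x\wedge z=x\wedge w$, and then $w\vee(x\wedge z)=w$, contradicting $w\vee(x\wedge z)=z$. Therefore $y\vee(x\wedge z)=z=c$, closing the induction. The ruling out of the spurious value $w$ in this last step is what I expect to be the most delicate part of the argument; everything else is a routine application of the modular inequality and the inductive hypothesis, and no Galois-graph or irreducibles machinery from later in Section~\ref{sec:lattices} is needed.
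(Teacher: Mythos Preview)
Your proof is correct. It is, however, a genuinely different route from the paper's.

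The paper argues by contrapositive in three lines: assuming $x$ is \emph{not} left modular, take $y\leq z$ with $p:=y\vee(x\wedge z)<(y\vee x)\wedge z=:q$, pick any cover $p'\lessdot q'$ inside $[p,q]$, and observe directly that $p'\vee(x\wedge q')=p'$ (since $x\wedge q'\leq x\wedge z\leq p\leq p'$) while $(p'\vee x)\wedge q'=q'$ (since $q'\leq q\leq y\vee x\leq p'\vee x$). So the cover hypothesis fails at $p'\lessdot q'$.

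Your argument instead proves the modular equality for arbitrary $y\leq z$ by induction on the length of $[y,z]$, splitting on whether $(y\vee x)\wedge z$ is strictly below $z$ or equals $z$, and in the second case squeezing $y\vee(x\wedge z)$ into the cover $w\lessdot z$ to force it to be $z$. All the steps check out; in particular your elimination of the spurious value $w$ is clean, and the finiteness of $\LL$ guarantees the length of proper subintervals strictly drops.

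What each approach buys: the paper's contrapositive is shorter and avoids any induction or case split---it simply locates a bad cover inside the ``defect interval'' $[p,q]$. Your direct induction is longer but perhaps more constructive, and the machinery you set up (tracking $c$ and peeling off a top cover) could be reused if one wanted a quantitative handle on how the equality propagates along chains. Neither proof needs anything beyond elementary lattice manipulations.
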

\begin{proof} Suppose $x$ is not left modular, as witnessed by the fact that $p = y \vee (x \wedge z) < (y \vee x) \wedge z = q$.  Choose $p \leq p' \lessdot q' \leq q$.  Then $p' \vee (x \wedge q') = p'$ and $(p' \vee x) \wedge q' = q'$.
\end{proof}

\subsubsection{Labelling of Cover Relations}
Let $\LL$ be left modular, with a given left modular chain
\[\hat 0 = x_0 \lessdot x_1 \lessdot \dots \lessdot x_n=\hat 1.\]
We assign to each join-irreducible $j \in \JJ$ and to each meet-irreducible $m \in \MM$ a label:
\begin{align*}\beta_\JJ(j)&:=\min \{ i \mid j\leq x_i\} \\ \beta_\MM(m)&:=\max \{ i \mid m \geq x_{i-1} \}.\end{align*}
Both $\beta_\JJ$ and $\beta_\MM$ are surjections to $[n]$.  This leads to three equivalent definitions of a labelling of the cover relations of $\LL$.

\begin{theorem}[{\cite{liu1999left}}]
  The following three labels associated to a cover relation $y\lessdot z$ in a left modular
  lattice $\LL$ are equal: \begin{enumerate}
  \item $\min\{ \beta_\JJ(j) \mid j\in \JJ,  y\vee j=z\}$,
  \item $\min \{ i \mid y \vee x_i \wedge z =z\}$,
  \item $\max\{ \beta_\MM(m) \mid m\in \MM, z\wedge m=y\}$.
  \end{enumerate}
We define $\gamma(y\lessdot z)$ to be this common value, and call $\gamma$ a \defn{left modular labelling}.
\label{labelling}
\end{theorem}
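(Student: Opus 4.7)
The plan is to prove $(1) = (2) = (3)$ by first reformulating $(2)$ via left modularity into a more symmetric condition, and then matching $(1)$ and $(3)$ to this reformulation using the standard decompositions of a lattice element as a join of join-irreducibles below it, respectively a meet of meet-irreducibles above it. Specifically, I would set $f(i) := y \vee (x_i \wedge z)$; because each $x_i$ is left modular and $y \leq z$, left modularity immediately rewrites this as $f(i) = (y \vee x_i) \wedge z$. Since $y \lessdot z$ is a cover and $y \leq f(i) \leq z$, each $f(i)$ must lie in $\{y, z\}$, and monotonicity together with $f(0) = y$, $f(n) = z$ shows that $(2)$ is well-defined and equals $\min\{i : y \vee x_i \geq z\}$.

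For $(1) = (2)$: the inequality $(2) \leq (1)$ is immediate, since any $j \in \JJ$ with $y \vee j = z$ satisfies $j \leq x_{\beta_\JJ(j)}$, forcing $y \vee x_{\beta_\JJ(j)} \geq z$. For the reverse, with $k = (2)$, I would expand $x_k \wedge z$ as a join of join-irreducibles $j \in \JJ$ below it; the identity $y \vee (x_k \wedge z) = z$, rewritten as $\bigvee_j (y \vee j) = z$, combined with the fact that each $y \vee j$ is forced into $\{y, z\}$ by the cover $y \lessdot z$, guarantees that at least one such $j$ realizes $y \vee j = z$, and this $j \leq x_k$ then satisfies $\beta_\JJ(j) \leq k$.

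The argument for $(2) = (3)$ is the exact dual. For $(3) \leq (2)$: given $m \in \MM$ with $z \wedge m = y$ and $k = \beta_\MM(m)$, the conditions $m \geq x_{k-1}$ and $y \leq m$ give $m \geq y \vee x_{k-1}$, so $(y \vee x_{k-1}) \wedge z \leq z \wedge m = y$, which forces $f(k-1) = y$ and hence $k \leq (2)$. For $(2) \leq (3)$: with $k = (2)$, I would decompose $y \vee x_{k-1}$ as a meet of meet-irreducibles $m \in \MM$ above it, each automatically satisfying $\beta_\MM(m) \geq k$; the identity $\bigwedge_m (z \wedge m) = z \wedge (y \vee x_{k-1}) = f(k-1) = y$ together with $z \wedge m \in \{y, z\}$ then guarantees that at least one such $m$ realizes $z \wedge m = y$.

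The one genuine lattice-theoretic input is the left-modular identity $(y \vee x_i) \wedge z = y \vee (x_i \wedge z)$; this is precisely what collapses the join-side description $(1)$ and the meet-side description $(3)$ onto the common reformulation $y \vee x_i \geq z$ of $(2)$. Everything else is bookkeeping inside the two-element interval $[y,z]$: because $y \lessdot z$, the values $f(i)$, $y \vee j$ (for $j \leq z$), and $z \wedge m$ (for $m \geq y$) are all forced into $\{y,z\}$, after which the usual irreducible decompositions supply the required witnesses. I do not anticipate a real obstacle beyond keeping the directions of inequalities and the order of quantifiers straight when extracting the witness $j$ or $m$ from the relevant decomposition.
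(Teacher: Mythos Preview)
The paper does not actually give a proof of this theorem; it simply cites \cite{liu1999left} for the equality of (1) and (2), and notes that (2)\,=\,(3) is dual, with a proof in \cite[Lemma~2.1]{mcnamara2006poset}. Your argument is correct and self-contained, and is essentially the standard one: left modularity collapses $f(i)=(y\vee x_i)\wedge z=y\vee(x_i\wedge z)$ into the two-element interval $\{y,z\}$, after which writing $x_k\wedge z$ as a join of join-irreducibles (respectively $y\vee x_{k-1}$ as a meet of meet-irreducibles) and using that $y\vee(-)$ preserves joins (resp.\ $z\wedge(-)$ preserves meets) in any lattice produces the needed witness. The only points you might make explicit are the degenerate cases $x_k\wedge z=\hat 0$ and $y\vee x_{k-1}=\hat 1$, but these are immediately excluded since they would force $f(k)=y$ and $f(k-1)=z$ respectively, contradicting the minimality of $k$.
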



The equality of (1) and (2) was proven in~\cite{liu1999left}, and the equality
of (2) and (3) is dual; a proof is given in Lemma 2.1 of~\cite{mcnamara2006poset}.   This labelling recovers the labelling we have already defined for distributive lattices by associating poset elements with the order in which they are added in the given maximal chain.

\subsubsection{EL-Labellings}
An edge-labelling of the Hasse diagram of a poset $\Po$ is called an \defn{EL-labelling} \cite{Bj}
if, for any $x,y \in \Po$ with $x<y$, \begin{itemize}\item
there is a unique unrefinable chain from
$x$ to $y$ such that the word obtained by reading its labels from bottom
to top of the chain yields a word which is weakly increasing, and
\item this word lexicographically precedes the word corresponding to
  any other unrefinable chain from $x$ to $y$.  \end{itemize}

The existence of an EL-labelling of a poset implies that its order
complex is shellable (which gives good control over its homotopy type and M\"obius function, though this will not be especially relevant for us) \cite{Bj}. It was shown in~\cite{liu1999left} that if $\LL$ is left modular, the labelling
$\gamma$ is an EL-labelling.  In fact, more is true.

\subsubsection{Interpolating Labellings}
We say that an EL-labelling $\lambda$ is an \defn{interpolating labelling} if whenever we have $x,y,z$ in $\LL$ with $x\lessdot y\lessdot z$, either:
\begin{itemize}
\item  $\lambda(x\lessdot y)<\lambda(y\lessdot z)$ or
\item if the increasing chain from $x$ to $z$ is $x=x_0\lessdot x_1 \lessdot \dots\lessdot x_r=z$, then $\lambda(x_0\lessdot x_1)=\lambda(y\lessdot z)$ and $\lambda (x_{r-1}\lessdot x_r) = \lambda(x\lessdot y)$.
 \end{itemize}

\begin{example}
	We may describe Cambrian lattices using flips on $2$-colored factorizations of a Coxeter element~\cite{speyer2013acyclic,stump2015cataland}.  It is shown in~\cite{stump2015cataland} that the natural labelling on the edges of a Cambrian lattice coming from these factorizations is an EL-labelling; the flip definition allows us to easily see that this labelling also satisfies the interpolating condition.
\end{example}

\begin{theorem}{\cite{mcnamara2006poset}}
	If $\LL$ is a left modular lattice, then a left modular labelling $\gamma$ is an interpolating labelling.  Conversely, if a lattice $\LL$ admits an interpolating labelling, it is left modular.
	\label{thm:interpolating}
\end{theorem}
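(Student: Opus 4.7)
The plan is to prove both directions of the biconditional separately. For the forward direction, I fix a left modular maximal chain $\hat{0}=x_0 \lessdot x_1 \lessdot \cdots \lessdot x_n=\hat{1}$ of $\LL$ and its associated left modular labelling $\gamma$. Given a length-$2$ chain $\x \lessdot \y \lessdot z$, set $a := \gamma(\x \lessdot \y)$ and $b := \gamma(\y \lessdot z)$. The case $a<b$ is immediate from the EL-property, so I would focus on $a>b$. Characterization (2) of Theorem~\ref{labelling} gives $b=\min\{i : \y \vee (x_i \wedge z)=z\}$. I would set $\y' := \x \vee (x_b \wedge z)$; by left modularity of $x_b$ applied to the pair $\x \leq z$, this rewrites as $(\x \vee x_b) \wedge z$. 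Minimality of $b$, combined with $a>b$, forces $\x < \y' \leq z$ and $\gamma(\x \lessdot \y')=b$ (reading off characterization (2) at the cover $\x \lessdot \y'$). The dual characterization (3) identifies the meet-irreducible $m$ witnessing $a=\beta_\MM(m)$ on $\x \lessdot \y$ as also witnessing label $a$ for the top edge of any unrefinable chain from $\y'$ up to $z$; EL-uniqueness of the increasing chain in $[\x,z]$ then packages this into exactly the interpolating conclusion.

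For the converse direction, I would take the unique $\lambda$-increasing maximal chain $x_0 \lessdot x_1 \lessdot \cdots \lessdot x_n$ of $\LL$ and show each $x_i$ is left modular. By Lemma~\ref{lem:left_mod_cover} it suffices to verify, for every cover $\y \lessdot z$, the equality $(\y \vee x_i) \wedge z = \y \vee (x_i \wedge z)$. The modular inequality already supplies the $\geq$ direction. For the reverse containment, I would fix an unrefinable chain from $\y \vee (x_i \wedge z)$ up to $(\y \vee x_i) \wedge z$, read off its label sequence, and iteratively apply the interpolating condition to any consecutive pair whose labels are out of order. The interpolating swap combined with EL-uniqueness would drive the chain toward a canonical form whose labels are either uniformly $\leq i$ or uniformly $>i$; tracking how $x_i$ sits on the increasing chain of $\LL$ would then contradict either possibility unless the two endpoints coincide.

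The main obstacle is the converse. Even with interpolating and EL hypotheses in hand, it is delicate to rule out chains between $\y \vee (x_i \wedge z)$ and $(\y \vee x_i) \wedge z$ whose labels straddle the threshold $i$. The bookkeeping of label patterns along chains, together with the interaction between EL-uniqueness and the interpolating ``swap'' that lets one reorder adjacent labels at the cost of changing the intermediate vertex, is where the technical effort concentrates; by contrast, the forward direction is essentially a local computation on one length-$2$ interval, powered by left modularity of the single element $x_b$.
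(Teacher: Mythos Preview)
The paper does not prove Theorem~\ref{thm:interpolating}; it merely states the result and cites \cite{mcnamara2006poset}. There is therefore no proof in the paper to compare your proposal against.

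That said, your sketch is broadly in line with the argument in the cited reference. For the forward direction, the outline is right in spirit, but one step is not justified as written: you assert that $y':=x\vee(x_b\wedge z)$ satisfies $x\lessdot y'$, i.e.\ that it is a \emph{cover}, so that $\gamma(x\lessdot y')$ is defined. Left modularity of $x_b$ together with $b=\min\{i:y\vee(x_i\wedge z)=z\}$ gives $x<y'\leq z$, but the covering relation needs an additional argument (in the reference this is handled by analysing how the left modular chain slices the interval $[x,z]$). Similarly, your claim that the top edge of the increasing chain carries label $a$ via characterization~(3) is the right idea but is only a gesture; one must actually identify a meet-irreducible $m$ with $\beta_\MM(m)=a$ and $z\wedge m$ equal to the penultimate element of the increasing chain.

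For the converse, you correctly identify the difficulty, but the sketch does not yet constitute a proof. The ``iteratively apply the interpolating swap'' idea is the right mechanism, but the termination argument and the precise way the threshold $i$ interacts with the label sequence require genuine work; in particular one must verify that every label on the unique increasing chain of $[\hat 0,\hat 1]$ is used exactly once, so that the comparison to $i$ is meaningful. As you acknowledge, this is where the substance lies, and your proposal does not supply it.
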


\section{Trim Lattices}
\label{sec:trim}
We have seen in~\Cref{sec:extremal,sec:left_modular} that both extremal and left modular lattices can be thought of as different generalizations of distributive lattices.  A lattice that is \emph{both} extremal \emph{and} left modular should therefore have even more in common with distributive lattices.  We note that not all extremal lattices are left modular (such as the lattice on the left of~\Cref{fig:extremal_no_global}), and that not all left modular lattices are extremal (such as the lattice on the right of~\Cref{fig:extremal_no_global}).

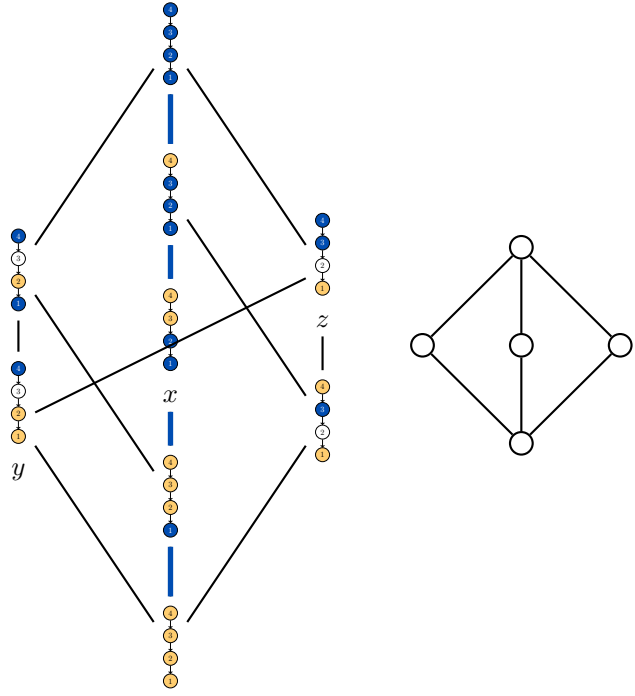
\begin{figure}[htbp]
	\raisebox{-0.5\height}{\begin{tikzpicture}[scale=2]
\node (a) [align=center] at (0,0) {\scalebox{0.3}{\begin{tikzpicture}
\node (1) [circle,thick,draw,fill=meet] at (0,0) {1};
\node (2) [circle,thick,draw,fill=meet] at (0,1) {2};
\node (3) [circle,thick,draw,fill=meet] at (0,2) {3};
\node (4) [circle,thick,draw,fill=meet] at (0,3) {4};
\draw[->,thick] (2) to (1);
\draw[->,thick] (3) to (2);
\draw[->,thick] (4) to (3);
\end{tikzpicture}}};
\node (b) [align=center] at (-1,1.5) {\scalebox{0.3}{\begin{tikzpicture}
\node (1) [circle,thick,draw,fill=meet] at (0,0) {1};
\node (2) [circle,thick,draw,fill=meet] at (0,1) {2};
\node (3) [circle,thick,draw,fill=white] at (0,2) {3};
\node (4) [circle,thick,draw,fill=join,text=white] at (0,3) {4};
\draw[->,thick] (2) to (1);
\draw[->,thick] (3) to (2);
\draw[->,thick] (4) to (3);
\end{tikzpicture}} \\ $y$};
\node (c) [align=center] at (0,1) {\scalebox{0.3}{\begin{tikzpicture}
\node (1) [circle,thick,draw,fill=join,text=white] at (0,0) {1};
\node (2) [circle,thick,draw,fill=meet] at (0,1) {2};
\node (3) [circle,thick,draw,fill=meet] at (0,2) {3};
\node (4) [circle,thick,draw,fill=meet] at (0,3) {4};
\draw[->,thick] (2) to (1);
\draw[->,thick] (3) to (2);
\draw[->,thick] (4) to (3);
\end{tikzpicture}}};
\node (d) [align=center] at (1,1.5) {\scalebox{0.3}{\begin{tikzpicture}
\node (1) [circle,thick,draw,fill=meet] at (0,0) {1};
\node (2) [circle,thick,draw,fill=white] at (0,1) {2};
\node (3) [circle,thick,draw,fill=join,text=white] at (0,2) {3};
\node (4) [circle,thick,draw,fill=meet] at (0,3) {4};
\draw[->,thick] (2) to (1);
\draw[->,thick] (3) to (2);
\draw[->,thick] (4) to (3);
\end{tikzpicture}}};
\node (e) [align=center] at (-1,2.5) {\scalebox{0.3}{\begin{tikzpicture}
\node (1) [circle,thick,draw,fill=join,text=white] at (0,0) {1};
\node (2) [circle,thick,draw,fill=meet] at (0,1) {2};
\node (3) [circle,thick,draw,fill=white] at (0,2) {3};
\node (4) [circle,thick,draw,fill=join,text=white] at (0,3) {4};
\draw[->,thick] (2) to (1);
\draw[->,thick] (3) to (2);
\draw[->,thick] (4) to (3);
\end{tikzpicture}}};
\node (f) [align=center] at (0,2) {\scalebox{0.3}{\begin{tikzpicture}
\node (1) [circle,thick,draw,fill=join,text=white] at (0,0) {1};
\node (2) [circle,thick,draw,fill=join,text=white] at (0,1) {2};
\node (3) [circle,thick,draw,fill=meet] at (0,2) {3};
\node (4) [circle,thick,draw,fill=meet] at (0,3) {4};
\draw[->,thick] (2) to (1);
\draw[->,thick] (3) to (2);
\draw[->,thick] (4) to (3);
\end{tikzpicture}} \\ $x$};
\node (g) [align=center] at (1,2.5) {\scalebox{0.3}{\begin{tikzpicture}
\node (1) [circle,thick,draw,fill=meet] at (0,0) {1};
\node (2) [circle,thick,draw,fill=white] at (0,1) {2};
\node (3) [circle,thick,draw,fill=join,text=white] at (0,2) {3};
\node (4) [circle,thick,draw,fill=join,text=white] at (0,3) {4};
\draw[->,thick] (2) to (1);
\draw[->,thick] (3) to (2);
\draw[->,thick] (4) to (3);
\end{tikzpicture}} \\ $z$};
\node (h) [align=center] at (0,3) {\scalebox{0.3}{\begin{tikzpicture}
\node (1) [circle,thick,draw,fill=join,text=white] at (0,0) {1};
\node (2) [circle,thick,draw,fill=join,text=white] at (0,1) {2};
\node (3) [circle,thick,draw,fill=join,text=white] at (0,2) {3};
\node (4) [circle,thick,draw,fill=meet] at (0,3) {4};
\draw[->,thick] (2) to (1);
\draw[->,thick] (3) to (2);
\draw[->,thick] (4) to (3);
\end{tikzpicture}}};
\node (i) [align=center] at (0,4) {\scalebox{0.3}{\begin{tikzpicture}
\node (1) [circle,thick,draw,fill=join,text=white] at (0,0) {1};
\node (2) [circle,thick,draw,fill=join,text=white] at (0,1) {2};
\node (3) [circle,thick,draw,fill=join,text=white] at (0,2) {3};
\node (4) [circle,thick,draw,fill=join,text=white] at (0,3) {4};
\draw[->,thick] (2) to (1);
\draw[->,thick] (3) to (2);
\draw[->,thick] (4) to (3);
\end{tikzpicture}}};
\draw[-,thick] (a) to  (b) to  (e) to  (i);
\draw[-,thick] (a) to  (c) to  (e);
\draw[-,thick] (b) to (g) to (i);
\draw[-,thick] (a) to (d) to (h);
\draw[-,thick] (c) to (f) to (h) to (i);
\draw[-,thick] (d) to  (g);
\draw[-,thick,color=join,line width=2pt] (a) to (c) to  (f) to (h) to (i);
\end{tikzpicture}}
\hspace{2em}
\raisebox{-0.5\height}{\begin{tikzpicture}[scale=1.3]
\node (a) [circle,thick,minimum size=3pt,inner sep=3pt,draw,fill=white] at (0,0) {};
\node (b) [circle,thick,minimum size=3pt,inner sep=3pt,draw,fill=white]  at (-1,1) {};
\node (c) [circle,thick,minimum size=3pt,inner sep=3pt,draw,fill=white] at (0,1) {};
\node (d) [circle,thick,minimum size=3pt,inner sep=3pt,draw,fill=white]  at (1,1) {};
\node (e) [circle,thick,minimum size=3pt,inner sep=3pt,draw,fill=white]  at (0,2) {};
\draw[-,thick] (a) to (b) to (e);
\draw[-,thick] (a) to (c) to (e);
\draw[-,thick] (a) to (d) to (e);
\end{tikzpicture}}
\caption{On the left is an extremal lattice $\LL$ that is not left modular---the failure of the left-modularity of the element $x$ is witnessed by the elements marked $y$ and $z$, since $(y\vee x)\wedge z > y\vee(x\wedge z)$.  We can also see that $\LL$ is not trim using~\Cref{thm:overlapping}: the cover relation $y < z$ is non-overlapping, since $y_\MM \cap z_\JJ  =\{1,2\} \cap \{3,4\} = \emptyset$.  As $\LL$ is not trim, we observe that \Cref{cor:independent} fails---it has $9$ elements, but its Galois graph only has $8$ independent sets.  On the right is a left modular lattice that is not extremal.}
\label{fig:extremal_no_global}
\end{figure}

\begin{definition}[\cite{thomas2006analogue}]
A lattice $\LL$ is called \defn{trim} if it is both extremal and left modular.
\end{definition}

Examples of trim lattices include distributive lattices, finite Cambrian lattices \cite{IT}, the $m$-Cambrian lattices of~\cite{stump2015cataland}, as well as all finite intervals in arbitrary Cambrian lattices~\cite{muhle2016trimness} and the $m$-Tamari lattices of Bergeron and Pr{\'e}ville-Ratelle~\cite{bergeron2011higher,bergeron2012combinatorics}.

Let $\LL$ be a trim lattice.  We fix once and for all a choice of
left modular chain
\[\hat 0=\x_0 \lessdot \x_1 \lessdot \cdots \lessdot \x_n=\hat 1.\]
Since $\LL$ is extremal and left modular, its elements inherit a representation as maximal orthogonal pairs from~\Cref{sec:extremal} and $\LL$ inherits the left modular labelling $\gamma$ from~\Cref{labelling}.  Note that $\beta_\JJ$ and $\beta_\MM$ are now \emph{bijections} to $[n]$, which is partially ordered as the Galois poset $P(\LL)$.

\subsection{Overlapping maximal orthogonal pairs}

We give a useful characterization of trim lattices among extremal lattices using maximal orthogonal pairs.
\begin{definition}
\label{def:overlapping}
A relation $y<z$ in an extremal lattice $\LL$ is \defn{overlapping} if \[y_\MM\cap z_\JJ \neq \emptyset.\]
\end{definition}

\begin{example}
For the extremal (but not left modular) lattice on the left of~\Cref{fig:extremal_no_global}, observe that $y < z$ is non-overlapping, since $y_\MM \cap z_\JJ  =\{1,2\} \cap \{3,4\} = \emptyset$.
\label{ex:nonoverlapping}
\end{example}
As the following theorem shows,~\Cref{ex:nonoverlapping} is representative of the only way extremal lattices can fail to be trim.

\begin{theorem}
An extremal lattice $\LL$ is trim if and only if every relation is overlapping if and only if every cover relation is overlapping.
\label{thm:overlapping}
\end{theorem}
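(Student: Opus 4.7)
The plan is to cycle through the three properties. The implication that every relation being overlapping implies every cover is overlapping is immediate, and for the reverse I would take $y<z$, pick any cover $y \lessdot w$ with $w \leq z$ in a maximal chain from $y$ to $z$, apply the cover-overlap hypothesis to produce $k \in y_\MM \cap w_\JJ$, and observe that $w \leq z$ forces $w_\JJ \subseteq z_\JJ$, so $k \in y_\MM \cap z_\JJ$.

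For the direction trim $\Rightarrow$ every cover overlapping, I would exploit the left modular labelling $\gamma$ of~\Cref{labelling}. In the trim setting $\beta_\JJ$ and $\beta_\MM$ are bijections to $[n]$, so clause (1) of~\Cref{labelling} applied to a cover $y \lessdot z$ with label $i = \gamma(y \lessdot z)$ yields some $j$ with $\beta_\JJ(j)=i$ and $y \vee j = z$, forcing $j=j_i$; hence $j_i \leq z$, i.e., $i \in z_\JJ$. Dually, clause (3) produces $m_i$ with $z \wedge m_i = y$, so $m_i \geq y$ and $i \in y_\MM$. Thus $i$ witnesses the overlap of the cover $y \lessdot z$.

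The main obstacle is the converse: an extremal lattice in which every cover is overlapping must be left modular. I would show that each $x_i$ in the fixed maximal chain is a left modular element, and by~\Cref{lem:left_mod_cover} I only need to verify the modular equality on covers $y \lessdot z$. Since $[y,z]$ is a two-element interval, both $(y\vee x_i)\wedge z$ and $y\vee(x_i\wedge z)$ lie in $\{y,z\}$, and the modular inequality forces them to agree unless simultaneously $x_i\wedge z \leq y$ and $y\vee x_i \geq z$. I would rule out this bad case using the identities $x_i = j_1\vee\cdots\vee j_i = m_{i+1}\wedge\cdots\wedge m_n$ from~\Cref{sec:extremal} together with an overlap witness $k \in y_\MM \cap z_\JJ$. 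If $k \leq i$, then $j_k \leq x_i$ and $j_k \leq z$ give $j_k \leq x_i\wedge z \leq y$, contradicting $k \in y_\MM$ (since $y_\JJ \cap y_\MM = \emptyset$ for a maximal orthogonal pair). If $k > i$, then $m_k \geq x_i$ and $m_k \geq y$ give $m_k \geq y\vee x_i \geq z$, contradicting $k \in z_\JJ$ by the dual disjointness $z_\JJ \cap z_\MM = \emptyset$. This dichotomy, which essentially uses the disjointness of the two coordinates of a maximal orthogonal pair and the indexing provided by the maximal chain, is the heart of the argument.
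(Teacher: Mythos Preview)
Your argument is correct and follows essentially the same route as the paper: use the left modular labelling from~\Cref{labelling} to produce an overlap witness in one direction, and verify left modularity of the chain elements $x_i$ on covers via~\Cref{lem:left_mod_cover} in the other. Your converse is slightly more streamlined than the paper's: rather than first proving that $y_\MM\cap z_\JJ$ is a singleton $\{i\}$ with $i=\min(z_\JJ\setminus y_\JJ)=\max(y_\MM\setminus z_\MM)$ (by constructing an intermediate element when $p<q$), you take \emph{any} witness $k$ and use the disjointness $y_\JJ\cap y_\MM=\emptyset=z_\JJ\cap z_\MM$ to rule out the bad case directly. The paper's extra step pays off later, since the singleton statement is exactly~\Cref{cor:contains}; your version proves the theorem but does not immediately yield that corollary.
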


\begin{proof}
We first show that all relations $y<z$ are overlapping in a trim lattice $\LL$.  Representing elements of $\LL$ by their maximal orthogonal pairs, we may reduce to the case that $(X,Y) \lessdot (X',Y')$.  Using~\Cref{labelling}, the label of this cover is an element of $X'\setminus X$ by the labelling $\beta_\JJ$, and an element of $Y \setminus Y'$ by the labelling $\beta_\MM$.  The two sets must therefore intersect.

We now show that if every pair $y<z$ is overlapping, then $\LL$ is left modular.  Note that since $\LL$ is extremal, we have already fixed a maximal-length chain $\hat{0}=x_0\lessdot x_1 \lessdot \cdots \lessdot x_n = \hat{1}$.  We will show that this chain consists of left modular elements $x_k$; by~\Cref{lem:left_mod_cover}, it suffices to check the left modularity condition on cover relations.

We first assign a label to a cover relation.   For a cover $y \lessdot z$, we claim that $y_\MM \cap z_\JJ  = \{i\}$ and $\min(z_\JJ \setminus y_\JJ)=\max(y_\MM\setminus z_\MM)=i$.  For otherwise, let $p=\min(z_\JJ\setminus y_\JJ)$ and $q=\max(y_\MM \setminus z_\MM)$ and set $y'_\JJ=y_\JJ \cup \{p\}$ $z'_\MM=z_\MM \cup \{q\}$.  By assumption $p<q$, and so $(y'_\JJ,z'_\MM)$ is an orthogonal pair which is contained in some maximal orthogonal pair $(y''_\JJ,z''_\MM)$.  But this maximal orthogonal pair $(y''_\JJ,z''_\MM)$ is now properly between $y$ and $z$, contradicting that $y\lessdot z$.  Therefore, $z_\JJ \cap y_\MM = \{i\}$ (and $i$ turns out to be the correct left modular labelling of the cover relation).

To show $x_k$ is left modular, we now show that \begin{align*} y \vee (x_k \wedge z) &= (y \vee x_k) \wedge z = y \text{ if } k<i \text{ and}  \\ y \vee (x_k \wedge z) &= (y \vee x_k) \wedge z = z \text{ if } k \geq i.\end{align*}  It suffices to check that $y \vee (x_i \wedge z) = z$ and $(y \vee x_{i-1}) \wedge z = y$.  For the first, it is clear that $(x_i \wedge z)_\JJ$ contains $i$, and the desired claim follows.  The other follows by duality.
\end{proof}

Overlapping cover relations allow us to recover the edge labellings of a trim lattice from the representation of elements as maximal orthogonal pairs.

\begin{corollary}
\label{cor:contains}
For $\LL$ a trim lattice, the left modular label assigned to a cover relation $\gamma(y \lessdot z)$ is the unique element in the intersection $y_\MM \cap z_\JJ$.
\end{corollary}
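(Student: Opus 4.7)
The plan is to observe that essentially all the work has been done in the proof of \Cref{thm:overlapping}, and it remains only to match up the index extracted there with the left modular labelling from \Cref{labelling}.

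First, I would recall from the proof of \Cref{thm:overlapping} the stronger statement already established: for any cover $y \lessdot z$ in a trim lattice, not only is $y_\MM \cap z_\JJ$ nonempty, it is a singleton $\{i\}$, and this same $i$ equals both $\min(z_\JJ \setminus y_\JJ)$ and $\max(y_\MM \setminus z_\MM)$. So the content to verify is simply that $\gamma(y \lessdot z) = i$.

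Next I would invoke definition (1) from \Cref{labelling}, namely $\gamma(y \lessdot z) = \min\{\beta_\JJ(j) : j \in \JJ,\ y \vee j = z\}$. Under the correspondence coming from the fixed left modular chain, the join-irreducibles are indexed as $j_1, \dots, j_n$ with $\beta_\JJ(j_k) = k$, and $j_k \leq w$ iff $k \in w_\JJ$ for any $w \in \LL$. For a cover relation $y \lessdot z$, the condition $y \vee j_k = z$ is equivalent to $j_k \leq z$ and $j_k \not\leq y$ (the covering hypothesis forces strict inequality $y \vee j_k > y$ to land at $z$), i.e.\ to $k \in z_\JJ \setminus y_\JJ$. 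Therefore
\[
\gamma(y \lessdot z) = \min\{k : k \in z_\JJ \setminus y_\JJ\} = \min(z_\JJ \setminus y_\JJ),
\]
which by the first paragraph is precisely $i$, the unique element of $y_\MM \cap z_\JJ$.

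There is no real obstacle, since \Cref{thm:overlapping}'s proof already pinned down the singleton structure of $y_\MM \cap z_\JJ$ and explicitly identified it with $\min(z_\JJ \setminus y_\JJ)$; the only minor thing to be careful about is to justify the implication ``$j_k \leq z$, $j_k \not\leq y \Rightarrow y \vee j_k = z$'' cleanly from the covering assumption, and to remember that $\beta_\JJ$ is a bijection in the trim setting so that $\min$ over labels is the same as $\min$ over indices. One could equivalently use description (3) of \Cref{labelling} together with $\max(y_\MM \setminus z_\MM) = i$ to obtain the same conclusion, confirming self-consistency.
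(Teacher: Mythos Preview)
Your proposal is correct and follows essentially the same route as the paper. The paper leaves the corollary without an explicit proof, since the two ingredients you use---that $y_\MM\cap z_\JJ=\{i\}$ with $i=\min(z_\JJ\setminus y_\JJ)=\max(y_\MM\setminus z_\MM)$, and that the left modular label lies in $(z_\JJ\setminus y_\JJ)\cap(y_\MM\setminus z_\MM)$---are both already established in the proof of \Cref{thm:overlapping}; your only addition is to unpack \Cref{labelling}(1) explicitly to identify the label as $\min(z_\JJ\setminus y_\JJ)$, which is a harmless variant of the paper's observation that the label lies in that set.
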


\subsection{Structural properties of trim lattices}
\label{sec:structure}
We recall some structural properties of trim lattices.  Of particular importance is that intervals in trim lattices inherit trimness (in contrast, intervals of extremal lattices are not usually again extremal lattices---indeed, Markowsky shows that any lattice can be embedded as an interval in an extremal lattice \cite[Theorem 14]{markowsky1992primes} ).

\begin{theorem}[{\cite[Theorem 1]{thomas2006analogue}}]\label{thm:intervals}
If $\LL$ is trim, then so is any interval of $\LL$.
\end{theorem}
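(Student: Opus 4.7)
The plan is to show separately that any interval $[a,b] \subseteq \LL$ inherits both left modularity and extremality.

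For left modularity, I would fix the left modular chain $\hat 0 = x_0 \lessdot \cdots \lessdot x_n = \hat 1$ of $\LL$ and project it to $[a,b]$ by defining $y_i := (a \vee x_i) \wedge b$. To verify that each $y_i$ is left modular in $[a,b]$, take $u \leq v$ in $[a,b]$. Applying the left modularity of $x_i$ with the pair $a \leq b$ gives $y_i = a \vee (x_i \wedge b)$; then applying it twice more (first for the pair $u \leq b$ and then for $u \leq v$) yields
\[
(u \vee y_i) \wedge v = (u \vee (x_i \wedge b)) \wedge v = (u \vee x_i) \wedge v
\quad\text{and}\quad
u \vee (y_i \wedge v) = u \vee (x_i \wedge v) = (u \vee x_i) \wedge v,
\]
where we repeatedly absorb $a$ into $u$. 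Collapsing duplicates in the weakly increasing sequence $a = y_0 \leq \cdots \leq y_n = b$ produces a chain $a = z_0 < z_1 < \cdots < z_\ell = b$ of left modular elements of $[a,b]$.

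For extremality, I would pass to the maximal orthogonal pair representation of \Cref{thm:extreme_representation}, writing $a = (A,B)$ and $b = (C,D)$. By \Cref{cor:contains}, every cover of $[a,b]$ carries a label in $I := (B \cap C) \setminus (A \cup D)$, and along any saturated chain from $a$ to $b$ the labels are pairwise distinct (as a consequence of the interpolating property in \Cref{thm:interpolating}). Thus the length of $[a,b]$ is at most $|I|$. On the other hand, analyzing when $y_{i-1} < y_i$ in the projected chain via \Cref{thm:overlapping} shows that the strict jumps occur precisely at indices $i \in I$, so $\ell \geq |I|$, hence $\ell = |I|$ and the chain $(z_k)$ is in fact maximal. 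To complete extremality, I would establish that the join- and meet-irreducibles of $[a,b]$ are in bijection with $I$: for each $i \in I$, one locates the unique join-irreducible of $[a,b]$ whose lower cover carries label $i$ as the minimal element of $[a,b]$ whose orthogonal pair has first component containing $A \cup \{i\}$, and dually for meet-irreducibles.

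The main obstacle will be the counting step of the extremal argument, namely confirming that the labels in $I$ are in bijection with both the join-irreducibles and the meet-irreducibles of $[a,b]$. This requires carefully exploiting the overlapping characterization of \Cref{thm:overlapping} to rule out ``extra'' irreducibles and to ensure each label in $I$ is realized exactly once as the label of a cover below a join-irreducible (resp.\ above a meet-irreducible) of $[a,b]$. A secondary subtlety is the intermediate claim that the jumps of the projection $y_i$ are indexed precisely by $I$ rather than by the a priori larger set of indices along the chosen chain of $\LL$; this too depends on overlapping, applied to the cover $x_{i-1} \lessdot x_i$ of label $i$.
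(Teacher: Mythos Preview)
The paper does not prove this theorem; it is quoted from \cite{thomas2006analogue}. Your outline is sound and close in spirit to the original argument (as one can reconstruct from the proof of \Cref{lem:recursion}, which explicitly ``follows the proof of \cite[Theorem 1]{thomas2006analogue}''): project the left modular chain into $[a,b]$, then identify the irreducibles of $[a,b]$ as $a\vee j_i$ (dually $m_i\wedge b$) for the relevant labels $i$.

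A few remarks that remove the obstacles you flag. First, your label set simplifies: since $(A,B)$ and $(C,D)$ are orthogonal pairs, $A\cap B=\emptyset=C\cap D$, so
\[
I=(B\cap C)\setminus(A\cup D)=B\cap C=a_\MM\cap b_\JJ.
\]
Second, the ``jumps'' direction you need is only one implication, and it is immediate: for $i\in I$ we have $a\leq m_i$ and $j_i\leq b$, hence $y_{i-1}\leq a\vee x_{i-1}\leq m_i$ so $j_i\not\leq y_{i-1}$, while $j_i\leq x_i\wedge b\leq y_i$. Thus the projected chain has at least $|I|$ strict steps; combined with your upper bound (distinct labels along any chain, all lying in $I$ by \Cref{cor:contains}), the length of $[a,b]$ equals $|I|$. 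You never need to verify directly that there are no jumps at $i\notin I$.

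Third, the join-irreducible count is not really an obstacle once phrased correctly. If $j'$ is join-irreducible in $[a,b]$ with unique lower cover $j'_*$ and label $i$, then $j_i\leq j'$ and $j_i\not\leq j'_*$; since $a\vee j_i\in[a,b]$ and $a\vee j_i\leq j'$ but $a\vee j_i\not\leq j'_*$, join-irreducibility forces $j'=a\vee j_i$. This gives an injection from join-irreducibles of $[a,b]$ into $I$. Combined with the general inequality $\#\text{(join-irreducibles)}\geq\text{length}=|I|$, you get equality; dually for meet-irreducibles. So no separate existence argument for ``the join-irreducible with label $i$'' is needed.

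In short: your plan works, and the two subtleties you anticipated dissolve once you note $I=a_\MM\cap b_\JJ$ and argue by the squeeze $\text{length}\leq|I|\leq\#\JJ_{[a,b]}\leq|I|$ rather than by direct bijection.
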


Recall that the spine of a extremal lattice $\LL$ is the distributive lattice consisting of all elements lying on maximal-length chains of $\LL$.  For trim lattices, left modularity provides an alternative description of the spine.

\begin{theorem}\label{thm:spine}
An element of a trim lattice $\LL$ is left modular if and only if it lies on the spine.
\end{theorem}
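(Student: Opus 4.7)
The plan is to prove both directions using the representation of $\LL$ by maximal orthogonal pairs (\Cref{thm:extreme_representation}) together with the overlapping characterization of trim lattices (\Cref{thm:overlapping}).

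For the forward implication, suppose $x$ lies on the spine, so $x$ sits on some maximal-length chain $\hat 0 = y_0 \lessdot y_1 \lessdot \cdots \lessdot y_n = \hat 1$. I will observe that the argument in the proof of \Cref{thm:overlapping} which exhibits the fixed maximal-length chain as a chain of left modular elements uses only two things: that the chain has maximal length, and that every cover relation is overlapping. Both of these hold for the chain $(y_i)$ in the trim lattice $\LL$, so that argument applies verbatim to $(y_i)$ and shows it is a left modular chain; in particular $x$ is left modular.

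For the backward implication, suppose $x \in \LL$ is left modular. I will analyze the two sequences obtained by meeting and joining $x$ with the fixed left modular chain $(x_i)_{i=0}^n$:
\[\hat 0 = x \wedge x_0 \leq x \wedge x_1 \leq \cdots \leq x \wedge x_n = x,\qquad x = x \vee x_0 \leq \cdots \leq x \vee x_n = \hat 1.\]
At each step $k$, left modularity of $x$ applied to $x_{k-1}\leq x_k$ gives the identity $(x_{k-1}\vee x)\wedge x_k = x_{k-1}\vee(x\wedge x_k)$. A short case analysis using this identity and the cover relation $x_{k-1}\lessdot x_k$ will show that at each step $k$ exactly one of the following holds:
\begin{enumerate}
\item[(A)] $x\wedge x_{k-1} < x\wedge x_k$ and $x\vee x_{k-1} = x\vee x_k$;
\item[(B)] $x\wedge x_{k-1} = x\wedge x_k$ and $x\vee x_{k-1} < x\vee x_k$.
\end{enumerate}
Indeed, if both the meet and the join were to stay constant, then $u := x\vee x_{k-1} = x\vee x_k$ satisfies $u\wedge x_k = x_k$, while the identity forces $u\wedge x_k = x_{k-1}$, contradicting $x_{k-1}\lessdot x_k$; conversely, if both grew strictly, then $(x_{k-1}\vee x)\not\geq x_k$, so the cover forces $(x_{k-1}\vee x)\wedge x_k = x_{k-1}$, and the identity collapses to $x\wedge x_{k-1} = x\wedge x_k$, contradicting that the meet grew.

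Letting $|A|$ and $|B|$ count the number of steps of each type, we have $|A|+|B|=n$. The meet-chain, after collapsing repeats, gives a chain in $[\hat 0,x]$ of length $|A|$, and the join-chain gives a chain in $[x,\hat 1]$ of length $|B|$. Concatenating produces a chain from $\hat 0$ through $x$ to $\hat 1$ of length at least $|A|+|B| = n$. Since $\LL$ has length $n$, this chain has length exactly $n$ and hence is a maximal-length chain containing $x$, so $x$ lies on the spine. The main obstacle is the dichotomy between (A) and (B); the remaining bookkeeping with lengths is routine.
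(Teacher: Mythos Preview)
Your proof is correct. The forward direction matches the paper's (both defer to the argument establishing \Cref{thm:overlapping}, which indeed only uses that the chain has maximal length and that covers are overlapping; by \Cref{cor:galois_unique} any maximal-length chain can play the role of the fixed one).

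Your backward direction, however, is genuinely different from the paper's. The paper argues the contrapositive via the Galois-graph representation: if $x$ is off the spine, there is an index $i\notin x_\JJ\cup x_\MM$, and then the pair $x_{i-1}\leq x_i$ explicitly witnesses the failure of left modularity for $x$. Your argument is instead a direct chain-counting computation: the left-modular identity forces the dichotomy (A)/(B) at each step of the fixed chain, and concatenating the meet-sequence below $x$ with the join-sequence above $x$ yields a chain of length $n$ through $x$. This is more elementary --- it never touches the extremal/Galois structure and in fact proves the general statement that in any finite lattice a left modular element lies on a chain of maximal length. The paper's route, by contrast, produces an explicit obstruction pair $(x_{i-1},x_i)$ tied to the missing index $i$, which is useful elsewhere when one wants to locate precisely where left modularity breaks down.
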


\begin{proof}
That the elements of the spine of a trim lattice $\LL$ are all left modular follows from the proof of~\Cref{thm:overlapping} (and appears in~\cite[Lemma 5]{thomas2006analogue}).  We now show that all elements not on the spine are not left modular.  Suppose $x$ is not on the spine, so that there is some $i$ missing from $x_\JJ \cup x_\MM$.  We will show that $x$ does not satisfy left modularity with respect to the pair $(x_{i-1},x_i)$.  First, we check that $x_i \wedge (x \vee x_{i-1}) = x_i$.  For $k\leq i$, $k \not \in (x \vee x_{i-1})_\MM$ and so $\{1,2,\ldots,i\} \subseteq (x \vee x_{i-1})_\JJ$.  In particular, $x \vee x_{i-1} \geq x_i$, and so meeting with $x_i$ gives $x_i$.  The dual argument now gives that $(x_i \wedge x) \vee x_{i-1} = x_{i-1}$, which implies that $x$ is not left modular.
\end{proof}

By~\Cref{cor:galois_unique}, any choice of maximal-length chain in an extremal lattice gives an isomorphic Galois graph.  The following proposition supplements this result for trim lattices by showing that any choice of maximal-length chain associates the same join-irreducible element (and hence the same meet-irreducible element) to a cover relation of $\LL$.

\begin{proposition}
Let $(x_i)_{i=0}^n$ and $(x'_i)_{i=0}^n$ be two chains of maximal length in a trim lattice $\LL$, inducing maps $\beta_\JJ$, $\beta'_\JJ$ to $[n]$ and  left modular labellings $\gamma$ and $\gamma'$.  Suppose $\gamma(y \lessdot z)=\beta_\JJ(a)$, and $\gamma'(y \lessdot z)=\beta'_\JJ(b)$ with $a,b \in \JJ$.  Then $a=b$.
\label{thm:trim_labels}
\end{proposition}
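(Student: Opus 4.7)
The plan is to extract from \Cref{cor:contains} a chain-independent characterization of the join-irreducible $a$, and then observe that the same characterization, applied with the primed chain, picks out $b$.

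First I would set up the canonical bijection $\pi \colon \JJ \to \MM$ guaranteed by \Cref{cor:galois_unique}: for each $i \in [n]$, the join-irreducible $j_i$ is paired with the meet-irreducible $m_i$, and this pairing does not depend on the choice of maximal-length chain. In particular, $\beta_\JJ(j) = \beta_\MM(\pi(j))$ and $\beta'_\JJ(j) = \beta'_\MM(\pi(j))$ for every $j \in \JJ$. Next, by \Cref{cor:contains}, the label $\gamma(y \lessdot z) = \beta_\JJ(a)$ is the unique element $i$ of the intersection $y_\MM \cap z_\JJ$ (computed with respect to the first chain). Unpacking each membership in terms of elements of $\LL$, the condition $i \in z_\JJ$ is equivalent to $a \le z$, while $i \in y_\MM$ is equivalent to $\pi(a) \ge y$. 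Combined with the uniqueness provided by \Cref{cor:contains}, this characterizes $a$ as
\[a = \text{the unique } j \in \JJ \text{ with } j \le z \text{ and } \pi(j) \ge y.\]

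Crucially, this description makes no reference to the chain $(x_i)$, since $\pi$ is intrinsic to $\LL$. Repeating the argument with the primed chain $(x'_i)$ and the labellings $\beta'_\JJ, \beta'_\MM, \gamma'$, \Cref{cor:contains} identifies $b$ as the unique $j \in \JJ$ with $j \le z$ and $\pi(j) \ge y$, which is the identical characterization. Hence $a = b$.

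I do not foresee any substantive obstacle; the argument is essentially a bookkeeping exercise that separates the intrinsic objects of $\LL$ (the sets $\JJ$ and $\MM$, and the canonical bijection $\pi$ between them) from the chain-dependent integer labels in $[n]$. The two ingredients \Cref{cor:galois_unique} (that $\pi$ is intrinsic) and \Cref{cor:contains} (that the left modular label of a cover in a trim lattice is the unique element of $y_\MM \cap z_\JJ$) do all of the work, and the trimness hypothesis enters only through the uniqueness coming from the overlapping property of \Cref{thm:overlapping}.
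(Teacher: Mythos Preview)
Your argument is correct, and it is genuinely different from the paper's own proof.  The paper argues directly from characterization (2) in \Cref{labelling}: it reduces to the case where the two maximal chains differ in a single position $j$, then compares the minimal index $i$ with $y\vee x_i\wedge z=z$ to the corresponding index $k$ for the primed chain, and checks by hand that the swap of the $j$th and $(j{+}1)$st join-irreducibles is consistent with the labels.  By contrast, you extract from \Cref{cor:contains} the chain-free description
\[
a=\text{the unique } j\in\JJ \text{ with } j\le z \text{ and } \pi(j)\ge y,
\]
where $\pi:\JJ\to\MM$ is the canonical bijection, and then invoke \Cref{cor:galois_unique} to note that $\pi$ does not depend on the choice of maximal-length chain.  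This is slicker: it front-loads all the work into the overlapping characterization of trimness (\Cref{thm:overlapping}), and the proposition becomes an immediate corollary.  The paper's route has the modest advantage of being more self-contained (it does not need the full force of \Cref{cor:contains}), but your version makes the statement look inevitable rather than computational.
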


\begin{proof}
As in the proof of~\Cref{cor:galois_unique}, we may assume that the two chains differ in only one position (say, the $j$th), since any two chains in a distributive lattice differ by a sequence of such differences.  Let $y \vee x_i \wedge z = z$ and $y \vee x'_k \wedge z=z$ with $i$ and $k$ minimal in the chains.  If $x_i$ and $x'_k$ both occur in both chains, then $a=b$, and we are done.  So we may assume that one of the indices, say $i$, is equal to the position $j$ where the two chains differ.

Observe that $k$ cannot also be equal to $j$; otherwise
$y \vee x_{j-1} \wedge z = y \vee (x_j \wedge x'_j) \wedge z = z$, contradicting minimality of $j$.  In fact, we see that $k=j+1$ is the only possibility.  The ordered sequence of join-irreducibles associated to the two chains can only be different in the $j$th and $(j{+}1)$st spots, and they must differ, so they differ by exchanging the two join-irreducibles in those spots.  Thus the join-irreducibles corresponding to the $j$th step in chain $(x_i)_{i=0}^n$ and the $(j{+}1)$st step in the $(x'_i)_{i=0}^n$ chain are the same, as desired.
\end{proof}

Recall that a \defn{modular lattice} is a (finite) lattice that satisfies
\[y \wedge z \lessdot z \text{ and } y \wedge z \lessdot y \text{ if and only if } z \lessdot (y \vee z) \text{ and } y \lessdot (y \vee z).\]
Upper and lower semimodular lattices are defined by preserving only one direction of the implication.  Trim lattices satisfy a kind of demi-semimodularity.

  \begin{proposition}[{\cite[Theorem 6]{thomas2006analogue}}]\label{demi-semi} Let $\LL$
    be a trim lattice.
    Suppose that
  $x$ covers $y$ and $z$, and that $\gamma(y\lessdot x)>\gamma(z\lessdot x)$.
    Then $y \wedge z \lessdot z$; and also the dual statement when $y$ and
    $z$ cover $x$.
\end{proposition}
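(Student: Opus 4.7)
The plan is to argue by contradiction, combining the three characterizations of the left modular labelling from~\Cref{labelling} with the interpolating property of~\Cref{thm:interpolating}. Write $i=\gamma(y\lessdot x)$ and $k=\gamma(z\lessdot x)$, and let $j_\mu$ denote the unique join-irreducible with $\beta_\JJ(j_\mu)=\mu$. Characterization~(1) gives $y\vee j_i=x$ and $z\vee j_k=x$. Since $y\lessdot x$, any $j\in\JJ$ with $\beta_\JJ(j)<i$ satisfies $y\vee j=y$, and hence $j\leq y$. In particular $j_k\leq y$ (because $k<i$), while $z\vee j_k=x\neq z$ forces $j_k\not\leq z$. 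Finally, since $y,z\lessdot x$ are distinct covers, $y\vee z=x$.

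Suppose for contradiction that $y\wedge z\not\lessdot z$, and choose $v\lessdot z$ with $y\wedge z<v<z$. Let $\ell=\gamma(v\lessdot z)$ with associated join-irreducible $j_\ell$, so $v\vee j_\ell=z$. First I would rule out $\ell\leq k$: if $\ell<k$, then $\ell<i$ gives $j_\ell\leq y$, while $v\vee j_\ell=z$ gives $j_\ell\leq z$, so $j_\ell\leq y\wedge z\leq v$ contradicts $v\vee j_\ell>v$; and $\ell=k$ would make $j_\ell=j_k$, contradicting $j_k\not\leq z$. Therefore $\ell>k$, and the interpolating property applied to $v\lessdot z\lessdot x$ yields an increasing unrefinable chain $v=v_0\lessdot v_1\lessdot\cdots\lessdot v_r=x$ with $\gamma(v_0\lessdot v_1)=k$ and $\gamma(v_{r-1}\lessdot v_r)=\ell$. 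Characterization~(1) then gives $v_1=v\vee j_k$.

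To close out the contradiction, I would use left modularity of the $k$-th element $c_k$ of the fixed left modular chain (renamed to avoid clashing with the $x$ of the proposition). Since $c_k\wedge z\leq c_k\wedge x\leq y$ and $c_k\wedge z\leq z$, we have $c_k\wedge z\leq y\wedge z\leq v$, and the left modular identity applied with $p=v,\,q=z$ gives $(v\vee c_k)\wedge z=v\vee(c_k\wedge z)=v$. As $j_k\leq c_k$, this forces $v_1\wedge z=v$ and in particular $v_1\not\leq z$; similarly $v\not\leq y$ (from $v>y\wedge z$) gives $v_1\not\leq y$. My plan for the remaining step is to re-run the $\ell>k$ analysis at the top of the chain: noting $j_\ell\not\leq y$ (else $j_\ell\leq y\wedge z\leq v$ contradicts $v\vee j_\ell=z$), the minimality characterization of $i$ forces $y\vee j_\ell=x$ and $\ell\geq i$, after which the dual characterization~(3) via the identity $z\wedge m_i=x\wedge m_i\wedge z=y\wedge z$ would pin down $v_{r-1}$ in a way that conflicts with $v_{r-1}\geq v_1\not\leq y$.

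The most delicate point is this final identification. I expect it to follow either from distinctness of labels of downward covers at $x$ in a trim lattice (a consequence of the descriptive-labelling property that~\Cref{thm:main_thm} will eventually establish) or from an induction on the length of $\LL$ applied to the trim interval $[v,x]$, which is itself trim by~\Cref{thm:intervals}; the role of extremality enters through the bijectivity of $\beta_\JJ$ and $\beta_\MM$ on a trim lattice. The dual statement (for $y,z$ both covering $x$ from above) then follows by replacing $\LL$ with its order-dual, which is again trim.
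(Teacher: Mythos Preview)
The paper does not supply its own proof of this proposition; it is simply cited from \cite[Theorem~6]{thomas2006analogue}. So there is no in-paper argument to compare against, and I will assess your attempt on its own merits.

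Your setup is correct: the deductions $j_k\leq y$, $j_k\not\leq z$, the elimination of $\ell\leq k$, the use of the interpolating property to produce the increasing chain $v=v_0\lessdot\cdots\lessdot v_r=x$, the left-modularity computation giving $v_1\wedge z=v$, and the conclusion $\ell\geq i$ via $y\vee j_\ell=x$ are all fine. But the argument stops short of a contradiction. You explicitly leave the final step as a ``plan'' and label it ``the most delicate point''; as written, neither proposed completion works:

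\begin{itemize}
\item Appealing to the descriptive-labelling property (\Cref{thm:main_thm}) is circular: \Cref{demi-semi} is invoked in the proof of \Cref{lem:down}, which feeds into \Cref{ptwo}, which is half of \Cref{thm:main_thm}. (The narrower fact that distinct lower covers of $x$ carry distinct labels \emph{can} be obtained noncircularly from \Cref{cor:contains}: if $y'\neq y$ both satisfy $\gamma(y\lessdot x)=\gamma(y'\lessdot x)=i$, then $i\in y_\MM\cap y'_\MM\cap x_\JJ$, so $m_i\geq y\vee y'=x$, whence $i\in x_\JJ\cap x_\MM=\emptyset$. But even with $\ell>i$ in hand, you have not explained what contradiction this produces.)
\item The proposed induction on the trim interval $[v,x]$ does not reproduce the hypotheses: you proved $v\not\leq y$, so $y\notin[v,x]$, and the pair of covers $y,z\lessdot x$ on which the statement bears is no longer available inside the smaller lattice. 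You would need to specify exactly which pair of covers in $[v,x]$ receives the inductive hypothesis and why the conclusion there forces $y\wedge z\lessdot z$ in $\LL$.
\end{itemize}

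In short, the argument has a genuine gap at its closing step. Your remark that the dual statement follows by passing to the order-dual trim lattice is correct in principle, but of course it presupposes that the primal direction has been established.
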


We also have the following result, which says that, as for extremal lattices, the property of being a trim lattice is preserved under lattice quotients as defined in~\Cref{ssec:lattices}.
\begin{lemma}
Trim lattices are preserved under lattice quotients.
\end{lemma}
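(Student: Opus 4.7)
The plan is to leverage Lemma~\ref{lem:ext-quot} (extremality is preserved under quotients) so that only left modularity of the quotient remains to be established, and then to show that the projection $\pi : \LL \to \LL'$ carries the fixed left modular chain to a maximal left modular chain in $\LL'$.

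First I would observe that, by the very definition of a lattice congruence, the natural projection $\pi : \LL \to \LL'$ is a lattice homomorphism: $\pi(x \wedge y) = \pi(x) \wedge \pi(y)$ and $\pi(x \vee y) = \pi(x) \vee \pi(y)$. I would then prove the following preservation fact: if $x \in \LL$ is left modular in $\LL$, then $\pi(x)$ is left modular in $\LL'$. Given $y' \leq z'$ in $\LL'$, lift to $y \in \pi^{-1}(y')$ and $z \in \pi^{-1}(z')$, and replace $z$ by $y \vee z$; since $\pi(y \vee z) = y' \vee z' = z'$, this still lies in $\pi^{-1}(z')$, and now $y \leq z$ in $\LL$. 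Applying $\pi$ to the left modularity equation $(y \vee x) \wedge z = y \vee (x \wedge z)$ and using that $\pi$ commutes with meets and joins gives $(y' \vee \pi(x)) \wedge z' = y' \vee (\pi(x) \wedge z')$, as required.

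Next I would apply this to the fixed left modular chain $\hat 0 = x_0 \lessdot x_1 \lessdot \cdots \lessdot x_n = \hat 1$ of $\LL$. Its image $\pi(x_0) \leq \pi(x_1) \leq \cdots \leq \pi(x_n)$ is a chain in $\LL'$ all of whose elements are left modular (by the preservation fact above). Deleting repetitions produces a strict chain; by the counting argument inside the proof of Lemma~\ref{lem:ext-quot}, each collapse $x_{i-1} \equiv x_i$ corresponds to the loss of one join-irreducible, and the length of $\LL'$ equals $n$ minus the number of such collapses. Hence the deduplicated image is a maximal chain of left modular elements in $\LL'$, so $\LL'$ is left modular. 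Combined with extremality from Lemma~\ref{lem:ext-quot}, $\LL'$ is trim.

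The only delicate step is the preservation of left modularity under $\pi$, where one must be careful to lift the inequality $y' \leq z'$ to a genuine inequality in $\LL$ rather than to an arbitrary pair of preimages; the reduction via $z \mapsto y \vee z$ above is the simplest way to handle this. Everything else is a direct consequence of Lemma~\ref{lem:ext-quot} together with the fact that lattice congruences are, by definition, compatible with both meet and join.
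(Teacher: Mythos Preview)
Your proof is correct and follows exactly the same strategy as the paper: invoke Lemma~\ref{lem:ext-quot} for extremality and show separately that left modularity passes to quotients. The paper's own argument is a one-line assertion that ``the property of being a left modular lattice is clearly preserved under lattice quotients''; you have simply unpacked this ``clearly'' by proving that $\pi$ sends left modular elements to left modular elements (with the correct care about lifting $y'\leq z'$ to a comparable pair in $\LL$) and then using the counting from the proof of Lemma~\ref{lem:ext-quot} to see that the deduplicated image chain is maximal.
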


\begin{proof}
The property of being a left modular lattice is clearly preserved under lattice quotients, while
the corresponding statement for extremality is \Cref{lem:ext-quot}.
\end{proof}


\begin{remark}
There are three different ``quotients'' that can be naturally defined in this setting: a representation-theoretic, a lattice-theoretic, and a graph-theoretic quotient.  Given the lattice of torsion pairs of an algebra $A$ satisfying the hypotheses of \Cref{cor:rep_are_trim} (which is therefore, as we shall show, trim), we define a \defn{representation-theoretic quotient} by taking a quotient of $A$ by a two sided ideal, and considering the torsion pairs of the quotient algebra.  We can also take a lattice-theoretic quotient of $\LL$.  Finally, we define a \defn{graph-theoretic quotient} on the level of the Galois graph $G$ by deleting some subset of the vertices of $G$.  Then a representation-theoretic quotient induces a lattice-theoretic quotient \cite{DIRRT}, and a lattice-theoretic quotient induces a graph-theoretic quotient,
but there are lattice-theoretic quotients of trim lattices of torsion pairs which do not arise from representation-theoretic quotients, and there are graph-theoretic quotients which do not correspond to lattice theoretic quotients.
\end{remark}

\subsection{The trim recurrence}
\label{sec:decomposition}
We introduce a recursive decomposition of trim lattices that will underlie many of our subsequent proofs.  This decomposition is related to the recursive decompositions of~\cite[Theorem 15]{markowsky1992primes} and naturally generalizes the \defn{Cambrian recurrence} on Cambrian lattices.

\begin{lemma}
\label{lem:decomposition}
Let $\LL$ be a trim lattice, and let $\LL_1 := [\hat{0},m_1]$ and $\LL^1:=[j_1,\hat{1}]$.  Then \[\LL = \LL_1 \sqcup \LL^1.\]
\end{lemma}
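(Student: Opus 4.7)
The plan is to use the identification of elements of $\LL$ with maximal orthogonal pairs in the Galois graph $G(\LL)$ provided by \Cref{thm:extreme_representation}. Under this identification, an element $x$ lies in $[\hat 0, m_1]$ precisely when $m_1 \geq x$, which is the statement that $1 \in x_\MM$; dually, $x$ lies in $[j_1, \hat 1]$ precisely when $j_1 \leq x$, which is the statement that $1 \in x_\JJ$. So the decomposition amounts to the claim that $1$ appears in exactly one of the two coordinates of every maximal orthogonal pair.

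Disjointness is then immediate: by definition, an orthogonal pair satisfies $x_\JJ \cap x_\MM = \emptyset$, so $\LL_1 \cap \LL^1 = \emptyset$. For the covering $\LL = \LL_1 \cup \LL^1$, I would argue by maximality. Suppose $(x_\JJ, x_\MM)$ is a maximal orthogonal pair with $1 \notin x_\MM$. Since all arrows $i \to k$ in $G(\LL)$ satisfy $i > k$, the vertex $1$ has no outgoing arrows, so enlarging the first coordinate to $x_\JJ \cup \{1\}$ preserves orthogonality. Maximality of $x_\JJ$ then forces $1 \in x_\JJ$, giving the desired dichotomy.

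I do not expect any real obstacle: the argument uses only the numbering convention on the Galois graph (arrows point from larger to smaller indices) and the definition of maximal orthogonal pair. In particular, the decomposition is really a property of every extremal lattice; trimness plays no essential role in this lemma itself, though it will become relevant immediately afterwards when \Cref{thm:intervals} is invoked to treat $\LL_1$ and $\LL^1$ as trim lattices in their own right and iterate the decomposition.
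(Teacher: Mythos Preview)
Your proof is correct and follows essentially the same approach as the paper: both identify elements with maximal orthogonal pairs, observe that vertex $1$ is a sink in $G(\LL)$, and use maximality to force $1$ into $x_\JJ$ whenever $1\notin x_\MM$. Your version is slightly more explicit in spelling out the identifications $x\in\LL_1\Leftrightarrow 1\in x_\MM$ and $x\in\LL^1\Leftrightarrow 1\in x_\JJ$ and in addressing disjointness directly, and your remark that only extremality (not trimness) is used here is a valid observation.
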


\begin{proof}
By construction, the vertex $1$ in the Galois graph $G(\LL)$ is a sink.  For $\x \in \LL$ to be in neither $\LL_1$ nor $\LL^1$, $1$ cannot appear in either of $\x_\JJ$ or $\x_\MM$ (the maximal orthogonal pair corresponding to $\x$).  But $1$ can certainly be added to $\x_\JJ$, contradicting the assumption that $\x_\JJ$ was maximal.
\end{proof}

\begin{proposition}
Let $\LL$ be a trim lattice with Galois graph $G(\LL)$.  Then
\begin{itemize}
\item the Galois graph $G(\LL^1)$ is obtained by deleting the vertex $1$ from $G(\LL)$ (along with all edges to $1$), and
\item the Galois graph $G(\LL_1)$ is obtained by deleting all vertices and edges adjacent to $1$ in $G(\LL)$ (along with $1$ itself).
\end{itemize}
\label{lem:recursion}
\end{proposition}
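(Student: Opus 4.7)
The plan is to prove both statements by establishing a lattice isomorphism between each interval and the lattice of maximal orthogonal pairs of the claimed graph, then invoking Markowsky's representation theorem (\Cref{thm:extreme_representation}) to identify the Galois graphs. By \Cref{lem:decomposition}, elements of $\LL^1$ correspond precisely to maximal orthogonal pairs $(X, Y)$ of $G(\LL)$ with $1 \in X$, while elements of $\LL_1$ correspond to those with $1 \in Y$. Write $G'$ for $G(\LL)$ with vertex $1$ deleted (along with all incident edges), and $G''$ for the further deletion of the in-neighbors $N(1) = \{k \neq 1 : \text{there is an edge } k \to 1 \text{ in } G(\LL)\}$.

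For the first statement, vertex $1$ has no outgoing edges in $G(\LL)$ since all edges $i \to k$ require $i > k$. Hence the presence of $1$ in $X$ imposes no orthogonality constraint, and the map $(X, Y) \mapsto (X \setminus \{1\}, Y)$ yields a bijection between maximal orthogonal pairs of $G(\LL)$ with $1 \in X$ and maximal orthogonal pairs of $G'$, preserving the partial order (inclusion on the first component). This establishes $\LL^1 \cong \LL(G')$.

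For the second statement, for any $(X, Y) \in \LL_1$, disjointness forces $1 \notin X$ and orthogonality forces $X \cap N(1) = \emptyset$ (else there would be an edge from $X$ to $1 \in Y$), so $X$ is contained in the vertex set $V''$ of $G''$. Conversely, given a maximal orthogonal pair $(X', Y')$ of $G''$, the unique maximal orthogonal partner $Y^+$ of $X'$ in $G(\LL)$ must contain $1$ (since $X' \subseteq V''$ admits no edge to $1$ in $G(\LL)$), lifting $(X', Y')$ to an element of $\LL_1$. I expect the maps $(X, Y) \mapsto (X, Y \cap V'')$ and $(X', Y') \mapsto (X', Y^+)$ to be mutually inverse order-preserving bijections, establishing $\LL_1 \cong \LL(G'')$.

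The main obstacle will be verifying that the forward map in the second case actually lands in $\LL(G'')$, that is, that $(X, Y \cap V'')$ is \emph{maximal} in $G''$ rather than merely orthogonal. The worrying scenario is a vertex $k \in V'' \setminus (X \cup Y)$ whose failure to extend $X$ in $G(\LL)$ is witnessed only by an edge $k \to y$ with $y \in N(1) \cap Y$ (the case $y = 1$ is ruled out because $k \notin N(1)$), leaving the image non-maximal in $G''$. This is where trimness must enter: I expect such a configuration, together with the dual obstruction preventing $k$ from joining $Y$, to force a non-overlapping cover relation in $\LL$, contradicting \Cref{thm:overlapping}.
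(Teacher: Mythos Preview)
Your treatment of $\LL^1$ is correct and essentially coincides with the paper's one-line justification via maximal orthogonal pairs.

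For $\LL_1$ your route is genuinely different from the paper's. The paper works intrinsically with the lattice: it first observes that the join-irreducibles of $\LL_1$ are exactly the $j_i$ with $i\in V''$, then invokes \Cref{thm:intervals} to know $\LL_1$ is trim, and proves the key identity $m_i'=m_i\wedge m_1$ for the meet-irreducibles of $\LL_1$ via an increasing-chain argument using the left modular labelling. The Galois graph description is then immediate, since for $j_i\leq m_1$ one has $j_i\leq m_k$ iff $j_i\leq m_k\wedge m_1=m_k'$.

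Your approach via maximal orthogonal pairs can be made to work, but the obstacle you flag is real and ``I expect'' does not close it. Here is the missing step. Your own analysis already shows, using only extremality, that $\psi$ is a well-defined order-embedding $\LL(G'')\hookrightarrow\LL_1$ and that $Y\cap V''$ is the maximal partner of $X$ in $G''$. Now suppose $X$ is \emph{not} the maximal partner of $Y\cap V''$ in $G''$, and let $X^*\supsetneq X$ be that maximal partner, so $(X^*,Y\cap V'')\in\LL(G'')$. Applying $\psi$ yields $w^*=(X^*,Y^{**})\in\LL_1$, and $X\subsetneq X^*$ gives $w=(X,Y)<w^*$ in $\LL$. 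But $X^*\subseteq V''$ and $X^*\cap(Y\cap V'')=\emptyset$, so
\[
w_\MM\cap (w^*)_\JJ \;=\; Y\cap X^* \;=\; (Y\cap V'')\cap X^* \;=\; \emptyset,
\]
i.e.\ the relation $w<w^*$ is non-overlapping, contradicting \Cref{thm:overlapping}. (Note this produces a non-overlapping \emph{relation}, not necessarily a cover; \Cref{thm:overlapping} covers both.) This forces $X=X^*$, so $\phi$ lands in $\LL(G'')$ and is inverse to $\psi$.

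As for what each approach buys: the paper's argument yields the explicit formula $m_i'=m_i\wedge m_1$, which is of independent use, but it relies on \Cref{thm:intervals} and the labelling machinery. Your argument stays entirely at the level of the Galois graph, invokes trimness only through the overlapping criterion, and in particular gives extremality of $\LL_1$ as a by-product rather than importing it from \Cref{thm:intervals}.
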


\begin{proof}
The statement for $\LL^1$ follows from the definition of maximal orthogonal pair.  We now consider $\LL_1$.  Its set of join-irreducibles are the join-irreducibles of $\LL$ which lie below $m_1$; these correspond by definition to the vertices of $G(\LL)$ not adjacent to 1.   By \Cref{thm:intervals}, $\LL_1$ is trim, so its join-irreducibles and its meet-irreducibles are in natural bijection.  For those $i$ with $j_i\leq m_1$, write $m_i'$ for the corresponding meet-irreducible of $\LL_1$.

We show that $m_i'=m_i \wedge m_1$, following the proof of~\cite[Theorem 1]{thomas2006analogue}.  Let $z=m_i \wedge m_1$.  Since $z\leq m_i$ and $j_i \not \leq m_i$, $j_i \not \leq z$.  So $j_i \vee z \neq z$.  Let the increasing chain from $z$ to $j_i \vee z$ be $z= t_0 \lessdot t_1 \lessdot \cdots \lessdot t_r=j_i \vee z$.  We have the upper bound $t_r \leq m_1$, so that $t_1 \not \leq m_i$---since otherwise we have the contradiction $t_1 \leq m_1 \wedge m_i=t_0$.  By the labelling of $\LL_1$ by meet-irreducibles, $\gamma_{\LL_1}(t_0,t_1)\geq i$; similarly, by its labelling by join-irreducibles, $\gamma_{\LL_1}(t_{r-1},t_r) \leq i$.  Since the labels on the chain increase, the chain is a single covering relation, labelled in $\LL_1$ by $i$.

The element $z$ therefore lies below $m_i'$.  But any element at the bottom of an edge labeled $i$ in $\LL_1$ lies below both $m_1$ and $z$.  Therefore $z$ must be the meet-irreducible labelled $i$ in $\LL_1$, and so $z=m_i \wedge m_1=m_i'$.

The desired description of the Galois graph of $\LL_1$ follows, since if $j_i$ is a join-irreducible below $m_1$, then $j_i$ is below $m_k$ iff $j_i$ is below $m_k'=m_k\wedge m_1$.
\end{proof}

 As in the introduction, for each element $\y \in \LL$, we define its set of \defn{downward labels} \[\down(\y):=\left\{\gamma(\x \lessdot \y) : \text{ for all } \x \text{ such that } \x \lessdot \y\right\},\]
and its set of \defn{upward labels}
\[\up(\y) := \left\{\gamma(\y \lessdot z) : \text{ for all } z \text{ such that } \y \lessdot z\right\}.\]

For $\LL'$ an interval (and hence trim sublattice) of $\LL$ and an element $\x \in \LL'$, we write $\down_{\LL'}(\x)$ and $\up_{\LL'}(\x)$ for the restriction of the set of labels of $\x$ to the cover relations in $\LL'$.  Any label $i$ can belong to at most one of $\down(\x)$ and $\up(\x)$.

\begin{lemma}
\label{lem:trichotomy}
An element $\x \in \LL_1$ if and only if $1 \in \up(\x)$.
\end{lemma}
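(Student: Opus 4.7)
The plan is to establish the easy direction directly from \Cref{cor:contains}, and handle the reverse direction by combining the left modularity of $j_1 = x_1$ with the dichotomy provided by \Cref{lem:decomposition}.

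If $1 \in \up(\x)$, there is a cover $\x \lessdot z$ with $\gamma(\x \lessdot z) = 1$. By \Cref{cor:contains}, $1 \in \x_\MM \cap z_\JJ$; in particular $1 \in \x_\MM$, so $\x \leq m_1$ and hence $\x \in \LL_1$.

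Conversely, suppose $\x \in \LL_1$, so $\x \leq m_1$. Since $j_1 = x_1$ is the first element of the fixed left modular chain, it covers $\hat{0}$, so it is an atom of $\LL$. Moreover $j_1 \not\leq m_1$ (otherwise $x_1 = j_1 \leq m_1$, contradicting $m_1 \not\geq x_1$), so $j_1 \wedge m_1 < j_1$ forces $j_1 \wedge m_1 = \hat{0}$ because $j_1$ is an atom. In particular $j_1 \not\leq \x$, so $z := \x \vee j_1 > \x$. Applying the left modularity of $j_1$ to the pair $\x \leq m_1$ yields
\[
(\x \vee j_1) \wedge m_1 \;=\; \x \vee (j_1 \wedge m_1) \;=\; \x.
\]
Now choose any cover $\x \lessdot y$ with $y \leq z$, which exists since $z > \x$. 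I claim $j_1 \leq y$: if not, then \Cref{lem:decomposition} places $y$ in $\LL_1$, so $y \leq m_1$, whence $y = y \wedge m_1 \leq z \wedge m_1 = \x$, contradicting $y > \x$. Therefore $1 \in y_\JJ$, and combined with $1 \in \x_\MM$, \Cref{cor:contains} gives $\gamma(\x \lessdot y) = 1$, so $1 \in \up(\x)$.

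The main obstacle is that the natural candidate $\x \vee j_1$ need not itself cover $\x$, since the Galois graph may force additional join-irreducibles into $(\x \vee j_1)_\JJ$ beyond $\x_\JJ \cup \{1\}$. The left modular identity above circumvents this: it shows $m_1$ cleanly separates $\x$ from every element strictly above it in $[\x, \x \vee j_1]$, so every such element lies above $j_1$, and in particular so does the first cover of $\x$ in that interval.
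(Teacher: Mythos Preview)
Your proof is correct. The forward direction is essentially the same as the paper's (both amount to observing that an up-edge with label $1$ forces $x\leq m_1$, equivalently $j_1\not\leq x$). For the reverse direction the paper takes a different route: it considers the unique increasing chain from $x$ to $x\vee j_1$ guaranteed by the EL-property of $\gamma$, notes that the last label must be $1$ (since $j_1$ joins the penultimate element up to $x\vee j_1$), and concludes the chain has length one because $1$ is the minimum label and the labels weakly increase.

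Your argument instead exploits the left modularity of $j_1=x_1$ directly, deriving $(x\vee j_1)\wedge m_1=x$ and then using the partition $\LL=\LL_1\sqcup\LL^1$ from \Cref{lem:decomposition} to force any cover of $x$ inside $[x,x\vee j_1]$ to lie above $j_1$. This is a bit more hands-on: it makes the role of left modularity explicit and avoids appealing to the EL-labelling machinery, at the cost of a slightly longer computation. The paper's version is terser but relies on the reader supplying the reason the increasing chain collapses. Both approaches in fact establish the stronger statement (used immediately afterwards in \Cref{lem:down}) that $x\vee j_1$ covers $x$.
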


\begin{proof}
If $1 \in \up(\x)$, then $x \not \geq j_1$ and so $x \in \LL_1$.  If $x\in \LL_1$, then there is an increasing chain from $\x$ to $\x\vee j_1$, and---by definition of the labelling $\gamma$---the last cover relation in the chain would have the label $1$.  But then this chain would have to be of length one, so that $1\in \up(\x)$.
\end{proof}

\begin{lemma} If $y \in \LL_1$, then $y \vee j_1\gtrdot y$ and $\down(y \vee j_1)=\down(y)\cup \{1\}$.
\label{lem:down}
\end{lemma}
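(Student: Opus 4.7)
The plan is to handle the two conclusions separately: first $y \vee j_1 \gtrdot y$ (necessarily with label $1$), then $\down(y \vee j_1) = \down(y) \cup \{1\}$ by two inclusions. Throughout write $z = y \vee j_1$.

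For the cover, consider the unique increasing chain $y = t_0 \lessdot t_1 \lessdot \cdots \lessdot t_r = z$ supplied by the EL-labelling. Each $t_i \geq y$ forces $t_{r-1} \vee j_1 = z = t_r$, so definition (1) in \Cref{labelling} gives $\gamma(t_{r-1} \lessdot t_r) \leq \beta_\JJ(j_1) = 1$; since labels are positive integers strictly increasing along the chain, $r = 1$, and so $y \lessdot z$ with label $1$. In particular $1 \in \down(z)$. Conversely, $1 \notin \down(y)$: a cover $w \lessdot y$ of label $1$ would satisfy $w \vee j_1 = y$, giving $j_1 \leq y$ and contradicting $y \in \LL_1$.

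For $\down(y) \subseteq \down(z)$, take $k \in \down(y)$ (so $k > 1$) witnessed by $w \lessdot y$. The chain $w \lessdot y \lessdot z$ has labels $(k, 1)$, which are not increasing, so the interpolating property (\Cref{thm:interpolating}) produces an increasing chain from $w$ to $z$ whose first label is $1$ and last label is $k$; in particular $k \in \down(z)$.

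For the reverse inclusion, take $k \in \down(z) \setminus \{1\}$ witnessed by $w' \lessdot z$. First observe $w' \in \LL^1$: otherwise $w' \in \LL_1$, and applying the cover part to $w'$ would force $w' \vee j_1 = z$ with label $1 \neq k$. Demi-semimodularity (\Cref{demi-semi}) applied to $y, w' \lessdot z$ with labels $1 < k$ then yields $w^* := y \wedge w' \lessdot y$. Set $\ell = \gamma(w^* \lessdot y)$; since $\down(y) \cap \up(y) = \emptyset$ forces $\ell \neq 1$, we have $\ell > 1$, and the chain $w^* \lessdot y \lessdot z$ is non-increasing. Interpolating then yields a unique increasing chain from $w^*$ to $z$ with first label $1$ and last label $\ell$, and since $w^* \leq y \leq m_1$ gives $w^* \in \LL_1$, the cover part identifies its first step as $w^* \lessdot w^* \vee j_1$.

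The main obstacle will be concluding $\ell = k$. My strategy is to exhibit a second saturated chain from $w^*$ to $z$ passing through $w'$: since $j_1 \leq w'$ and $w^* \leq w'$, we have $w^* \vee j_1 \leq w'$, providing a saturated chain $w^* \lessdot w^* \vee j_1 \leq \cdots \leq w' \lessdot z$ in which the final cover carries label $k$. Uniqueness of increasing chains in the EL-labelling will then identify $\ell$ with $k$ once this chain can be chosen increasingly; the key subtlety is to verify that every interior label in $[w^* \vee j_1, w']$ lies strictly below $k$, which should follow from the trim-recurrence structure of \Cref{lem:recursion} applied to the trim sublattice $\LL^1$, together with the disjointness of $\down(w')$ and $\up(w') \ni k$.
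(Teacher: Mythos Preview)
Your argument tracks the paper's closely through the cover relation and the inclusion $\down(y)\subseteq\down(z)$. For the reverse inclusion you also follow the paper: apply \Cref{demi-semi} to obtain $w^{*}:=y\wedge w'\lessdot y$ and set $\ell:=\gamma(w^{*}\lessdot y)$. The remaining task is exactly to show $\ell=k$, and here your proposal has a genuine gap. Your plan is to manufacture an increasing chain from $w^{*}$ to $z$ that passes through $w'$, but the ``key subtlety'' you flag---that every label in $[w^{*}\vee j_{1},w']$ lies below $k$---is precisely what is at issue, and your appeal to \Cref{lem:recursion} and to $k\in\up(w')$ does not establish it. You would essentially be assuming what you need to prove about the shape of the increasing chain.

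The paper finishes this step in one stroke (its proof is terse and contains a typo, but the intent is the following). From $\gamma(w'\lessdot z)=k$ and characterization (3) of \Cref{labelling}, $z\wedge m_{k}=w'$; hence $m_{k}\geq w'$ but $m_{k}\not\geq z=y\vee w'$, so $m_{k}\not\geq y$. Since $y\wedge m_{k}\geq y\wedge w'=w^{*}$ and $w^{*}\lessdot y$, this forces $y\wedge m_{k}=w^{*}$, and characterization (3) again gives $\ell\geq k$. Dually, $w^{*}\vee j_{\ell}=y$ by characterization (1), so $j_{\ell}\leq y$ but $j_{\ell}\not\leq w^{*}=y\wedge w'$, hence $j_{\ell}\not\leq w'$; then $w'\vee j_{\ell}=z$ and characterization (1) gives $k\leq\ell$. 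No auxiliary chain through $w'$ is needed.
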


\begin{proof}
	By~\Cref{lem:trichotomy}, $1 \in \up(y)$.  Write $z=y \vee j_1$ for the element covering $y$ with $\gamma(y \lessdot z) = 1$.  Observe that $1 \in \down(z)$.

	Fix $i\in \down(y)$.  Let $x$ be the element covered by $y$ such that $\gamma(x \lessdot y) = i$.  Now, $x\lessdot y \lessdot z$, and since $\gamma$ is an interpolating labelling by~\Cref{thm:interpolating}, the edge down
    from $z$ on the increasing chain from $x$ to $z$ is labelled by
    $\gamma(x\lessdot y)=i$. (Note that $x\lessdot y \lessdot z$ is not the increasing chain from $x$ to $z$.)  So $\down(y) \subset \down(z)$.

    Finally, suppose $i\in \down(z) \setminus \{1\}$.  Let
    $\gamma(x\lessdot z)=i$.  By~\Cref{demi-semi}, $y\wedge x\lessdot y$, and by~\Cref{thm:interpolating} we have $\lambda(y\wedge x \lessdot x)=i$ so that $\down(z)=\down(y) \cup \{1\}$.
\end{proof}

\begin{remark}
The specialization of~\Cref{lem:down} to Cambrian lattices appears as~\cite[Lemma 2.8]{reading2007sortable}.
\end{remark}

\begin{lemma} If $x\in \LL^1$, then $\down_{\LL^1}(x)=\down_{\LL}(x)\setminus \{1\}$.
\label{lem:go_up_1}
\end{lemma}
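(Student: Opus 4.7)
The plan is to prove the two set inclusions $\down_{\LL^1}(x) \subseteq \down_{\LL}(x)\setminus\{1\}$ and $\down_{\LL}(x)\setminus\{1\} \subseteq \down_{\LL^1}(x)$ separately. Since $\LL^1 = [j_1,\hat 1]$ is an interval, covers in $\LL^1$ are exactly the covers in $\LL$ between elements that both lie above $j_1$, so the labels themselves agree whenever the cover is inherited. The substance of the claim is that no ``new'' labels appear in $\down_\LL(x)$ beyond $\{1\}$ when one passes from $\LL^1$ to $\LL$, and that the label $1$ is precisely the one that is lost.

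For the first inclusion, I would fix $y \lessdot x$ in $\LL^1$ and note that this is also a cover in $\LL$, so $\gamma(y \lessdot x) \in \down_\LL(x)$. To rule out the label $1$, suppose $\gamma(y \lessdot x) = 1$. By \Cref{labelling}(1), this means $y \vee j_1 = x$; but $y \in \LL^1$ gives $y \geq j_1$, hence $y \vee j_1 = y \neq x$, a contradiction.

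For the second inclusion, I would take $y \lessdot x$ in $\LL$ with $\gamma(y \lessdot x) = i \neq 1$ and argue that $y$ must already lie in $\LL^1$. Suppose instead $y \in \LL_1$, so $y \not\geq j_1$ and by \Cref{lem:trichotomy} $1 \in \up(y)$; then \Cref{lem:down} gives $y \vee j_1 \gtrdot y$ with $\gamma(y \lessdot y \vee j_1) = 1$. Since $x \geq j_1$ and $y \leq x$, we have $y \vee j_1 \leq x$. If $y \vee j_1 < x$, the strict chain $y < y \vee j_1 < x$ contradicts $y \lessdot x$; if $y \vee j_1 = x$, then $\gamma(y \lessdot x) = 1$, contradicting $i \neq 1$. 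Hence $y \in \LL^1$, and $y \lessdot x$ is also a cover in $\LL^1$, giving $i \in \down_{\LL^1}(x)$.

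I do not expect a serious obstacle here: the argument is a direct case analysis resting on \Cref{lem:trichotomy}, \Cref{lem:down}, and the description of cover labels in \Cref{labelling}. The only subtlety is handling the case $y \vee j_1 = x$, where one must remember that $\gamma(y \lessdot y\vee j_1) = 1$ is forced by the left modular labelling, not merely by the recursive description of $\LL_1$ and $\LL^1$.
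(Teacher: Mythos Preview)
Your proof is correct and follows essentially the same approach as the paper's. The paper's proof is a one-sentence version of your argument: it simply asserts that edges from an element of $\LL^1$ down to an element not in $\LL^1$ are exactly those labelled $1$, which is precisely what your two inclusions establish in detail. Your invocation of \Cref{lem:down} in the second inclusion is slightly heavier than necessary---\Cref{lem:trichotomy} together with the labelling definition already forces the cover $y\lessdot y\vee j_1$ to carry label $1$---but this is not an error.
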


\begin{proof} Edges from an element of $\LL^1$ down to an element not in
$\LL^1$ are necessarily labelled $1$, and it is exactly such edges whose labels must be removed from $\down_{\LL}(x)$ to obtain $\down_{\LL^1}(x)$.  (The number of such edges going down from a particular $x$ in $\LL'$ is either zero or one.  When we write $\down_{\LL}(x)\setminus \{1\}$, we do not mean to imply that $1$ is necessarily present in $\down_{\LL}(x)$.)
\end{proof}

\section{Rowmotion for Trim Lattices}
\label{sec:global_row}

In this section we prove~\Cref{thm:main_thm}.

\subsection{Descriptive labellings of trim lattices}
We first show that a left modular labelling of a trim lattice is descriptive, which allows us to define rowmotion for trim lattices.

\begin{proposition}\label{pone}
An element $\x$ of a trim lattice $\LL$ is determined by $\down(x)$, and by $\up(x)$: \[x=\bigvee_{i \in \down(x)} j_i = \bigwedge_{i \in \up(x)} m_i.\]
Furthermore, $(\x_\JJ,\x_\MM)$ is the unique maximal orthogonal pair such that $\down(x) \subseteq x_\JJ$ and $\up(x) \subseteq x_\MM$.
\end{proposition}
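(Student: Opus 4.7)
The plan is to establish the two displayed equalities directly from the left modular labelling using the three equivalent characterizations in \Cref{labelling}, and then to deduce the uniqueness statement as a formal consequence. The key trim-specific fact I will exploit is that $\beta_\JJ$ and $\beta_\MM$ are \emph{bijections} to $[n]$, so the label $i$ of a cover unambiguously names a single join-irreducible $j_i$ (and a single meet-irreducible $m_i$).

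First I would prove $x = \bigvee_{i \in \down(x)} j_i$. Set $x' := \bigvee_{i \in \down(x)} j_i$. For each cover $y \lessdot x$ with label $i$, characterization (1) of \Cref{labelling} forces $y \vee j_i = x$, so in particular $j_i \leq x$; hence $x' \leq x$. For the reverse inequality, assume $x' < x$ and choose any cover $y \lessdot x$ with $x' \leq y$ (such a cover exists in any finite lattice). Letting $i := \gamma(y \lessdot x) \in \down(x)$, we have $j_i \leq x' \leq y$, so $y \vee j_i = y \neq x$, contradicting characterization (1). The dual argument using characterization (3) of \Cref{labelling} yields $x = \bigwedge_{i \in \up(x)} m_i$.

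For the ``furthermore'' claim, the containments $\down(x) \subseteq x_\JJ$ and $\up(x) \subseteq x_\MM$ are immediate, since $i \in \down(x)$ forces $j_i \leq x$ (whence $i \in x_\JJ$) and dually. For uniqueness, suppose $(Y, Z)$ is a maximal orthogonal pair with $\down(x) \subseteq Y$ and $\up(x) \subseteq Z$, and let $w \in \LL$ be the corresponding element under \Cref{thm:extreme_representation}, so that $w_\JJ = Y$ and $w_\MM = Z$. Then $j_i \leq w$ for each $i \in Y$, so $w \geq \bigvee_{i \in \down(x)} j_i = x$; dually $w \leq x$. Hence $w = x$ and $(Y, Z) = (x_\JJ, x_\MM)$.

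The main obstacle I anticipate is nothing beyond a careful bookkeeping step: ensuring that the minimum in characterization (1) of \Cref{labelling} is actually achieved at the specific join-irreducible $j_i$ indexed by the cover label. This requires, but is guaranteed by, the bijectivity of $\beta_\JJ$ in the trim setting (which fails for merely left modular lattices). Once this identification is pinned down, the rest of the argument is purely formal, and no deeper structural property of trim lattices (such as overlapping covers or the trim recurrence) is needed.
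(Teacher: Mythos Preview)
Your proof is correct and follows essentially the same approach as the paper: both use the characterizations in \Cref{labelling} (together with the bijectivity of $\beta_\JJ$ in the trim case) to show $j_i\leq x$ but $j_i\not\leq y$ for each cover $y\lessdot x$ with label $i$, which forces $x=\bigvee_{i\in\down(x)}j_i$. Your treatment of the ``furthermore'' clause is in fact slightly more complete than the paper's---you spell out the uniqueness argument explicitly, whereas the paper establishes only the containments $\down(x)\subseteq x_\JJ$ and $\up(x)\subseteq x_\MM$ (via \Cref{cor:contains}) and leaves uniqueness implicit.
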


  \begin{proof}
    For $i\in \down(\x)$, we know that there is an edge $\y \lessdot \x$ such that $\x=\y\vee j_i$, which implies that $j_i\leq x$ and $j_i \not\leq y$.
    So $\x \geq \bigvee_{i\in \down(\x)} j_i$, and for any $\y \lessdot \x$, $\bigvee_{i\in \down(x)} j_i \not \leq \y$.  Thus $\x=\bigvee_{i \in \down(x)} j_i$.
    By \Cref{labelling}, the dual analysis gives the corresponding result for $\up(\x)$.

    As $x$ is determined by both $\down(x)$ and $\up(x)$, apply~\Cref{cor:contains} to conclude that $\x_\JJ$ must contain $\down(\x)$ and $\x_\MM$ must contain $\up(\x)$.
    \end{proof}

\begin{remark}
Note that a distributive lattice $J(\mathcal{Q})$ is descriptively labelled by $\mathcal{Q}$---indeed, $\down(\x)$ is the antichain in $\Po$ consisting of the maximal elements of $\x$, while $\up(\x)$ is the antichain consisting of the minimal elements in $\Po \setminus x$, and any antichain in $\Po$ can play either of these r\^oles.

In the special case when $\LL$ is a Cambrian lattice,~\Cref{pone} appears as~\cite[Lemma 2.5]{reading2007sortable}, and is a consequence of the bijection between sortable elements and noncrossing partitions.

    Thus, as in the introduction, the sets $\down(\x)$ and $\up(\x)$ simultaneously generalize the notion of ``antichain'' from distributive lattices and ``noncrossing partition'' from Cambrian lattices.
\end{remark}

We will show that the set of all downward labels is the same as the set of all upward labels by induction on the length of $\LL$, using the decomposition of $\LL$ stated in~\Cref{lem:decomposition}.

\begin{proposition}\label{ptwo}  Let $\LL$ be a trim lattice.  Then \[\Big\{ \down(\x) : \x \in \LL\Big\} = \Big\{ \up(\y) : \y \in \LL\Big\}.\]
\end{proposition}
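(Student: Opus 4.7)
We proceed by induction on the length $n$ of $\LL$. The base case $n \leq 1$ is immediate, so assume the result for all trim lattices of length strictly less than $n$. Using \Cref{lem:decomposition}, we partition $\LL = \LL_1 \sqcup \LL^1$ where $\LL_1 = [\hat 0, m_1]$ and $\LL^1 = [j_1, \hat 1]$; by \Cref{thm:intervals} both are trim of strictly smaller length, so the induction hypothesis applies to each. Write $L_\downarrow(\LL') := \{\down_{\LL'}(x):x \in \LL'\}$ and similarly $L_\uparrow(\LL')$.

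The strategy is to express $L_\downarrow(\LL)$ and $L_\uparrow(\LL)$ in terms of the analogous sets for $\LL_1$ and $\LL^1$, carefully accounting for the label $1$ across the decomposition. For $x \in \LL_1$, every lower cover of $x$ stays in $\LL_1$ (since $y \lessdot x \leq m_1$ forces $y \leq m_1$), giving $\down_\LL(x) = \down_{\LL_1}(x)$; meanwhile \Cref{lem:trichotomy} gives $1 \in \up_\LL(x)$, \Cref{lem:down} identifies the corresponding upper cover as $x \vee j_1 \in \LL^1$, and the remaining upper covers stay in $\LL_1$, so $\up_\LL(x) = \up_{\LL_1}(x) \cup \{1\}$. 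Dually, for $x \in \LL^1$ every upper cover lies in $\LL^1$ so $\up_\LL(x) = \up_{\LL^1}(x)$, while \Cref{lem:go_up_1} yields $\down_\LL(x) = \down_{\LL^1}(x) \cup \{1\}$ when $1 \in \down_\LL(x)$ and $\down_\LL(x) = \down_{\LL^1}(x)$ otherwise.

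The crucial auxiliary claim is that $y \mapsto y \vee j_1$ is a bijection from $\LL_1$ onto $\{x \in \LL^1 : 1 \in \down_\LL(x)\}$, under which \Cref{lem:down} gives $\down_{\LL^1}(y \vee j_1) = \down_{\LL_1}(y)$. Indeed, if $y \lessdot x$ is labelled $1$, then by the definition of the left modular labelling $y \vee j_1 = x$, which forces $j_1 \not\leq y$ and hence $y \in \LL_1$ by \Cref{lem:decomposition}; injectivity is immediate since a labelling distinguishes distinct lower covers of the same element. Sorting elements of $\LL$ along the decomposition and invoking the identities above, a short set-theoretic calculation produces
\[L_\downarrow(\LL) \;=\; L_\downarrow(\LL^1) \;\cup\; \bigl\{S \cup \{1\} : S \in L_\downarrow(\LL_1)\bigr\}\]
and, symmetrically,
\[L_\uparrow(\LL) \;=\; L_\uparrow(\LL^1) \;\cup\; \bigl\{S \cup \{1\} : S \in L_\uparrow(\LL_1)\bigr\}.\]
The two right-hand sides then agree by the induction hypothesis. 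The only real obstacle is establishing the bijection between $\LL_1$ and $\{x \in \LL^1 : 1 \in \down_\LL(x)\}$ together with the matching of downward label sets across it, which is where the structural lemmas of \Cref{sec:decomposition} do the essential work; once that is in hand, the combination of the downward and upward descriptions is purely formal.
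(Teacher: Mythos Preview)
Your proof is correct and is essentially the paper's argument: the same induction on length via the decomposition $\LL = \LL_1 \sqcup \LL^1$, invoking the same supporting lemmas (\Cref{lem:trichotomy,lem:down,lem:go_up_1}). The only difference is organizational---the paper matches elements one at a time in three cases (for $x \in \LL_1$; for $x \in \LL^1$ with or without $1 \in \down(x)$) to exhibit a $y$ with $\down(x)=\up(y)$, whereas you compute the two sets globally via the displayed identities; your bijection $\LL_1 \to \{x \in \LL^1 : 1 \in \down_\LL(x)\}$ together with $\down_{\LL^1}(y\vee j_1)=\down_{\LL_1}(y)$ is exactly what collapses the paper's three cases into the single equality $L_\downarrow(\LL)=L_\downarrow(\LL^1)\cup\{S\cup\{1\}:S\in L_\downarrow(\LL_1)\}$.
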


\begin{proof}
	For any $\x \in \LL$, we find an element $\y$ such that $\down(\x)=\up(\y)$ by induction on the length $n$ of $\LL$.  The proof follows three disjoint cases refining~\Cref{lem:decomposition}.

	\medskip
	\emph{Case I: $x \in \LL_1$.}  Writing $\x^1 = \x \vee j_1$, we have $\down(\x^1)=\down(\x)\cup \{1\}$ by \Cref{lem:down}.  The interval $\LL^1$ is a trim lattice of length $n-1$ by~\Cref{thm:intervals}.  Since $\x^1 \in \LL^1$, by induction on length, we obtain an element $\y \in \LL^1$ such that $\up_{\LL^1}(\y) = \down_{\LL^1}(\x^1)$.  But $\up_{\LL^1}(\y)=\up(\y)$ and $\down_{\LL^1}(\x^1) = \down_{\LL}(\x^1) \setminus \{1\} = \down(\x)$.

 \medskip
    \emph{Case II: $x \in \LL^1$ and $1 \in \down(\x)$.}  Since $1 \in \down(\x)$, there is an element $x_1 \lessdot x$ such that $x_1 \vee j_1 = \x$.  By \Cref{lem:down}, $\down_{\LL_1}(\x_1)= \down_{\LL}(x) \setminus \{1\}$.  The interval $\LL_1$ is a trim lattice of strictly shorter length than $\LL$.  Since $x_1 \in \LL_1$, by induction on length, we obtain an element $\y \in \LL_1$ such that $\up_{\LL_1}(\y) = \down_{\LL_1}(\x_1)$.  But now viewing $\y$ as an element of $\LL$, we see that also $1 \in \up_{\LL}(\y)$.

	\medskip
	\emph{Case III: $\x \in \LL^1$ and $1 \not \in \down(\x)$.}  Since $\LL^1$ is a trim lattice of length $n-1$, by induction on length, we obtain an element $\y \in \LL^1$ such that $\up_{\LL^1}(\y) = \down_{\LL^1}(\x)=\down(\x)$.
\end{proof}

\Cref{eq:global_row} therefore defines a \defn{rowmotion} $\row^\gamma:\LL\to\LL$ on a trim lattice $\LL$ by \[\row^\gamma(\x):=\text{ the unique element } y \in \LL \text{ such that } \down(\x)=\up(\y).\]

\begin{theorem}\label{global_independent}
The definition of $\row^\gamma$ on a trim lattice $\LL$ does not depend on the choice of left modular labelling $\gamma$.
\end{theorem}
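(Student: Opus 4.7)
The plan is to reduce the independence of $\row^\gamma$ from $\gamma$ to two pieces of structural data that are already known to be intrinsic to the trim lattice $\LL$, thereby peeling off the auxiliary choice of maximal-length chain that enters via $\gamma$.

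First I would recall that each left modular labelling arises from a choice of left modular chain $\hat 0 = x_0 \lessdot \cdots \lessdot x_n = \hat 1$, which induces bijections $\beta_\JJ : \JJ \to [n]$ and $\beta_\MM : \MM \to [n]$. Writing $\gamma$ and $\gamma'$ for the labellings associated to two such chains, \Cref{thm:trim_labels} and its dual yield
\[
\beta_\JJ^{-1}(\gamma(y \lessdot z)) = (\beta'_\JJ)^{-1}(\gamma'(y \lessdot z)) \quad \text{and} \quad \beta_\MM^{-1}(\gamma(y \lessdot z)) = (\beta'_\MM)^{-1}(\gamma'(y \lessdot z))
\]
for every cover $y \lessdot z$. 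In other words, each cover carries an intrinsic join-irreducible and an intrinsic meet-irreducible, from which I would extract chain-free sets $\mathcal{D}(x) \subseteq \JJ$ and $\mathcal{U}(x) \subseteq \MM$ satisfying $\mathrm{D}^\gamma(x) = \beta_\JJ(\mathcal{D}(x))$ and $\mathrm{U}^\gamma(x) = \beta_\MM(\mathcal{U}(x))$ for every labelling $\gamma$.

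Next I would invoke \Cref{cor:galois_unique}, which asserts that the bijection $\Phi := \beta_\MM \circ \beta_\JJ^{-1} : \JJ \to \MM$ is also independent of the chosen chain. The defining relation $\mathrm{D}^\gamma(x) = \mathrm{U}^\gamma(y)$ for $y = \row^\gamma(x)$ in \Cref{eq:global_row} then rewrites, after identifying labels in $[n]$ with irreducibles via $\beta_\JJ$ and $\beta_\MM$, as $\Phi(\mathcal{D}(x)) = \mathcal{U}(y)$. This reformulated condition involves only the intrinsic data $\Phi$, $\mathcal{D}$, and $\mathcal{U}$, so the element $y$ it selects cannot depend on $\gamma$; existence and uniqueness of such a $y$ are already supplied by \Cref{pone,ptwo}.

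The main obstacle is conceptual rather than computational: recognizing that \Cref{eq:global_row}, although written in terms of the auxiliary labels in $[n]$, is really a condition on irreducibles. Once this perspective is adopted, independence follows immediately from \Cref{thm:trim_labels} (labels track intrinsic irreducibles across labellings) together with \Cref{cor:galois_unique} (the correspondence $\JJ \leftrightarrow \MM$ is intrinsic), with no further structural argument about trim lattices required.
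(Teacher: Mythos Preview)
Your argument is correct and shares the paper's core insight: the integer labels on covers encode intrinsic join-irreducibles via \Cref{thm:trim_labels}, so the rowmotion condition is chain-independent. The paper's execution is slightly leaner, however: it views both $\down(x)$ and $\up(y)$ through join-irreducibles alone, so that $\down(x)=\up(y)$ becomes directly an equality of intrinsic subsets of $\JJ$, with no need to pass through $\MM$ or invoke \Cref{cor:galois_unique}; your detour through $\mathcal{U}(y)\subseteq\MM$ works but is avoidable. (Minor typo: your formula for $\Phi$ should be $\beta_\MM^{-1}\circ\beta_\JJ$, not $\beta_\MM\circ\beta_\JJ^{-1}$, for the domain and codomain to match.)
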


\begin{proof}
	By~\Cref{thm:trim_labels}, if we replace the label of a cover relation by the corresponding join-irreducible, then the labels do not depend on the initial choice of maximal chain.  The result follows.
\end{proof}

\subsection{Slow motion}

As in the introduction, a \defn{flip} at the label $i \in [n]$ is the permutation $\tog_i: \LL \to \LL$ defined by \[\tog_i (\y) := \begin{cases} \x & \text{if } i \in \down(\y) \text{ and } \gamma(\x \lessdot \y) = i,\\ z & \text{if } i \in \up(\y) \text{ and } \gamma(\y \lessdot z) = i, \\ \y & \text{ otherwise.} \end{cases}\]  In words, starting at the element $\y$ in the Hasse diagram of $\LL$, flipping at $i$ can be visualized as a walk along the unique edge incident to $\y$ labeled by $i$, should such an edge exist. 

{
\renewcommand{\thetheorem}{\ref{thm:main_thm}}
\begin{theorem}     Any trim lattice has a descriptive labelling, and rowmotion can be computed in slow motion.
\end{theorem}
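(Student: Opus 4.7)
The descriptive part of the theorem follows immediately from the two preceding propositions of Subsection 4.1: \Cref{pone} shows that each element $x$ of a trim lattice $\LL$ is determined by $\down(x)$ and by $\up(x)$, and \Cref{ptwo} shows that $\{\down(x):x\in\LL\}=\{\up(x):x\in\LL\}$, so $\gamma$ is descriptive and $\row^\gamma$ is well defined by \Cref{eq:global_row}.

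For the slow motion claim I induct on $|\LL|$ using the decomposition $\LL=\LL_1\sqcup\LL^1$ of \Cref{lem:decomposition}. Both $\LL_1=[\hat{0},m_1]$ and $\LL^1=[j_1,\hat{1}]$ are proper intervals of $\LL$, hence trim by \Cref{thm:intervals} and of strictly smaller size. The essential preliminary step is the structural observation that every cover relation crossing between $\LL_1$ and $\LL^1$ is labeled $1$: if $w\in\LL_1$ and $w\lessdot z$ with $z\in\LL^1$, then $z\geq j_1$ and $w\lessdot w\vee j_1$ by \Cref{lem:down}, so $z=w\vee j_1$ and $\gamma(w\lessdot z)=1$. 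Two consequences follow. First, for any $i\neq 1$, the flip $\tog_i^\LL$ preserves both $\LL_1$ and $\LL^1$ set-wise and restricts there to the intrinsic flips $\tog_i^{\LL_1}$ and $\tog_i^{\LL^1}$. Second, for any label $i\in N(1)$ (a vertex of $G(\LL)$ adjacent to $1$, i.e., $j_i\not\leq m_1$), the flip $\tog_i^\LL$ acts as the identity on $\LL_1$: otherwise a cover of some $y\in\LL_1$ labeled $i$ would have both endpoints in $\LL_1$ and force $j_i\leq m_1$, contradicting $i\in N(1)$.

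Now fix a linear extension of $P(\LL)$ starting with $1$, valid because $1$ is a sink in $G(\LL)$ and therefore minimal in $P(\LL)$. The three cases in the proof of \Cref{ptwo} then carry over directly to the slow motion product. In Case I ($x\in\LL_1$), the first flip yields $\tog_1(x)=x\vee j_1\in\LL^1$, the remaining flips preserve $\LL^1$ and are ordered by a linear extension of $P(\LL^1)=P(\LL)\setminus\{1\}$, and by the inductive hypothesis their product is $\row^\gamma_{\LL^1}(x\vee j_1)$, which equals $\row^\gamma_\LL(x)$ by Case I of \Cref{ptwo}. In Case II ($x\in\LL^1$ with $1\in\down(x)$), the first flip yields $\tog_1(x)=x_1\in\LL_1$; the subsequent flips at labels in $N(1)$ act trivially on $\LL_1$ while those at labels in $V(G(\LL_1))$ restrict to $\LL_1$-flips ordered by a linear extension of $P(\LL_1)$, so induction in $\LL_1$ yields $\row^\gamma_{\LL_1}(x_1)=\row^\gamma_\LL(x)$. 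In Case III ($x\in\LL^1$ with $1\notin\down(x)$), the first flip fixes $x$ and the remaining flips produce $\row^\gamma_{\LL^1}(x)=\row^\gamma_\LL(x)$ by induction.

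It remains to show that the product does not depend on the choice of linear extension. Since any two linear extensions of a finite poset are related by a sequence of adjacent transpositions of incomparable pairs, it suffices to verify that $\tog_i$ and $\tog_j$ commute whenever $i$ and $j$ are incomparable in $P(\LL)$. The main obstacle I anticipate is precisely this commutativity check; I expect it to follow from a case analysis on the action of the two flips on the maximal orthogonal pair representation of elements, using that incomparability of $i,j$ in $P(\LL)$ precludes any directed path between them in $G(\LL)$, so the modifications of $(X,Y)$ induced by $\tog_i$ and $\tog_j$ act independently.
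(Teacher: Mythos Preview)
Your three-case induction is exactly the paper's argument, and your structural observations about edges crossing between $\LL_1$ and $\LL^1$ and about $\tog_i$ acting trivially on $\LL_1$ for $i\in N(1)$ are correct and make explicit what the paper glosses over with the parenthetical ``the only edges in $\LL$ leaving $\LL_1$ are labelled by $1$.''

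The genuine gap is your final paragraph. You prove the slow-motion identity only for linear extensions beginning with $1$, and then propose to reach all linear extensions by establishing that $\tog_i$ and $\tog_j$ commute whenever $i,j$ are incomparable in $P(\LL)$. You correctly flag this as unfinished, and it is not obvious: a flip can change which labels appear in $\up$ and $\down$ of the resulting element, so the case analysis on maximal orthogonal pairs is delicate. The paper sidesteps this entirely. It invokes \Cref{global_independent}: rowmotion is independent of the choice of maximal-length chain. Given \emph{any} linear extension $\mathbf{l}$ of $P(\LL)$, its first element is minimal in $P(\LL)$, so there is a maximal-length chain in which that element receives the label $1$ (maximal-length chains correspond to linear extensions of $P(\LL)$). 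Relabelling via that chain, $\mathbf{l}$ now begins with $1$, and your induction applies directly. No commutativity statement is needed.

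So the fix is short: drop the commutativity argument and instead, before the case split, invoke \Cref{global_independent} and \Cref{thm:trim_labels} to relabel so that the given linear extension starts with $1$.
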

\addtocounter{theorem}{-1}
}

  \begin{proof}
\Cref{pone,ptwo} prove that the left modular labelling of a trim lattice is descriptive.

We will check that flipping in the order of any linear extension of $P(\LL)$ defines the same permutation as rowmotion---since the maximal length chains in a trim lattice correspond to the linear extensions of the Galois poset $P(\LL)$, this will prove that rowmotion can be computed in slow motion.  Let $\mathbf{l}$ be a linear extension of $P(\LL)$.  Since the definition of $\row^\gamma$ does not depend on the left modular labelling $\gamma$ by~\Cref{global_independent}, we may assume the first element of $\mathbf{l}$ is $1$.

The proof is by induction on the length of $\LL$.  We again follow three disjoint cases refining~\Cref{lem:decomposition}.
\medskip

    \emph{Case I: $x \in \LL_1$.} At the first step when calculating $\prod_{i \in \la} \tog_{i} $, we walk to $y\gtrdot x$, where $y=x\vee j_1$.  So $\down(y)=\down(x)\cup \{1\}$ by~\Cref{lem:down} and $y\in \LL^1$ by~\Cref{lem:trichotomy}.  Applying the rest of the flips to $y$ has the effect of applying  $\prod_{\substack{i \in \la \\ i\neq 1}} \tog_{i} $ in $\LL^1$.  (Note that the only edges in $\LL$ leaving $\LL^1$ are labelled by 1, so they will never be taken.)  By induction, we obtain an element $z \in \LL^1$ such that $\up_{\LL^1}(z)=\down_{\LL^1}(y)$.  But $\up_{\LL^1}(z)=\up_\LL(z)$, and $\down_{\LL^1}(y)=\down_{\LL}(y)\setminus \{1\}=\down_\LL(x)$.

\medskip
    \emph{Case II:  $x \in \LL^1$ and $1 \in \down(\x)$.}   At the first step when calculating $\prod_{i \in \la} \tog_{i}(x)$, we walk to $y\lessdot x$, where $x=y\vee j_1$.  Since $1 \in \up(\y)$, $y\in \LL_1$ by~\Cref{lem:trichotomy}.  Then $\down(y) \cup \{1\}=\down(x)$ by~\Cref{lem:down}.  Applying the rest of the flips to $y$ has the effect of applying  $\prod_{\substack{i \in \la \\ i\neq 1}} \tog_{i} $ in $\LL_1$.  (Note that the only edges in $\LL$ leaving $\LL_1$ are labelled by 1, so they will never be taken.)  By induction, we obtain an element $z \in \LL_1$ such that $\up_{\LL_1}(z)=\down_{\LL_1}(y)$.  But $\up_{\LL_1}(z)\cup\{1\}=\up_\LL(z)$, and $\down_{\LL_1}(y)\cup\{1\}=\down_{\LL}(y)\cup \{1\}=\down_\LL(x)$.

\medskip
    \emph{Case III: $\x \in \LL^1$ and $1 \not \in \down(\x)$.}
By assumption, flipping at 1 has no effect.  As in Case I, the rest of the flips have the effect of applying $\prod_{\substack{i \in \la \\ i\neq 1}} \tog_{i}$ in $\LL^1$.  By induction, we obtain an element $z \in \LL^1$ such that $\up_{\LL^1}(z)=\down_{\LL^1}(x)$.  But $\down_{\LL^1}(x)=\down_\LL(x)$, and $\up_{\LL^1}(z)=\up_\LL(z)$.
\end{proof}

\section{The Independence Complex of a Trim Lattice}
\label{sec:labelcomplex}

In this section, motivated by Barnard's corresponding result for semidistributive lattices, we prove that the set of downward (or upward) labels in a trim lattice is a flag simplicial complex, that its 1-skeleton is the complement of the Galois graph, and that it contains exactly the independent sets of the Galois graph.

\begin{proposition}
For $\LL$ a trim lattice with left modular labelling $\gamma$, \[\gamma(\LL):=\Big\{\down(x) : x \in \LL\Big\}=\Big\{\up(x) : x \in \LL\Big\}\] is a simplicial complex.  We call $\gamma(\LL)$ the \defn{independence complex} of $\LL$.
\label{prop:ind_complex}
\end{proposition}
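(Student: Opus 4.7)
The plan is to induct on the length $n$ of $\LL$, using the decomposition $\LL = \LL_1 \sqcup \LL^1$ of \Cref{lem:decomposition}. By \Cref{thm:intervals}, both $\LL_1 = [\hat 0, m_1]$ and $\LL^1 = [j_1, \hat 1]$ are again trim, and by \Cref{lem:recursion} their Galois graphs have strictly fewer vertices than $G(\LL)$, so the inductive hypothesis is available on each. The base case $n = 0$ is immediate. For $n \geq 1$, the set $\gamma(\LL)$ is nonempty because $\down(\hat 0) = \emptyset$, and it suffices to show closure under removing a single label: for every $x \in \LL$ and every $i \in \down(x)$, some $x' \in \LL$ should satisfy $\down(x') = \down(x) \setminus \{i\}$.

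I would then break into cases on $i = 1$ versus $i \neq 1$, and on whether $x$ lies in $\LL_1$ or in $\LL^1$. If $i = 1$, then $x$ must lie in $\LL^1$ (by \Cref{lem:trichotomy} and the disjointness of up- and downward labels), the cover $y \lessdot x$ labelled $1$ lies in $\LL_1$ with $x = y \vee j_1$, and \Cref{lem:down} yields $\down(y) = \down(x) \setminus \{1\}$. If $i \neq 1$ and $x \in \LL_1$, then every cover below $x$ lies in $\LL_1$, so $\down_\LL(x) = \down_{\LL_1}(x)$, and induction applied to $\LL_1$ directly produces the required $x' \in \LL_1$. If $i \neq 1$, $x \in \LL^1$, and $1 \in \down(x)$, I would write $x = y \vee j_1$ with $y \in \LL_1$, apply induction in $\LL_1$ to find $y'$ with $\down(y') = \down(y) \setminus \{i\}$, and lift back via $x' = y' \vee j_1$ and another application of \Cref{lem:down}.

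The main obstacle is the remaining case $i \neq 1$, $x \in \LL^1$, $1 \notin \down(x)$. Here \Cref{lem:go_up_1} identifies $\down_{\LL^1}(x) = \down(x)$, so induction in $\LL^1$ produces $x' \in \LL^1$ with $\down_{\LL^1}(x') = \down(x) \setminus \{i\}$. The difficulty is that $\down_\LL(x')$ could acquire a spurious label $1$ not present in $\down(x)$. When this happens, I would feed $x'$ back into the $i = 1$ reduction above: it yields $y \in \LL_1$ with $\down(y) = \down(x') \setminus \{1\} = \down_{\LL^1}(x') = \down(x) \setminus \{i\}$, so $y$ is the corrected witness. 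Verifying cleanly that this repair always produces an element of $\LL$ with exactly the required downward labels, and that the four cases together exhaust all possibilities, is the technical heart of the argument; once this is in hand, the simplicial complex property follows.
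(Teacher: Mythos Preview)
Your argument is correct and follows essentially the same approach as the paper: induction on length via the decomposition $\LL=\LL_1\sqcup\LL^1$, with the same key lemmas (\Cref{lem:trichotomy}, \Cref{lem:down}, \Cref{lem:go_up_1}) and the same ``repair'' in Case~III when a spurious label $1$ appears. The only cosmetic difference is that you prove closure under removing a single label (then iterate), whereas the paper directly produces a witness for an arbitrary subset $B\subseteq\down(x)$; this collapses your four cases into three but changes nothing substantive.
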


\begin{proof}
The equality of downward and upward labels in the definition of $\gamma(\LL)$ follows from~\Cref{ptwo}.  Let $A \in \gamma(\LL)$, so that $A=\down(x)$ for some $x \in \LL$.  For any subset $B\subseteq A$, we must find an element $y$ for which $\down(y)=B$.

 \medskip
	\emph{Case I: $x \in \LL_1$.}  We are done by induction, since $\LL_1$ is a trim lattice of shorter length than $\LL$.

 \medskip
    \emph{Case II: $x \in \LL^1$ and $1 \in \down(\x)$.}  Considering the element $x_1$ such that $x=x_1\vee j_1$, we see that $A \setminus \{1\} \in \gamma(\LL_1)$.  By induction, we may find $y_1 \in \LL_1$ with $\down(y_1)=B\setminus\{1\}$.  If $1 \in B$, take $y=j_1 \vee y_1$; otherwise, take $y=y_1$.

	\medskip
	\emph{Case III: $\x \in \LL^1$ and $1 \not \in \down(\x)$.}  Since $\LL^1$ has shorter length than $\LL$, we may find $y^1 \in \LL^1$ with $\down_{\LL^1}(y^1)=B$.  If $\down(y^1)=B$, take $y=y^1$; otherwise, $\down_{\LL^1}(y^1)=B \cup \{1\}$ and we may take $y$ to be the element in $\LL_1$ such that $y^1=j_1 \vee y$.
\end{proof}

\begin{example}
The independence graph and Galois graph for the Cambrian lattice of~\Cref{fig:tamari_extremal} are drawn in~\Cref{fig:cjcgalois}.
\end{example}

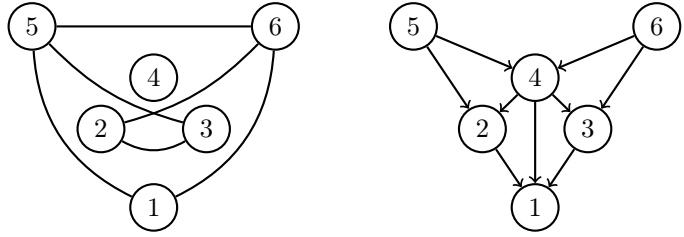
\begin{figure}[htbp]
\begin{center}
\begin{tikzpicture}[scale=.8]
\node (1) [circle,thick,draw] at (0,0) {1};
\node (3) [circle,thick,draw]  at (.867,1.3) {3};
\node (2) [circle,thick,draw] at (-.867,1.3) {2};
\node (4) [circle,thick,draw] at (0,2.15) {4};
\node (5) [circle,thick,draw] at (-2,3) {5};
\node (6) [circle,thick,draw] at (2,3) {6};
\draw[-,thick] (5) to (6);
\draw[-,thick] (2) to [bend right] (3);
\draw[-,thick] (1) to [bend right] (6);
\draw[-,thick] (1) to [bend left] (5);
\draw[-,thick] (2) to [bend right=15] (6);
\draw[-,thick] (3) to [bend left=15] (5);
\end{tikzpicture}\hspace{3em}
\begin{tikzpicture}[scale=.8]
\node (1) [circle,thick,draw] at (0,0) {1};
\node (2) [circle,thick,draw]  at (-.867,1.3) {2};
\node (3) [circle,thick,draw] at (.867,1.3) {3};
\node (4) [circle,thick,draw] at (0,2.15) {4};
\node (5) [circle,thick,draw] at (-2,3) {5};
\node (6) [circle,thick,draw] at (2,3) {6};
\draw[->,thick] (5) to (4);\draw[->,thick] (5) to (2);\draw[->,thick] (6) to (3);
\draw[->,thick] (4) to (2);
\draw[->,thick] (2) to (1);
\draw[->,thick] (6) to (4);
\draw[->,thick] (4) to (3);
\draw[->,thick] (3) to (1);
\draw[->,thick] (4) to (1);
\end{tikzpicture}
\end{center}
\caption{On the left is the independence graph for the Cambrian lattice of~\Cref{fig:tamari_extremal}.  For comparison, we have reproduced its Galois graph on the right.}
\label{fig:cjcgalois}
\end{figure}

Recall that the \defn{deletion} and \defn{link} of a face $F$ in a simplicial complex $\Delta$ are
\begin{align*}\mathrm{del}_\Delta(F) &= \{G \in \Delta : G \cap F = \emptyset\}, \text{ and}\\\mathrm{lk}_{\Delta}(F) &= \{ G \in \Delta : G \cap F = \emptyset, G \cup F \in \Delta\}.\end{align*}

\begin{proposition}
\label{lem:recursion2}
For $\LL$ a trim lattice, $\gamma(\LL^1)=\mathrm{del}_{\gamma(\LL)}(1)$ and $\gamma(\LL_1) = \mathrm{lk}_{\gamma(\LL)}(1)$.
\end{proposition}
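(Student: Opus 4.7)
The plan is to prove each equality by verifying both inclusions, exploiting the recursive decomposition of $\LL$ in \Cref{lem:decomposition} and the three technical lemmas \ref{lem:trichotomy}, \ref{lem:down}, and \ref{lem:go_up_1} that were used to handle the label $1$. The underlying dictionary is: being in $\LL^1$ corresponds to losing the downward label $1$ (since only edges labelled $1$ can leave $\LL^1$ going down), while being in $\LL_1$ corresponds to having $1$ as an upward label (hence \emph{never} as a downward label). A small preliminary observation is that since $\LL_1 = [\hat 0, m_1]$ is an order ideal of $\LL$, every cover $y \lessdot x$ with $x \in \LL_1$ satisfies $y \in \LL_1$, so that $\down_{\LL_1}(x) = \down_{\LL}(x)$ for $x \in \LL_1$.

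For the first equality $\gamma(\LL^1) = \mathrm{del}_{\gamma(\LL)}(1)$, I would argue as follows. Given $x \in \LL^1$, \Cref{lem:go_up_1} gives $\down_{\LL^1}(x) = \down_{\LL}(x) \setminus \{1\}$. If $1 \notin \down_{\LL}(x)$, this set equals $\down_{\LL}(x) \in \gamma(\LL)$; if $1 \in \down_{\LL}(x)$, let $x_1 \lessdot x$ be the cover with $\gamma(x_1 \lessdot x) = 1$, so that $x_1 \vee j_1 = x$ and, by \Cref{lem:down}, $\down_{\LL}(x_1) = \down_{\LL}(x) \setminus \{1\} = \down_{\LL^1}(x)$. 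In either case the face lies in $\gamma(\LL)$ and misses $1$, proving the forward inclusion. For the reverse, take $F = \down_{\LL}(x) \in \gamma(\LL)$ with $1 \notin F$; if $x \in \LL^1$ we are done, and if $x \in \LL_1$ then by \Cref{lem:trichotomy} and \Cref{lem:down} the element $y := x \vee j_1$ lies in $\LL^1$ and satisfies $\down_{\LL}(y) = F \cup \{1\}$, so $\down_{\LL^1}(y) = F$ by \Cref{lem:go_up_1}.

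For the second equality $\gamma(\LL_1) = \mathrm{lk}_{\gamma(\LL)}(1)$, the forward inclusion is immediate from the preliminary observation and \Cref{lem:down}: for $x \in \LL_1$, the set $\down_{\LL_1}(x) = \down_{\LL}(x)$ misses $1$ (since $1 \in \up(x)$ by \Cref{lem:trichotomy}), while $\down_{\LL}(x) \cup \{1\} = \down_{\LL}(x \vee j_1) \in \gamma(\LL)$ witnesses that adding $1$ yields a face. Conversely, suppose $F \in \gamma(\LL)$ satisfies $1 \notin F$ and $F \cup \{1\} \in \gamma(\LL)$; write $F \cup \{1\} = \down_{\LL}(y)$. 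Since $1 \in \down(y)$, there is a cover $y_1 \lessdot y$ with label $1$, so $y_1 \vee j_1 = y$ and $1 \in \up(y_1)$, whence $y_1 \in \LL_1$ by \Cref{lem:trichotomy}. By \Cref{lem:down}, $\down_{\LL}(y_1) = F$, and by the preliminary observation $\down_{\LL_1}(y_1) = F$ as well, placing $F$ in $\gamma(\LL_1)$.

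There is no real obstacle here beyond careful bookkeeping: both equalities reduce to tracing how the label $1$ behaves under the decomposition $\LL = \LL_1 \sqcup \LL^1$, and all the needed mechanics were already isolated in \Cref{lem:trichotomy}, \Cref{lem:down}, and \Cref{lem:go_up_1}. The only subtlety worth flagging is the asymmetry $\down_{\LL_1} = \down_\LL|_{\LL_1}$ versus $\down_{\LL^1} = \down_\LL|_{\LL^1} \setminus \{1\}$, which is exactly what produces a link on the $\LL_1$ side and a deletion on the $\LL^1$ side.
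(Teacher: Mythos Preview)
Your proof is correct and follows the same approach as the paper, which simply cites \Cref{lem:go_up_1} for the deletion statement and \Cref{lem:down} for the link statement. Your version is a careful unpacking of those two citations, verifying both inclusions explicitly and making the preliminary observation $\down_{\LL_1}=\down_\LL|_{\LL_1}$ precise; the paper leaves all of this implicit.
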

\begin{proof}
The first statement is equivalent to~\Cref{lem:go_up_1}, while the second is a rephrasing of \Cref{lem:down}.
\end{proof}

\begin{proposition}
\label{prop:flag}
For $\LL$ a trim lattice, $\gamma(\LL)$ is flag.
\end{proposition}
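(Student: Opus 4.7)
The plan is to proceed by induction on the length $n$ of $\LL$, leveraging the recursive decomposition $\LL = \LL_1 \sqcup \LL^1$ from \Cref{lem:decomposition} together with the identifications $\gamma(\LL^1) = \mathrm{del}_{\gamma(\LL)}(1)$ and $\gamma(\LL_1) = \mathrm{lk}_{\gamma(\LL)}(1)$ from \Cref{lem:recursion2}. Given $S \subseteq [n]$ all of whose pairs lie in $\gamma(\LL)$, I would split on whether $1 \in S$. When $1 \notin S$, every pair of $S$ avoids $1$ and therefore lies in $\gamma(\LL^1)$; since $\LL^1$ is trim of length $n-1$, the inductive hypothesis gives $S \in \gamma(\LL^1) \subseteq \gamma(\LL)$. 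When $1 \in S$, write $T = S \setminus \{1\}$: it suffices to show $T \in \gamma(\LL_1)$. Each singleton in $T$ already lies in $\gamma(\LL_1)$ because $\{1, i\} \in \gamma(\LL)$ by hypothesis, so by the inductive flag property for $\LL_1$ (trim of strictly smaller length) it will be enough to check that every pair $\{i,j\} \subseteq T$ belongs to $\gamma(\LL_1)$---equivalently, that $\{1,i,j\} \in \gamma(\LL)$.

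The proof will therefore reduce to a three-element subclaim: \emph{if $\{1, i\}, \{1, j\}, \{i, j\} \in \gamma(\LL)$ for distinct $1, i, j$, then $\{1, i, j\} \in \gamma(\LL)$}. To prove this I would set $u = j_1 \vee j_i$, $v = j_1 \vee j_j$, $w = j_i \vee j_j$; by \Cref{pone} these are the unique elements realizing the three pairs as $\down$-sets. If $w \in \LL_1$, that is $j_1 \not\leq w$, then \Cref{lem:down} immediately produces $w \vee j_1 \gtrdot w$ with $\down(w \vee j_1) = \down(w) \cup \{1\} = \{1, i, j\}$, proving the subclaim.

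The main obstacle will be to exclude the alternative $w \in \LL^1$. The premises rule out $j_1 \leq j_i$ and $j_1 \leq j_j$, since either would force $u = j_i$ or $v = j_j$ to be join-irreducible and have $\down$-set of size $1$. So one is reduced to assuming $j_1 \leq j_i \vee j_j$ with $j_1 \not\leq j_i, j_j$. Since $1 \in \down(u) \cap \down(v)$ and the labelling $\beta_\MM$ is a bijection in a trim lattice, the covers $u' := u \wedge m_1 \lessdot u$ and $v' := v \wedge m_1 \lessdot v$ exist and are both labeled $1$; by \Cref{labelling}(1) they satisfy $u' \vee j_1 = u$ and $v' \vee j_1 = v$, and by construction $u', v' \in \LL_1$. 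Setting $z := w \wedge m_1 \in \LL_1$, one gets $z \geq u' \vee v'$, whence $z \vee j_1 \geq (u' \vee j_1) \vee (v' \vee j_1) = u \vee v = w$, and also $z \vee j_1 \leq w$, so $z \vee j_1 = w$. Now \Cref{lem:down} applied to $z$ makes $z \lessdot w$ a cover labeled $1$, forcing $1 \in \down(w)$---contradicting $\down(w) = \{i, j\}$. Hence $w \in \LL_1$, the subclaim holds, and the flag property of $\gamma(\LL)$ follows from the outer induction.
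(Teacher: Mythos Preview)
Your proof is correct. The inductive skeleton (length of $\LL$, decomposition $\LL=\LL_1\sqcup\LL^1$, the identifications of \Cref{lem:recursion2}) and your case $1\notin S$ match the paper. The case $1\in S$ diverges: the paper applies its Case~I to $A\setminus\{1\}$ to get $z$ with $\down(z)=A\setminus\{1\}$ and then simply asserts $\down(j_1\vee z)=A$, a step that requires $z\in\LL_1$ but is not justified there. You instead pass to $\gamma(\LL_1)=\mathrm{lk}_{\gamma(\LL)}(1)$ and invoke the flag property of $\LL_1$ inductively, which forces you to prove the three-element subclaim. Your subclaim argument---showing via $z=w\wedge m_1$ and \Cref{lem:down} that $w=j_i\vee j_j$ cannot lie in $\LL^1$ when $\{1,i\},\{1,j\}\in\gamma(\LL)$---is precisely the missing justification for the paper's terse step, and in fact generalizes verbatim from $|A\setminus\{1\}|=2$ to arbitrary size. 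So your route is longer but self-contained; the paper's is shorter but leaves implicit what your work makes explicit.
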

\begin{proof}
For $A$ a clique in  the $1$-skeleton of  $\gamma(\LL)$, we must find $y \in \LL$ such that $\down(y)=A$.

 \medskip
\emph{Case I:  $1 \not \in A$.} By induction on length we may find an element $y^1 \in \LL^1$ with $\down_{\LL^1}(y^1)=A$, since $\gamma(\LL^1)=\mathrm{del}_{\gamma(\LL)}(1)$.  If $\down(y^1)=A$, take $y=y^1$; otherwise, $\down_{\LL^1}(y^1)=A \cup \{1\}$ by~\Cref{lem:go_up_1} and by~\Cref{lem:down} we may take $y$ to be the element in $\LL_1$ such that $y^1=j_1 \vee y$.

 \medskip
    \emph{Case II: $1 \in A$.} Then $A \setminus \{1\}$ is a clique in the $1$-skeleton of $\gamma(\LL^1)$, and we can find an element $z$ with $\down(z)=A \setminus\{1\}$ by the previous argument.  But now $y=j_1 \vee z$ has $\down(y)=A$.
\end{proof}

As a consequence of~\Cref{prop:flag}, $\gamma(\LL)$ is determined by its edges.  We define the \defn{independence graph} (by abuse of notation, also denoted $\gamma(\LL)$) to be the 1-skeleton of the independence complex of a trim lattice $\LL$.

\begin{theorem}
For $\LL$ a trim lattice, the complement of the Galois graph $G(\LL)$ is the independence graph $\gamma(\LL)$.
\label{thm:complement}
\end{theorem}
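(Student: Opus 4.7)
The vertex sets of both graphs are $[n]$, and by~\Cref{prop:flag} it suffices to compare edges. Since arrows in $G(\LL)$ go from larger to smaller indices, for distinct $i, k \in [n]$ with $i > k$, the pair $\{i, k\}$ lies in the complement of $G(\LL)$ precisely when there is no arrow $i \to k$, i.e.\ when $j_i \leq m_k$. My plan is to show that, for $i > k$,
\[
\{i, k\} \in \gamma(\LL) \iff j_i \leq m_k,
\]
by induction on the length $n$ of $\LL$, using the decomposition $\LL = \LL_1 \sqcup \LL^1$ of~\Cref{lem:decomposition}. The explicit descriptions of $G(\LL_1)$ and $G(\LL^1)$ from~\Cref{lem:recursion}, together with those of $\down$-labels from~\Cref{lem:down,lem:go_up_1,lem:trichotomy}, let me move between the three lattices cleanly, while~\Cref{thm:trim_labels} ensures that labels in the sublattices are canonically identified with the corresponding indices in $[n]$.

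\emph{Case A} ($1 \notin \{i,k\}$). For ``$\Leftarrow$'', since $i, k \neq 1$ the edges between $i$ and $k$ in $G(\LL^1)=G(\LL)\setminus\{1\}$ coincide with those in $G(\LL)$, so induction on $\LL^1$ yields $y \in \LL^1$ with $\{i,k\} \subseteq \down_{\LL^1}(y) \subseteq \down_\LL(y)$ (the second inclusion being~\Cref{lem:go_up_1}). For ``$\Rightarrow$'', suppose $\{i,k\} \subseteq \down_\LL(y)$. If $y \in \LL^1$ then~\Cref{lem:go_up_1} gives $\{i,k\} \subseteq \down_{\LL^1}(y)$ and we apply induction to $\LL^1$. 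If instead $y \in \LL_1$ then $y \leq m_1$ forces every cover of $y$ in $\LL$ to lie inside $\LL_1$, so $\down_\LL(y) = \down_{\LL_1}(y)$ and induction applies to $\LL_1$; the correspondence between arrows in $G(\LL)$ and in $G(\LL_1)$ is again furnished by~\Cref{lem:recursion}.

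\emph{Case B} (WLOG $k = 1$ and $i > 1$). For ``$\Leftarrow$'', the hypothesis $j_i \leq m_1$ places $j_i \in \LL_1$; since $j_i$ is join-irreducible we have $\down(j_i)=\{i\}$, and~\Cref{lem:down} (applied to $j_i \in \LL_1$) gives $\down(j_i \vee j_1) = \{i,1\}$, so $\{i,1\} \in \gamma(\LL)$. For ``$\Rightarrow$'', if $\{i,1\} \subseteq \down(y)$ then $1 \in \down(y)$, so $y \in \LL^1$ by~\Cref{lem:trichotomy}; taking $x_1 \lessdot y$ to be the cover with label $1$, we have $x_1 \in \LL_1$ and~\Cref{lem:down} gives $i \in \down(x_1) = \down_{\LL_1}(x_1)$. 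But $i$ being a label occurring in $\LL_1$ forces $i$ to be a vertex of $G(\LL_1)$, which by~\Cref{lem:recursion} means precisely that $j_i \leq m_1$.

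The main delicate point throughout is the consistent identification of labels across $\LL$, $\LL_1$, and $\LL^1$---verifying, for example, that ``$i \in \down(x_1)$'' refers to the same index $i$ whether read inside $\LL$ or inside $\LL_1$.~\Cref{thm:trim_labels} is exactly what makes this identification unambiguous, and~\Cref{lem:recursion} makes it compatible with the Galois-graph restrictions; once this bookkeeping is fixed, the remaining work is the case analysis above. I expect the most error-prone step in writing this up carefully to be exactly this identification---not the structural content of the argument, which is transparent, but ensuring that each application of the inductive hypothesis is stated at the correct level of the recursion.
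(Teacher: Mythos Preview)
Your proof is correct and follows essentially the same inductive route as the paper: the trim recurrence $\LL = \LL_1 \sqcup \LL^1$, with \Cref{lem:recursion} controlling the Galois-graph side. The only difference is packaging: the paper first records the independence-complex recursion as \Cref{lem:recursion2} ($\gamma(\LL^1)=\mathrm{del}_{\gamma(\LL)}(1)$ and $\gamma(\LL_1)=\mathrm{lk}_{\gamma(\LL)}(1)$) and then matches the two recursions in one line, whereas your Case~A/Case~B analysis inlines that proposition via \Cref{lem:down,lem:go_up_1,lem:trichotomy} directly.
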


\begin{proof}
\Cref{lem:recursion2,lem:recursion} show that $G(\LL)$ and $\gamma(\LL)$  obey the same induction when $1$ is deleted.  For the remaining edges, note that $\LL=\LL^1 \sqcup \LL_1$ and there is an edge from $k$ to $1$ in $G(\LL)$ if and only if $j_k \in \LL^1$, and an edge between $k$ and $1$ in $\gamma(\LL)$ if and only if $j_k \in \LL_1$.
\end{proof}

Recall that an \defn{independent set} of a graph is a subset of vertices, no two of which are adjacent.  Since $\gamma(\LL)$ and $G(\LL)$ are complementary, we conclude the following characterization of the downward (and upward) labels of a trim lattice.
\begin{corollary}
\label{cor:independent}
For $\LL$ a trim lattice, the independence complex $\gamma(\LL)$ is the set of independent sets of $G(\LL)$, ordered by inclusion.  In particular, the number of elements of a trim lattice $\LL$ depends only on its undirected Galois graph $G(\LL)$.
\end{corollary}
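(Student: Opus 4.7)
The plan is to assemble the corollary directly from the two structural results just established, without any new combinatorial input. First I would recall from~\Cref{thm:complement} that the $1$-skeleton of $\gamma(\LL)$ is the complement of the underlying undirected graph of $G(\LL)$: an unordered pair $\{i,j\}$ is an edge of the independence graph exactly when it is \emph{not} an edge (in either direction) of $G(\LL)$. Consequently a subset $S \subseteq [n]$ is an independent set of $G(\LL)$ if and only if every $2$-element subset of $S$ is an edge of the $1$-skeleton of $\gamma(\LL)$.

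Next I would feed this into~\Cref{prop:flag}: since $\gamma(\LL)$ is flag, a set $S \subseteq [n]$ is a face of $\gamma(\LL)$ if and only if every $2$-element subset of $S$ is an edge of its $1$-skeleton. Concatenating the two equivalences identifies the faces of $\gamma(\LL)$ with the independent sets of $G(\LL)$, and the inclusion order is tautologically preserved on both sides.

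For the enumerative statement, I would invoke~\Cref{pone} to see that the map $x \mapsto \down(x)$ is an injection of $\LL$ into $\gamma(\LL)$, and~\Cref{prop:ind_complex} to see that its image is all of $\gamma(\LL)$. Hence $|\LL|$ equals the number of faces of $\gamma(\LL)$, which by the preceding paragraph equals the number of independent sets of $G(\LL)$. Independence depends only on which pairs of vertices are joined by some edge, so this count is unchanged if we forget edge orientations in $G(\LL)$.

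There is no genuine obstacle: the substantive work is entirely packed into~\Cref{prop:flag} (flagness of $\gamma(\LL)$) and~\Cref{thm:complement} (the complementary-graph identification). The corollary is the formal assembly of these facts together with the bijection $x \leftrightarrow \down(x)$, rephrased in the standard language of independent sets of graphs.
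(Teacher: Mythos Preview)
Your proposal is correct and follows exactly the approach intended by the paper: the corollary is stated immediately after~\Cref{thm:complement} with the remark that it follows from the complementarity of $\gamma(\LL)$ and $G(\LL)$, and you have simply written out the assembly of~\Cref{prop:flag} and~\Cref{thm:complement} explicitly, together with the bijection $x\mapsto\down(x)$ coming from~\Cref{pone}. There is nothing to add.
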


\begin{remark}
\label{rem:same_number}
For a distributive lattice $J(\Q)$, the independent sets of the Galois graph are the antichains in $\Q$; when $\Q$ is a positive root poset, these are called the \defn{nonnesting partitions}.  For a Cambrian lattice $\Camb_c(W)$, the downward labels of elements encode noncrossing partitions.  The setting of trim lattices allows both nonnesting and noncrossing partitions to be viewed as independent sets of two different Galois graphs associated to a finite root system.
For example, \Cref{cor:independent} explains why the lattices in~\Cref{fig:dist_lattice,fig:camb_lattice} have the same number of elements---their undirected Galois graphs are isomorphic (unfortunately, these Galois graphs differ as undirected graphs beyond rank 3).
\end{remark}

We will consider the representation of a trim lattice arising from the independent sets of its Galois graph in a future paper.

  \section{Rowmotion on Semidistributive Lattices}
\label{sec:semidistributive}

In this section, we review the construction of rowmotion on semidistributive lattices due to Barnard~\cite{barnard2016canonical}.  There is a natural sequence of flips when the semidistributive lattice is also extremal; we invoke~\Cref{thm:main_thm} to conclude that this sequence defines rowmotion in slow motion on extremal semidistributive lattices by showing that such lattices are trim.  We relate the canonical join complex of an extremal semidistributive lattice to its Galois graph using the independence complex.

\subsection{Descriptive labellings of semidistributive lattices}

\begin{definition}
A lattice $\LL$ is called \defn{semidistributive} if
\begin{align*}
	x \vee y = x \vee z &\implies x \vee(y \wedge z) = x \vee y\\
	x \wedge y = x \wedge z &\implies x \wedge(y \vee z) = x \wedge y
\end{align*}
for all $x,y,z \in \LL$.
\end{definition}

It follows from the definition of semidistributivity that for any cover $x\lessdot y$ in $\LL$, there is a unique minimal element $j \in \LL$ such that $x\vee j = y$.  This element $j$ is necessarily join-irreducible, and we denote it by \[\gamma_\JJ(x \lessdot y):=\min \{ z \in \LL : x \vee z = y\}.\]  Dually, we write \[\gamma_\MM(x \lessdot y):=\max \{ z \in \LL : z \wedge y = x\}\] for the unique maximal element $m \in \MM$ such that $m \wedge y = x$, necessarily meet-irreducible.  We call $\gamma_\JJ$ and $\gamma_\MM$ \defn{semidistributive labellings}.

In particular, this construction associates a unique meet-irreducible element $m$ to a given join-irreducible element $j$, when applied to its unique cover \defn{$j_*\lessdot j$}.   Similarly, for a given meet-irreducible element $m$, we obtain a unique join-irreducible element $j$ using the cover \defn{$m \lessdot m^*$}.  This defines a bijection between join-irreducibles and meet-irreducibles~\cite[Theorem 2.54]{freese1995free}, and we follow Freese, Jezek, and Nation in denoting this bijection $\kappa: \JJ \to \MM$.  One can compute the bijection $\kappa$ and its inverse as \begin{align*}\kappa(j) &= \max \{ z : z \geq j_* \text{ but } z \not \geq j \} \text{ and} \\ \kappa^{-1}(m) &= \min \{ z : z \leq m^* \text{ but } z \not \leq m \}.\end{align*}

The following proposition regarding the consistency of $\kappa$ is implicit in~\cite{barnard2016canonical}.

  \begin{proposition} \label{equiv} Let $x\lessdot y$ be a cover relation in a semidistributive lattice $\LL$.  Then $\gamma_\MM(x \lessdot y) = \kappa(\gamma_\JJ(x\lessdot y))$.
  \end{proposition}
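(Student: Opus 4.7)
The plan is to prove both $\tilde m \le m$ and $m \le \tilde m$, where I set $j := \gamma_\JJ(x \lessdot y)$, $m := \gamma_\MM(x \lessdot y)$, and $\tilde m := \kappa(j)$. Recall $y = x \vee j$ with $j$ minimum having this property, $x = m \wedge y$ with $m$ maximum having this property, and $\tilde m = \max\{z : z \ge j_{*},\ z \not\ge j\}$.

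I would start with two elementary facts about the cover $x \lessdot y$, neither of which uses semidistributivity directly. First, $j \not\le x$: otherwise $y = x \vee j = x$. Second, $j_{*} \le x$: if $j_{*} \not\le x$ then $x < x \vee j_{*} \le x \vee j = y$, and since $y$ covers $x$ this forces $x \vee j_{*} = y$, contradicting the minimality of $j$ among elements joining with $x$ to give $y$.

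Next I would use these to squeeze $\tilde m$ against $m$. Since $x \ge j_{*}$ and $x \not\ge j$, the maximality in the definition of $\tilde m$ gives $x \le \tilde m$. Hence $x \le \tilde m \wedge y \le y$. Because $y$ covers $x$, the only alternative to $\tilde m \wedge y = x$ would be $\tilde m \wedge y = y$, i.e.\ $\tilde m \ge y \ge j$, which contradicts $\tilde m \not\ge j$. So $\tilde m \wedge y = x$, and the maximality defining $m$ yields $\tilde m \le m$.

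For the reverse inequality, I would check that $m$ satisfies both conditions defining $\tilde m$. Clearly $m \ge m \wedge y = x \ge j_{*}$. Moreover $m \not\ge j$: since $j \le y$, if $m \ge j$ then $x = m \wedge y \ge j$, forcing $y = x \vee j = x$, absurd. The maximality of $\tilde m$ then gives $m \le \tilde m$, completing the proof. I do not anticipate a real obstacle; semidistributivity is used only to guarantee that the minima and maxima in the definitions of $\gamma_\JJ$, $\gamma_\MM$, and $\kappa$ actually exist, after which the argument becomes a short order-theoretic game pivoting on the hypothesis that $y$ covers $x$.
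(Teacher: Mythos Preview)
Your proof is correct and follows essentially the same approach as the paper's. Both arguments first establish $j_*\le x$ (equivalently $j_*\vee x=x$), then use the cover $x\lessdot y$ to show $\kappa(j)\wedge y=x$ giving $\kappa(j)\le m$, and finally verify that $m$ lies in the set $\{z:z\ge j_*,\ z\not\ge j\}$ to get $m\le\kappa(j)$; the only cosmetic difference is that the paper phrases the last step via the equivalent condition $m\wedge j=j_*$.
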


  \begin{proof}  Write $j=\gamma_\JJ(x\lessdot y)$ and $m=\gamma_\MM(x \lessdot y)$.  We wish to show that $\kappa(j)=m$.

	Let $j_*\lessdot j$ be the unique cover relation below the join-irreducible $j$.  Now $x\leq j_*\vee x < y$, so $j_*\vee x = x$ and $j_*\leq x$.  Therefore $j_*=x\wedge j$.  On the other hand, by definition $j_*=\kappa(j)\wedge j$ and $\kappa(j)$ is the maximum element with this property, so $\kappa(j)\geq x$ but $\kappa(j)\not\geq y$ since $y\geq j$.  Thus, $\kappa(j)\wedge y=x$, which shows that $\kappa(j)\leq m$.

On the other hand, by definition of $m$, we have that $m\wedge y=x$.  Since $y \geq j$, we compute that $m\wedge j = m \wedge (y \wedge j)= (m \wedge y) \wedge j = x \wedge j=j_*$, so that $m\leq \kappa(j)$.

We have therefore shown that $\kappa(j)=m$, as desired.
\end{proof}

\subsection{Rowmotion on semidistributive lattices}
Let $\LL$ be a semidistributive lattice.  As usual, for any element $x\in \LL$, we define the sets of downward and upward labels
	\begin{align*}\down_\JJ(x) &:= \{\gamma_\JJ(y \lessdot x) : \text{ for all } y \text{ such that } y \lessdot x\}, \text{ and}\\
	\up_\JJ(x) &:= \{\gamma_\JJ(x \lessdot y) : \text{ for all } y \text{ such that } x \lessdot y\}.
\end{align*}

A subset $A \subseteq \JJ$ is a \defn{canonical join representation} of $x$ if:
\begin{itemize}
	\item $\bigvee A = x$ but $\bigvee A' < x$ for $A' \subset A$ and
	\item for any $B \subseteq \JJ$ such that $\bigvee B = x$, then for each $b \in B$ there exists $a \in A$ such that $a \leq b$.
\end{itemize}
The \defn{canonical meet representation} is defined analogously.

For a finite lattice $\LL$, semidistributivity is equivalent to the existence of a canonical join and meet representation for each element of $\LL$~\cite[Theorem 2.24]{freese1995free}.  In particular, the canonical join representation of an element is given by its downward labels and its canonical meet representation is $\kappa$ applied to its upward labels.

\begin{theorem}[{\cite[Lemma 3.3]{barnard2016canonical}}]
\label{thm:bar1}
	An element $\x$ of a semidistributive lattice $\LL$ is determined by $\down_\JJ(x)$, and by $\up_\JJ(x)$: \[x=\bigvee \down_\JJ(x) = \bigwedge \kappa\left(\up_\JJ(x)\right).\]
Moreover, $\down_\JJ(x)$ is the canonical join representation for $x$, and $\kappa\left(\up_\JJ(x)\right)$ is its canonical meet representation.
\end{theorem}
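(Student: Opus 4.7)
The plan is to invoke the standard fact that a finite lattice is semidistributive if and only if every element has a canonical join representation (CJR) and a canonical meet representation, and then to identify those distinguished representations with $\down_\JJ(x)$ and $\kappa(\up_\JJ(x))$. I will handle the join side first and obtain the meet side by duality, bridged by \Cref{equiv}.

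First I would show $\bigvee \down_\JJ(x) = x$: the inequality $\leq$ is immediate, and if $z := \bigvee \down_\JJ(x) < x$ I could pick a cover $z \leq y \lessdot x$, contradicting $\gamma_\JJ(y \lessdot x) \not\leq y$. Next I would note that every CJR $A$ of $x$ is an antichain of join-irreducibles (if $a_1 < a_2$ were both in $A$, then $A \setminus \{a_1\}$ would still join to $x$, violating irredundancy of the CJR).

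The main step is to prove $A = \down_\JJ(x)$. For $A \subseteq \down_\JJ(x)$: given $a \in A$, set $x_a := \bigvee(A \setminus\{a\}) < x$, pick a cover $x_a \leq y \lessdot x$, and observe that $y \vee a = x$, so $j := \gamma_\JJ(y \lessdot x) \leq a$. To upgrade this to equality I would let $B$ be $\{j\}$ together with the join-irreducibles weakly below $y$; in a finite lattice $\bigvee B = y \vee j = x$, so the universal property of the CJR $A$ forces some $a' \in A$ with $a' \leq j$, whence $a' \leq j \leq a$ combines with the antichain property of $A$ to give $a = a' = j \in \down_\JJ(x)$. For $\down_\JJ(x) \subseteq A$: given $j = \gamma_\JJ(y \lessdot x)$, the same set $B$ furnishes some $a \in A$ with $a \leq j$, while $\bigvee A = x \not\leq y$ produces $a' \in A$ with $a' \not\leq y$; then $y \vee a' = x$ and minimality give $j \leq a'$, so again the antichain property yields $a = a' = j \in A$.

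The meet statements follow by the order dual, which shows that $\{\gamma_\MM(x \lessdot z) : x \lessdot z\}$ is the canonical meet representation of $x$; by \Cref{equiv} this set equals $\kappa(\up_\JJ(x))$, yielding $x = \bigwedge \kappa(\up_\JJ(x))$. Since $x$ is recovered as the join of $\down_\JJ(x)$ and, through the bijection $\kappa$, as the meet of $\kappa(\up_\JJ(x))$, the element $x$ is determined by either label set. The only genuine obstacle is the back-and-forth identification of $A$ with $\down_\JJ(x)$; the rest is either an appeal to CJR existence or routine order-duality.
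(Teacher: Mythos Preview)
The paper does not supply its own proof of this statement: it is quoted as \cite[Lemma~3.3]{barnard2016canonical}, and the surrounding text simply records (via \cite[Theorem~2.24]{freese1995free}) that canonical join and meet representations exist in a semidistributive lattice, before asserting that they coincide with $\down_\JJ(x)$ and $\kappa(\up_\JJ(x))$.

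Your argument is a correct, self-contained proof of exactly that assertion. The two inclusions establishing $A=\down_\JJ(x)$ are sound: in the first, picking a cover $x_a\le y\lessdot x$, the minimality of $j=\gamma_\JJ(y\lessdot x)$ gives $j\le a$, and your set $B=\{j\}\cup\{\text{join-irreducibles}\le y\}$ joins to $x$, so the refinement clause of the canonical join representation produces $a'\in A$ with $a'\le j\le a$, whence $a'=a=j$ by the antichain property. The second inclusion is handled symmetrically. Your use of \Cref{equiv} to pass to the meet side is legitimate, since that proposition is proved in the paper independently of the present theorem. The only external input is the existence of canonical join and meet representations, which the paper itself cites from \cite{freese1995free}; given that, your proof stands on its own and could replace the bare citation.
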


The following result gives Condition (G2) on semidistributive lattices.  See also~\cite[Proposition 2.10, Lemma 2.11]{garver2016oriented} for proofs of the corresponding statements on congruence-uniform lattices (which are special cases of semidistributive lattices).

\begin{theorem}[{\cite[Corollary 1.5]{barnard2016canonical}}]
\label{thm:bar2}
	In a semidistributive lattice $\LL$, $A$ is a canonical join representation if and only if $\kappa(A)$ is a canonical meet representation.
\end{theorem}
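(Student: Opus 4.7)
The plan is to invoke \Cref{thm:bar1} to reformulate the statement. By \Cref{thm:bar1}, the canonical join representations of elements of $\LL$ are exactly the downward-label sets $\{\down_\JJ(x) : x \in \LL\}$, and dually the canonical meet representations are exactly $\{\kappa(\up_\JJ(y)) : y \in \LL\}$. Since $\kappa$ is a bijection $\JJ \to \MM$, the claim reduces to showing
\[
\{\down_\JJ(x) : x \in \LL\} = \{\up_\JJ(y) : y \in \LL\}
\]
as collections of subsets of $\JJ$. This is the semidistributive analog of \Cref{ptwo} for trim lattices, although the proof must differ, since semidistributive lattices need not admit the recursive decomposition $\LL = \LL_1 \sqcup \LL^1$ used there.

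For the forward direction, given $x$ with $A = \down_\JJ(x) = \{j_1, \ldots, j_k\}$, I would propose the candidate $y := \bigwedge_{i=1}^{k} \kappa(j_i)$ and show that $\up_\JJ(y) = A$. By \Cref{equiv}, each lower cover $x_i \lessdot x$ satisfies $\gamma_\MM(x_i \lessdot x) = \kappa(j_i)$, so $x_i = x \wedge \kappa(j_i)$; the element $y$ is thus the natural \emph{dual} of $x$, obtained by meeting the meet-irreducibles labelling its lower covers. To prove $A \subseteq \up_\JJ(y)$, I would verify, for each $j_i \in A$, that $y \vee j_i$ strictly covers $y$ (using $y \leq \kappa(j_i)$ together with $\kappa(j_i) \not\geq j_i$) and that $\gamma_\JJ(y \lessdot y \vee j_i) = j_i$. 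For $\up_\JJ(y) \subseteq A$, any upper cover $y \lessdot z$ with $\gamma_\JJ$-label $j'$ must satisfy $j' \not\leq y$, and hence $j' \not\leq \kappa(j_i)$ for some $i$; semidistributivity together with the defining property of $\kappa(j_i)$ then forces $j' \in A$.

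The main obstacle is verifying that $y \vee j_i$ genuinely covers $y$, as opposed to lying strictly above a chain of covers, since the label identification depends on this. Here I would invoke join-semidistributivity: if $y \vee j_i$ were not a cover of $y$, then the canonical join representation of $y \vee j_i$ would contain a join-irreducible strictly smaller than $j_i$, which, propagated back to $x$ via the identification $x_i = x \wedge \kappa(j_i)$, would contradict the irredundance of $A$ as the canonical join representation of $x$. The reverse direction of the theorem is proved dually: starting from a canonical meet representation $B = \kappa(\up_\JJ(y))$, set $x := \bigvee_{j \in \kappa^{-1}(B)} j$ and verify $\down_\JJ(x) = \kappa^{-1}(B)$ by the analogous construction with joins and meets exchanged.
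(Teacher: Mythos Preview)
The paper does not prove this theorem: it is quoted from \cite{barnard2016canonical} and used as a black box to derive \Cref{semitwo}. Your reduction is correct and in fact inverts the paper's logic: given \Cref{thm:bar1}, the statement of \Cref{thm:bar2} is equivalent to \Cref{semitwo}, which the paper deduces \emph{from} \Cref{thm:bar2} rather than the other way around. So what you are really proposing is an independent, direct proof of \Cref{semitwo}.

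That direct proof has genuine gaps. With $y=\bigwedge_i \kappa(j_i)$, you need $y\lessdot y\vee j_i$, but your justification does not go through. Knowing $y\le\kappa(j_i)$ and $\kappa(j_i)\lessdot\kappa(j_i)\vee j_i$ does not force $y\vee j_i$ to cover $y$; if $y\lessdot w<y\vee j_i$ with $\gamma_\JJ(y\lessdot w)=j'$, there is no reason $j'$ must be comparable to $j_i$, so the claim that ``the canonical join representation of $y\vee j_i$ would contain a join-irreducible strictly smaller than $j_i$'' is unsupported. The ``propagation back to $x$'' is also unclear: $x=\bigvee_i j_i$ and $y=\bigwedge_i\kappa(j_i)$ are unrelated elements of $\LL$, and the identity $x_i=x\wedge\kappa(j_i)$ gives you information about covers of $x$, not of $y$. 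Similarly, for $\up_\JJ(y)\subseteq A$, from $j'\not\le\kappa(j_i)$ you cannot conclude $j'\in A$: the defining property of $\kappa(j_i)$ characterizes elements above $(j_i)_*$ and not above $j_i$, which says nothing about an arbitrary join-irreducible $j'$.

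Barnard's actual argument proceeds differently: she characterizes when a pair $\{j,j'\}$ extends to a canonical join representation by an explicit lattice-theoretic condition, shows that $\kappa$ exchanges this condition with its meet analogue, and combines this with flagness of the canonical join complex. If you want a self-contained proof, that pairwise route is the one to follow; the global candidate $y=\bigwedge_i\kappa(j_i)$ is the right element in the end, but establishing its upper covers directly requires more than what you have written.
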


\begin{corollary}\label{semitwo}  Let $\LL$ be a semidistributive lattice.  Then \[\left\{ \down_\JJ(\x) : \x \in \LL\right\} = \left\{ \up_\JJ(\y) : \y \in \LL\right\}.\]
\end{corollary}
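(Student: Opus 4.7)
The plan is to bootstrap directly off \Cref{thm:bar1} and \Cref{thm:bar2}, together with the fact that $\kappa:\JJ\to\MM$ is a bijection. The key observation is that these two theorems together set up a correspondence
\[
\Big\{\text{canonical join representations in }\LL\Big\}\;\xrightleftharpoons[\kappa^{-1}]{\kappa}\;\Big\{\text{canonical meet representations in }\LL\Big\},
\]
and that the two sides of the desired equality are exactly the images (under the identity and under $\kappa^{-1}$ respectively) of these two collections.

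For the forward inclusion, I start with an arbitrary element $x \in \LL$ and set $A := \down_\JJ(x)$. By \Cref{thm:bar1}, $A$ is the canonical join representation of $x$. By \Cref{thm:bar2}, $\kappa(A)$ is then a canonical meet representation in $\LL$; let $y := \bigwedge \kappa(A) \in \LL$ be the element it represents. Applying \Cref{thm:bar1} to $y$ shows that $\kappa(\up_\JJ(y))$ is \emph{the} canonical meet representation of $y$. Since canonical meet representations of a given element are unique (an element has a unique minimal such representation by definition), we conclude $\kappa(A) = \kappa(\up_\JJ(y))$, and injectivity of $\kappa$ yields $\down_\JJ(x) = A = \up_\JJ(y)$. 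This shows $\{\down_\JJ(x) : x\in\LL\} \subseteq \{\up_\JJ(y) : y\in\LL\}$.

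The reverse inclusion is symmetric: given $y\in\LL$, set $B := \up_\JJ(y)$, so $\kappa(B)$ is the canonical meet representation of $y$ by \Cref{thm:bar1}. Then \Cref{thm:bar2} (in the other direction) tells us that $\kappa^{-1}(\kappa(B)) = B$ is a canonical join representation; let $x$ be the element it represents, so that $B = \down_\JJ(x)$ by \Cref{thm:bar1}. This gives the opposite inclusion.

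There is really no main obstacle here — once the two Barnard theorems and the bijectivity of $\kappa$ are in hand, the argument is formal. The one point worth being a little careful about is the uniqueness of the canonical join (respectively meet) representation of a given element, which I would invoke implicitly via the characterization in \Cref{thm:bar1} rather than re-prove: the canonical join representation, when it exists, is determined by its defining minimality and refinement properties, so two canonical join representations of the same element coincide.
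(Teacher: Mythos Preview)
Your proof is correct and follows essentially the same approach as the paper's own proof: both apply \Cref{thm:bar1} to identify $\down_\JJ(x)$ as a canonical join representation, invoke \Cref{thm:bar2} to conclude $\kappa(\down_\JJ(x))$ is a canonical meet representation of some $y$, and then use uniqueness (via \Cref{thm:bar1} again) together with injectivity of $\kappa$ to deduce $\down_\JJ(x)=\up_\JJ(y)$. The paper's version is terser and only writes out one inclusion explicitly, but the content is the same.
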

\begin{proof}
This follows from~\Cref{thm:bar1,thm:bar2}---$\down_\JJ(x)$ is a canonical join representation of $x$,
so $\kappa(\down_\JJ(x))$ is a canonical meet representation of some element,
say $y$.  Therefore, $\down_\JJ(x)=\up_\JJ(y)$.
\end{proof}

By~\Cref{thm:bar1} and~\Cref{semitwo}, semidistributive labellings of semidistributive lattices are descriptive, and so \Cref{eq:global_row} defines \defn{rowmotion} by $\row(x):=y,$ where $y$ is the unique element of a semidistributive lattice $\LL$ with $\down_\JJ(x)=\up_\JJ(y)$.

  \subsection{Extremal semidistributive lattices are trim}
Suppose that $\LL$ is a semi-distributive extremal lattice with a fixed maximal-length chain.  As in~\Cref{sec:extremal}, this chain allows us to index the join-irreducible elements and the meet-irreducible elements as $j_1,\ldots, j_n$ and $m_1,\ldots,m_n$ and we identify the join-irreducible elements with their indexing.  In this way, the cover relations of $\LL$ are labelled by \[\gamma(x\lessdot y):=i  \text{ if } \gamma_\JJ(x\lessdot y) = j_i,\]  and \Cref{equiv} shows that this labelling may be equivalently defined by \[\gamma(x\lessdot y):=i \text{ if } \gamma_\MM(x\lessdot y) = m_i.\]

The fixed maximal-length chain gives a linear order of the labels; we conclude that rowmotion can be computed in slow motion by flipping in this order from the following more general statement.
{
\renewcommand{\thetheorem}{\ref{thm:barn}}
  \begin{theorem}
Extremal semidistributive lattices are trim.  (By~\Cref{thm:main_thm}, rowmotion can therefore be computed in slow motion.)
    \end{theorem}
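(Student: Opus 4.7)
The plan is to reduce the theorem to Theorem~\ref{thm:overlapping}: since $\LL$ is already extremal, it will suffice to check that every cover relation of $\LL$ is overlapping. So I fix a maximal-length chain $\hat{0}=x_0\lessdot x_1\lessdot \cdots \lessdot x_n=\hat{1}$ together with the induced extremal indexing $j_1,\ldots,j_n$ of $\JJ$ and $m_1,\ldots,m_n$ of $\MM$, and take an arbitrary cover $y\lessdot z$ in $\LL$. Using semidistributivity, I set $j:=\gamma_\JJ(y\lessdot z)$ (the unique minimum element with $y\vee j=z$, necessarily join-irreducible), and by Proposition~\ref{equiv}, $\gamma_\MM(y\lessdot z)=\kappa(j)=:m$. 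Writing $i$ for the index of $j$ in the extremal enumeration, so that $j=j_i$, the goal is to show that this same $i$ lies in $y_\MM\cap z_\JJ$.

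The containment $i\in z_\JJ$ is immediate from $j_i=j\leq z$, so the crux of the argument is to show $i\in y_\MM$, for which it suffices to establish the clean identification $\kappa(j_i)=m_i$. The plan here is a two-step calculation: first, $\kappa(j_i)\not\geq x_i$ follows because $x_i\geq j_i$ while $\kappa(j_i)\not\geq j_i$ by definition of $\kappa$; second, $\kappa(j_i)\geq x_{i-1}$ reduces via the maximality definition of $\kappa(j_i)$ to checking that $x_{i-1}\geq (j_i)_*$ and $x_{i-1}\not\geq j_i$. The latter is the defining property of the extremal indexing. For the former, I will observe that $\gamma_\JJ(x_{i-1}\lessdot x_i)=j_i$ (by the uniqueness characterization of $j_i$ among join-irreducibles $\leq x_i$ but $\not\leq x_{i-1}$), so by the minimality of $j_i$ as the canonical element of this cover, $x_{i-1}\vee (j_i)_* < x_i$, forcing $x_{i-1}\vee (j_i)_*=x_{i-1}$ and hence $(j_i)_*\leq x_{i-1}$.

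Finally, I will invoke the fact that $m_i$ is the unique meet-irreducible satisfying $m_i\geq x_{i-1}$ and $m_i\not\geq x_i$ (a standard consequence of the extremal maximal orthogonal pair representation) to conclude $\kappa(j_i)=m_i$. Then $m=m_i\geq y$ gives $i\in y_\MM$, so $i\in y_\MM\cap z_\JJ$, the cover $y\lessdot z$ is overlapping, and Theorem~\ref{thm:overlapping} yields trimness. The main obstacle is the identification $\kappa(j_i)=m_i$: the semidistributive bijection $\kappa\colon \JJ\to\MM$ and the extremal indexing bijection $i\mapsto(j_i,m_i)$ are defined by entirely different mechanisms, and the proof must check that they align. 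Once this is in hand, everything else follows from unwinding the definitions.
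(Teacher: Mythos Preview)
Your proof is correct, and it takes a genuinely different route from the paper. The paper shows that the semidistributive labelling $\gamma$ is an interpolating EL-labelling and then invokes Theorem~\ref{thm:interpolating} to conclude left modularity; you instead verify directly that every cover is overlapping and invoke Theorem~\ref{thm:overlapping}. Both arguments ultimately rest on the alignment $\kappa(j_i)=m_i$ between the semidistributive bijection $\kappa$ and the extremal indexing. You prove this identity explicitly (via $(j_i)_*\leq x_{i-1}$, $\kappa(j_i)\geq x_{i-1}$, $\kappa(j_i)\not\geq x_i$, and the uniqueness of $m_i$ among meet-irreducibles in $(x_{i-1})_\MM\setminus(x_i)_\MM$), whereas the paper asserts the equivalent labelling by $m_i$ just before the proof as a consequence of Proposition~\ref{equiv} and then uses it inside the contradiction step. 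Once $\kappa(j_i)=m_i$ is in hand, your route is shorter: overlapping is immediate from $j_i\leq z$ and $m_i\geq y$. The paper's route has the side benefit of exhibiting the EL-shelling structure of $\LL$ explicitly, which is of independent interest but not needed for the bare statement of trimness.
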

\addtocounter{theorem}{-1}
}

\begin{proof}
	We prove that the labelling $\gamma$ defined above is an interpolating EL-labelling, from which we conclude that $\LL$ is left modular by~\Cref{thm:interpolating}, and hence trim.

    We first show that $\gamma$ is an EL-labelling.  Consider an interval $[x,z]$ in $\LL$.    Let \[i:=\min\{ k : j_k \in \JJ, j_k \leq z, j_k \not \leq x \}\] be the index of the minimal join-irreducible less than $z$ but not less than $x$.  We show that $x \lessdot x\vee j_i$.

	Recall that if $y \in \LL$ has maximal orthogonal pair $(X,Y)$, then we write $y_\JJ=X$ and $y_\MM=Y$.  Since $i$ is the smallest index of any join-irreducible below $z$ but not below $x$, if we write $A=(x\vee j_i)_\JJ \setminus x_\JJ$, then the minimum element of $A$ is $i$.

    Since $(j_i)_\MM \supseteq \{i+1,i+2,\ldots,n\}$, and $(x\vee j_i)_\MM=x_\MM \cap (j_i)_\MM$, if we write $B= x_\MM \setminus (x\vee j_i)_\MM$, then the maximum element of $B$ is $i$.

    Suppose now that there is some $w$ with $x \lessdot w<x\vee j_i$.  The smallest element of $w_\JJ\setminus x_\JJ$ is strictly greater than $i$, while the largest element of $x_\MM \setminus w_\MM$ is at most $i$.  This contradicts \Cref{equiv}.  Therefore $x\lessdot x\vee j_i$.

 By induction, this cover up from $x$ will be the beginning of an increasing chain from $x$ to $z$, and it will lexicographically precede all other chains. Since the label $i$ must appear somewhere on any chain from $x$ to $z$, any increasing chain must begin by $x\lessdot x\vee j_i$, so by induction the increasing chain is unique.

We now show $\gamma$ is interpolating.  Suppose $x \lessdot y \lessdot z$ is not the increasing chain from $x$ to $z$.  Let the increasing chain be
$x=x_0\lessdot x_1 \lessdot \dots \lessdot x_r=z$.  The label
$\gamma(x_0\lessdot x_1)$ must appear on the chain $x \lessdot y \lessdot z$,
and it clearly is not the label of $x\lessdot y$.  Thus it is the label of
$y\lessdot z$, establishing one of the two equalities required for
$\gamma$ to be interpolating.  But because the labelling $\gamma$ also admits a dual description in terms of meet-irreducibles, the other equality follows by duality.
\end{proof}

We conclude the following refinement of~\Cref{thm:rep_are_extremal}.
{
\renewcommand{\thecorollary}{\ref{cor:rep_are_trim}}
\begin{corollary}
If $A$ is representation finite and $\mod A$ has no cycles, then $G(A)$ is the Galois graph of a trim lattice.  The elements of $\LL(G(A))$ are naturally the torsion pairs of $A$, ordered with respect to inclusion of torsion classes (or reverse inclusion of torsion-free classes).
\end{corollary}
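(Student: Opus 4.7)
The plan is to combine three results already available in the excerpt. First, \Cref{thm:rep_are_extremal} tells us that under the stated hypotheses (representation finite, $\mod A$ has no cycles), the directed graph $G(A)$ is a valid Galois graph and the lattice $\LL(G(A))$ of maximal orthogonal pairs is exactly the lattice of torsion pairs of $A$ ordered by inclusion of torsion classes. Thus extremality is already in hand, and all that remains is to upgrade ``extremal'' to ``trim''.

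By \Cref{thm:barn}, an extremal semidistributive lattice is automatically trim. So the entire burden of the proof reduces to verifying that the lattice of torsion pairs is semidistributive. This is a known fact, cited in the paper as part of the discussion after \Cref{thm:rep_are_extremal}, with the relevant references being \cite{GM2015,DIRRT}: for any finite-dimensional algebra, the lattice of torsion classes ordered by inclusion is semidistributive. I would simply invoke this result as a black box (no new representation-theoretic work is required, and indeed the commented-out proof sketch in the source file follows exactly this path).

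Putting the pieces together: $\LL(G(A))$ is extremal by \Cref{thm:rep_are_extremal}; it is semidistributive by the cited results on torsion classes; hence it is trim by \Cref{thm:barn}. The identification of its elements with torsion pairs of $A$ (ordered by inclusion of torsion classes, equivalently reverse inclusion of torsion-free classes) is already part of \Cref{thm:rep_are_extremal} and passes through unchanged.

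No step here is really an obstacle once semidistributivity of the lattice of torsion classes is taken as known; the only subtle point is making sure that the identification of elements in \Cref{thm:rep_are_extremal} matches the identification induced by the trim structure (i.e., that the join-irreducibles of the extremal lattice correspond under \Cref{thm:extreme_representation} to the vertices of $G(A)$, i.e., to indecomposable modules). This is immediate from the construction of $G(A)$ and the definition of maximal orthogonal pairs, so no additional argument is needed.
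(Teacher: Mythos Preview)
Your proposal is correct and matches the paper's proof essentially verbatim: invoke \Cref{thm:rep_are_extremal} for extremality and the identification with torsion pairs, cite semidistributivity of the lattice of torsion classes from \cite{GM2015}, and conclude trimness via \Cref{thm:barn}. There is nothing to add.
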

\addtocounter{corollary}{-1}
}

\begin{proof}
\Cref{thm:rep_are_extremal} shows that the lattice of torsion pairs is extremal.  It is known that it is semi-distributive \cite[Theorem 4.5]{GM2015}.
By \Cref{thm:barn}, it is therefore trim.
\end{proof}

\begin{remark}
There does not seem to be a simple characterization in the literature of those algebras $A$ that are of finite representation type such that $\mod A$ has no cycles.  If we restrict for simplicity to algebras of the form $kQ/I$, with $I$ an admissible ideal, it is clear that $Q$ must have no oriented cycles, since otherwise there would be a cycle already among the projective indecomposable modules.  However, this restriction together with $A$ being of finite representation type does not guarantee that there are no cycles in $\mod A$, as
\cite[Example 1]{HR93} shows.  On the other hand, a sufficient condition for an algebra of finite representation type to have
no cycles is given in \cite{BL}.
\label{rem:rep_theory}
\end{remark}

\Cref{cor:rep_are_trim} also gives a new (uniform) proof that finite Cambrian lattices of simply-laced type are trim.  Since any finite Cambrian lattice is contained in some finite and simply-laced Cambrian lattice as the sublattice fixed under a group of lattice automorphisms, \cite[Theorem 4]{thomas2006analogue} gives us the following corollary.   

\begin{corollary}
Finite Cambrian lattices are trim.
\end{corollary}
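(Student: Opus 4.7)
The plan is to reduce to the simply-laced case via folding, and to handle the simply-laced case by representation theory using \Cref{cor:rep_are_trim}.

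First I would deal with finite Cambrian lattices of simply-laced (ADE) type. By the discussion in \Cref{ex:cambrian} (citing \cite{IT}), each such Cambrian lattice $\Camb_c(W)$ is the lattice of torsion pairs of the path algebra $A = kQ$ for a Dynkin quiver $Q$ whose underlying diagram is the Coxeter diagram of $W$, with orientation determined by the Coxeter element $c$. By Gabriel's theorem, $A$ is representation finite, and since $A$ is hereditary of Dynkin type, $\mod A$ has no cycles (any nonisomorphism between indecomposables goes in a fixed direction along the Auslander--Reiten quiver, which is finite and acyclic). So \Cref{cor:rep_are_trim} applies and gives trimness immediately.

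For the non-simply-laced finite types (namely $B_n=C_n$, $F_4$, $G_2$, $H_3$, $H_4$, $I_2(m)$), I would use the folding construction: each such finite Coxeter group $W$ arises as the fixed subgroup $(W')^{\sigma}$ of a simply-laced finite Coxeter group $W'$ under a diagram automorphism $\sigma$ (for the noncrystallographic types, $W'$ is allowed to be infinite in general, but each finite $W$ in question does embed in a \emph{finite} simply-laced $W'$: e.g.\ $H_3 \hookrightarrow D_6$, $H_4 \hookrightarrow E_8$, $I_2(m) \hookrightarrow$ a suitable $A$-type, etc.). One then checks that for a suitably compatible Coxeter element $c' = \sigma(c')$ of $W'$ the diagram automorphism induces a lattice automorphism of $\Camb_{c'}(W')$ which permutes $c'$-sortable elements, and that the fixed-point sublattice of $\Camb_{c'}(W')$ under the induced group $\langle \sigma \rangle$ of lattice automorphisms is isomorphic to $\Camb_c(W)$ (this is the content of Reading's folding theory of sortable elements). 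The excerpt explicitly asserts that ``any finite Cambrian lattice is contained in some finite and simply-laced Cambrian lattice as the sublattice fixed under a group of lattice automorphisms,'' so I would invoke this fact.

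Finally, I would invoke \cite[Theorem 4]{thomas2006analogue}, which says that if $G$ is a group acting on a trim lattice $\LL'$ by lattice automorphisms, then the sublattice $(\LL')^{G}$ of fixed points is again trim. Combining this with the simply-laced case finishes the proof. The main obstacle in this plan is not the trimness transfer itself but rather verifying the folding claim in the form required: one must ensure that the chosen Coxeter element $c'$ of $W'$ is $\sigma$-invariant, so that $\sigma$ genuinely descends to a lattice automorphism of $\Camb_{c'}(W')$, and then that the resulting fixed-point sublattice matches the smaller Cambrian lattice $\Camb_c(W)$ rather than some different quotient. Once those compatibilities are in place, the corollary follows by a one-line application of \cite[Theorem 4]{thomas2006analogue}.
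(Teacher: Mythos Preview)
Your proposal is correct and follows essentially the same argument as the paper: handle the simply-laced case via \Cref{cor:rep_are_trim} applied to path algebras of Dynkin quivers, then deduce the remaining types by realizing each finite Cambrian lattice as the fixed-point sublattice of a simply-laced one and invoking \cite[Theorem 4]{thomas2006analogue}. Your write-up supplies more detail than the paper (Gabriel's theorem, acyclicity of the Auslander--Reiten quiver, explicit folding embeddings), but the strategy is identical.
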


\subsection{Canonical join complexes and Galois graphs}

The \defn{canonical join complex} of a semidistributive lattice $\LL$ is the simplicial complex with vertices the join-irreducible elements $\JJ$ and faces the canonical join representations~\cite[Proposition 2.2]{reading2015noncrossing}.  The canonical join complex of a semidistributive lattice is flag, and hence determined by its edges~\cite[Theorem 1.1]{barnard2016canonical}.  We therefore define the \defn{canonical join graph} $C(\LL)$ to be the 1-skeleton of the canonical join complex of a semidistributive lattice $\LL$.

We observe from~\Cref{thm:bar1} that the canonical join complex of an extremal semidistributive lattice is identical to its independence complex.  Then \Cref{thm:complement} relates the canonical join complex and the Galois graph.

\begin{corollary}
Let $\LL$ be an extremal semidistributive lattice.  Then the complement of its Galois graph $G(\LL)$ is its canonical join graph $C(\LL)$.
\label{thm:semi_dist_complement}
\end{corollary}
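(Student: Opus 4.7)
The plan is to reduce the statement to the already-established Theorem~\ref{thm:complement} by identifying the canonical join complex of an extremal semidistributive lattice with its independence complex as a trim lattice. The bridge between these two complexes is the descriptive labelling: in both settings, the downward labels of an element completely determine it, and a careful comparison shows that the two labellings agree after identifying join-irreducibles with their indices.

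Concretely, first I would invoke Theorem~\ref{thm:barn} to conclude that $\LL$ is trim, so that the machinery of Section~\ref{sec:labelcomplex} applies and the independence graph $\gamma(\LL)$ (the $1$-skeleton of the independence complex) is defined. Next, I would observe that the labelling $\gamma$ used in Section~\ref{sec:semidistributive} to prove Theorem~\ref{thm:barn} coincides, under the identification of a join-irreducible $j_i$ with its index $i$, with both the left modular labelling of the trim lattice and the semidistributive labelling $\gamma_\JJ$. Indeed, the proof of Theorem~\ref{thm:barn} constructs $\gamma$ from a fixed maximal-length chain in precisely the way that the trim labelling is constructed in Section~\ref{sec:trim}, and Proposition~\ref{equiv} together with the definition of $\gamma_\JJ$ shows that its value on a cover relation $x\lessdot y$ is the index $i$ such that $\gamma_\JJ(x\lessdot y)=j_i$.

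With this identification, the set $\down(x)$ from Section~\ref{sec:labelcomplex} is precisely the set of indices of the elements of the canonical join representation $\down_\JJ(x)$ supplied by Theorem~\ref{thm:bar1}. Hence the faces of the canonical join complex are exactly the faces of the independence complex $\gamma(\LL)$, and in particular the canonical join graph $C(\LL)$ equals the independence graph $\gamma(\LL)$ after the identification $j_i\leftrightarrow i$. Applying Theorem~\ref{thm:complement}, which states that the complement of $G(\LL)$ is $\gamma(\LL)$, yields the desired equality $C(\LL)=\overline{G(\LL)}$.

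The only substantive step is the second one, the identification of the two labellings; once this is in hand, the conclusion is a direct citation of Theorem~\ref{thm:complement}. I do not foresee serious obstacles, since the alignment of the two labellings is essentially what the proof of Theorem~\ref{thm:barn} already establishes: both labellings assign to a cover $x\lessdot y$ the index of the unique join-irreducible that joins with $x$ to give $y$ minimally, which is the same as the unique index in $y_\JJ\setminus x_\JJ$ picked out by Corollary~\ref{cor:contains}.
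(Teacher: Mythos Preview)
Your proposal is correct and follows essentially the same approach as the paper: identify the canonical join complex with the independence complex via the observation that (after identifying $j_i$ with $i$) the semidistributive downward labels $\down_\JJ(x)$ agree with the trim downward labels $\down(x)$, and then apply Theorem~\ref{thm:complement}. The paper's argument is terser---it simply cites Theorem~\ref{thm:bar1} to equate the two complexes and then Theorem~\ref{thm:complement}---but your more explicit justification of why the two labellings coincide (via Theorem~\ref{thm:barn}, Proposition~\ref{equiv}, and Corollary~\ref{cor:contains}) fills in exactly the step the paper leaves implicit.
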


We note that it is possible for more than one semidistributive lattice to have the same canonical join graph (for example, the weak order on $\mathfrak{S}_3$ on the right of~\Cref{fig:trim_semi}, the Cambrian lattice of type $B_2$ in~\Cref{fig:b2camb}, $J([2]\times[2])$, and $J(\Phi^+(B_2))$ all have the same canonical join graph).

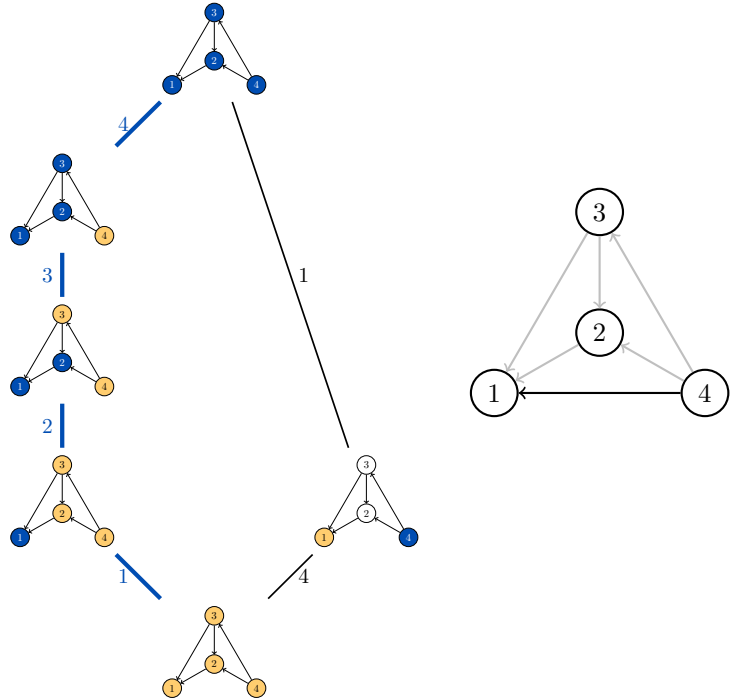
\begin{figure}[htbp]
\raisebox{-0.6\height}{\scalebox{.8}{\begin{tikzpicture}[scale=2.5]
\node (x0) [align=center] at (0,0) {\scalebox{.5}{\begin{tikzpicture}[scale=.8] \node (a) [draw=black,circle,thick,fill=meet] at (-1.732,-1) {$1$}; \node (b) [draw=black,circle,thick,fill=meet] at (1.732,-1) {$4$}; \node (c) [draw=black,circle,thick,fill=meet] at (0,0) {$2$}; \node (d) [draw=black,circle,thick,fill=meet] at (0,2) {$3$}; \draw[<-,thick] (a) to (c); \draw[->,thick] (b) to (c); \draw[->,thick] (b) to (d); \draw[->,thick] (d) to (a); \draw[->,thick] (d) to (c); \end{tikzpicture}}};
\node (x1) [align=center]at (-1,1) {\scalebox{.5}{\begin{tikzpicture}[scale=.8] \node (a) [draw=black,circle,thick,fill=join,text=white] at (-1.732,-1) {$1$}; \node (b) [draw=black,circle,thick,fill=meet] at (1.732,-1) {$4$}; \node (c) [draw=black,circle,thick,fill=meet] at (0,0) {$2$}; \node (d) [draw=black,circle,thick,fill=meet] at (0,2) {$3$}; \draw[<-,thick] (a) to (c); \draw[->,thick] (b) to (c); \draw[->,thick] (b) to (d); \draw[->,thick] (d) to (a); \draw[->,thick] (d) to (c); \end{tikzpicture}}};
\node (j2) [align=center] at (1,1) {\scalebox{.5}{\begin{tikzpicture}[scale=.8] \node (a) [draw=black,circle,thick,fill=meet] at (-1.732,-1) {$1$}; \node (b) [draw=black,circle,thick,fill=join,text=white] at (1.732,-1) {$4$}; \node (c) [draw=black,circle,thick,fill=white] at (0,0) {$2$}; \node (d) [draw=black,circle,thick,fill=white] at (0,2) {$3$}; \draw[<-,thick] (a) to (c); \draw[->,thick] (b) to (c); \draw[->,thick] (b) to (d); \draw[->,thick] (d) to (a); \draw[->,thick] (d) to (c); \end{tikzpicture}}};
\node (x2) [align=center] at (-1,2) {\scalebox{.5}{\begin{tikzpicture}[scale=.8] \node (a) [draw=black,circle,thick,fill=join,text=white] at (-1.732,-1) {$1$}; \node (b) [draw=black,circle,thick,fill=meet] at (1.732,-1) {$4$}; \node (c) [draw=black,circle,thick,fill=join,text=white] at (0,0) {$2$}; \node (d) [draw=black,circle,thick,fill=meet] at (0,2) {$3$}; \draw[<-,thick] (a) to (c); \draw[->,thick] (b) to (c); \draw[->,thick] (b) to (d); \draw[->,thick] (d) to (a); \draw[->,thick] (d) to (c); \end{tikzpicture}}};
\node (x5) [align=center] at (-1,3) {\scalebox{.5}{\begin{tikzpicture}[scale=.8] \node (a) [draw=black,circle,thick,fill=join,text=white] at (-1.732,-1) {$1$}; \node (b) [draw=black,circle,thick,fill=meet] at (1.732,-1) {$4$}; \node (c) [draw=black,circle,thick,fill=join,text=white] at (0,0) {$2$}; \node (d) [draw=black,circle,thick,fill=join,text=white] at (0,2) {$3$}; \draw[<-,thick] (a) to (c); \draw[->,thick] (b) to (c); \draw[->,thick] (b) to (d); \draw[->,thick] (d) to (a); \draw[->,thick] (d) to (c); \end{tikzpicture}}};
\node (x3) [align=center] at (0,4) {\scalebox{.5}{\begin{tikzpicture}[scale=.8] \node (a) [draw=black,circle,thick,fill=join,text=white] at (-1.732,-1) {$1$}; \node (b) [draw=black,circle,thick,fill=join,text=white] at (1.732,-1) {$4$}; \node (c) [draw=black,circle,thick,fill=join,text=white] at (0,0) {$2$}; \node (d) [draw=black,circle,thick,fill=join,text=white] at (0,2) {$3$}; \draw[<-,thick] (a) to (c); \draw[->,thick] (b) to (c); \draw[->,thick] (b) to (d); \draw[->,thick] (d) to (a); \draw[->,thick] (d) to (c); \end{tikzpicture}}};
\draw[-,thick,color=join,line width=2pt] (x0) to node[midway, left] {$1$} (x1) to node[midway, left] {$2$} (x2) to node[midway, left] {$3$} (x5) to node[midway, left] {$4$} (x3);
\draw[-,thick] (x0) to node[midway, right] {$4$} (j2) to node[midway, right] {$1$} (x3);
\end{tikzpicture}}} \hspace{1em}
\raisebox{-0.6\height}{\scalebox{1}{\begin{tikzpicture}[scale=.8] \node (a) [draw=black,circle,thick,fill=white] at (-1.732,-1) {$1$}; \node (b) [draw=black,circle,thick,fill=white] at (1.732,-1) {$4$}; \node (c) [draw=black,circle,thick,fill=white] at (0,0) {$2$}; \node (d) [draw=black,circle,thick,fill=white] at (0,2) {$3$}; \draw[<-,thick,lightgray] (a) to (c); \draw[->,thick,lightgray] (b) to (c); \draw[->,thick,lightgray] (b) to (d); \draw[->,thick,lightgray] (d) to (a); \draw[->,thick,lightgray] (d) to (c); \draw[->,thick] (b) to (a); \end{tikzpicture}}}
\caption{On the left is the extremal semidistributive Cambrian lattice of type $B_2$.  Forgetting orientation, on the right is its canonical join graph (with complementary Galois graph in light gray), which is also the canonical join graph of the non-extremal semidistributive lattice on the right of~\Cref{fig:trim_semi}.}
\label{fig:b2camb}
\end{figure}


\section{History and Examples}
\label{sec:examples}

\subsection{History}
Rowmotion was introduced by Duchet in~\cite{duchet1974hypergraphes}; studied for the Boolean lattice by Brouwer and Schrijver~\cite{brouwer1974period,brouwer1975dual};  and (still for the Boolean lattice) related to matroid theory by Deza and Fukuda~\cite{deza1990loops}.  Cameron and Fon-der-Flaass considered rowmotion on the product of two and then three chains \cite{fon1993orbits,cameron1995orbits}.  Because the orbit structure of rowmotion on Boolean lattices is so wild, much of the effort in the references above is dedicated to understanding which orbit sizes are realizable.

Its study then apparently lay dormant for over a decade until Panyushev resurrected it in the form of a series of conjectures of the orbit structure of rowmotion on the root posets of Lie algebras~\cite{panyushev2009orbits}.  The focus then shifted to finding equivariant bijections to natural combinatorial objects, and Stanley completely characterized the orbit structure of rowmotion on the product of two chains combinatorially~\cite{stanley2009promotion}.  Armstrong, Stump, and Thomas~\cite{armstrong2013uniform} then went on to resolve Panyushev's conjectures using noncrossing partitions, while Striker and Williams  unified and extended various results by relating rowmotion to \textit{jeu-de-taquin} and made linguistic contributions to the theory~\cite{striker2012promotion}.

This popularization of rowmotion led to a swell of related work falling under Propp's heading of ``dynamical algebraic combinatorics.''

  Rush and Shi obtained a beautiful result on rowmotion on minuscule posets \cite{rush2013orbits}, which Rush built on in~\cite{rush2015orbits,rush2016order}.  Propp and Roby returned to the product of two chains and introduced the notion of homomesy~\cite{propp2015homomesy}, which led to research on the distribution of combinatorial statistics across orbits.
Work of Dilks, Pechenik, Striker, and later Vorland connected rowmotion to Thomas and Yong's K-theoretic \textit{jeu-de-taquin}~\cite{dilks2017resonance,dilks2017increasing}.  Striker further extended rowmotion by concentrating on generalizing the concept of toggle~\cite{striker2015toggle,striker2016rowmotion}.  Joseph completed one branch of this program by establishing the relationship between toggles on antichains and toggles on order ideals~\cite{joseph2017antichain}, and other variants of toggles on more varied combinatorial objects appeared in~\cite{schilling2017braid,chan2017expected,einstein2016noncrossing}.

Motivated by Berenstein and Kirillov's action on Gelfand-Tsetlin patterns \cite{kirillov1995groups}, Einstein and Propp considered a piecewise-linear lifting of rowmotion to the order polytope of a poset, and defined an even more general birational generalization \cite{einstein2013combinatorial,einstein2014piecewise}.  Birational rowmotion was studied by Grinberg and Roby \cite{grinberg2015iterative,grinberg2016iterative}; Glick later showed that one of the Grinberg-Roby results was equivalent to an instance of Zamolodchikov periodicity.   Most recently, Galashin and Pylyavskyy generalized the notion of birational rowmotion to strongly connected directed graphs by introducing $R$-systems \cite{galashin2017r}.

\subsection{Boolean lattices} Rowmotion on the lattice of order ideals of \defn{Boolean lattices} was studied in~\cite{duchet1974hypergraphes,brouwer1974period,brouwer1975dual,deza1990loops}; its order is unknown in general: ``the question of which [lengths of orbits] occur for a certain $n$ seems to be untractable''~\cite{brouwer1975dual}.   for $n=1,2,3,4,5,6,7,\ldots$, the maximum orbit sizes are $3,4,5,6,27,\geq 1032,\geq 3791$~\cite{deza1990loops}.

\subsection{Root posets} Rowmotion on \defn{root posets} of Weyl groups was first considered by Panyushev in~\cite{panyushev2009orbits}.  Order ideals of the root poset are called nonnesting partitions~\cite{reiner1997non}, and the order of rowmotion on nonnesting partitions was proven to be $2h$ in~\cite{armstrong2013uniform} by defining an equivariant bijection to the noncrossing partitions under the Kreweras complement (see~\Cref{fig:dist_lattice,fig:camb_lattice}).

Related posets of interest include the poset of generalized nonnesting partitions and the restriction of weak order in the affine symmetric group to simultaneous coprime $(a,b)$-cores, but small computations suggest that the order of rowmotion is not generally well-behaved on these generalizations.

\subsection{Minuscule posets} Rowmotion on \defn{minuscule posets} (arising from highest-weight representations of minuscule weights of Lie algebras) was studied combinatorially for special cases in~\cite{striker2012promotion}, and systematically in~\cite{rush2013orbits,rush2015orbits,rush2016order}.  The order of rowmotion is the Coxeter number $h$: order ideals in a minuscule poset correspond to certain inversion sets of elements in a parabolic quotient, and rowmotion on the order ideals is conjugate to the action of a Coxeter element on the parabolic quotient.  The case of the product of two chains has recieved special attention~\cite{stanley2009promotion,propp2015homomesy,grinberg2015iterative}.

A simple extension---rowmotion on the adjoint crystal (which happens to be a semidistributive lattice)---is \emph{often} well-behaved, for example giving the expected $n$ orbits of size $h+1$ in types $D$ and $E$.

\subsection{Rational Dyck paths}  For $a$ and $b$ coprime, rowmotion on \defn{Dyck paths} in an $a \times b$ rectangle---that is, lattice paths staying above the main diagonal, ordered as a distributive lattice by inclusion---has order $a+b-1$.  This action is related to a rational Kreweras complement and generalizes the example of rowmotion on the type $A$ root poset, since Dyck paths in an $n \times (n+1)$ rectangle recover the nonnesting partitions in the root poset of type $A_n$~\cite{armstrong2013rational,bodnar2015cyclic,bodnar2017rational}.

\subsection{Rational Tamari lattices}
Bergeron and Pr\'{e}ville-Ratelle's \defn{$m$-Tamari lattices} may be defined as certain intervals in the Tamari lattice (and are therefore trim)~\cite{bergeron2012combinatorics,bergeron2011higher,bousquet2012number}.  More generally, one may define a trim \defn{$(a,b)$-Tamari lattice} by rotations on Dyck paths in an $a \times b$ rectangle for $a$ and $b$ coprime.  We {\bf conjecture} that rowmotion on the $(a,b)$-Tamari lattice has order $a+b-1$.  More generally, Pr\'{e}ville-Ratelle and Viennot have defined Tamari lattices on paths staying above a fixed path; as intervals in larger Tamari lattices, these generalizations are all trim~\cite{preville2014extension}. %

\subsection{Grid-Tamari lattices}\label{sec:gridtamari}  McConville's $m \times n$ \defn{Grid-Tamari lattices} are semidistributive lattices~\cite{mcconville2017lattice}, and we {\bf conjecture} that they are trim when $\min(m,n) \leq 3$.  The Galois graph for the (trim) $3 \times 3$ Grid-Tamari lattice is drawn on the left of~\Cref{fig:galois_examples}.  Although the number of vertices of the $m\times n$ Grid-Tamari lattice is equal to the number of standard Young tableaux (SYT) of shape $m \times n$, a definition of cover relations on SYT is not known explicitly~\cite{thomas}.  While the $2 \times n$ Grid-Tamari lattice recovers the Tamari lattice and promotion on $2 \times n$ SYT is in equivariant bijection with the Kreweras complement on noncrossing partitions, we do not know the operation on $m\times n$ SYT corresponding to rowmotion on the $m\times n$ Grid-Tamari lattice (it isn't promotion).

\subsection{Cambrian and $m$-Cambrian lattices}  Reading's \defn{Cambrian lattices} are both trim and semi-distributive---rowmotion on the Cambrian lattices recovers the Kreweras complement on noncrossing partitions, and has order $2h$~\cite{reading2007sortable,reading2007clusters}.  The $m$-Cambrian lattices were introduced in~\cite{stump2015cataland}, and rowmotion there is easily seen to have order $(m+1)h$ using Armstrong's generalization of the Kreweras complement to $m$-noncrossing partitions.  The Galois graph for the $2$-Cambrian lattice of type $A_2$ is drawn on the right of~\Cref{fig:galois_examples}.

\subsection{Torsion pairs of tilted algebras}

Let $A$ be a hereditary Artin algebra.  Choose some torsion pair $(X,Y)$, which by~\Cref{sec:rep_theory} we can also think of as a maximal orthogonal pair for the Galois graph defined on the indecomposable $A$-modules.
Now define a new Galois graph $G_{(X,Y)}$ as follows.  Its vertex set consists of the indecomposables in $X$ together with the indecomposables of $Y$.  Restricted to each of $X$ and $Y$, the graph $G_{(X,Y)}$ agrees with $G$.  However, $G_{(X,Y)}$ does not have any of the arrows from $Y$ to $X$ which appear in $G$.  Rather, for each $M$ in $X$ and $N$ in $Y$, we add an arrow from $M$ to $N$ if and only if $\Hom(N, \tau M)\ne 0$.  Since we are in the hereditary setting, $\tau$ can be understood combinatorially: it sends $M$ to 0 if $M$ is projective, and otherwise acts on
dimension vectors by $s_n\cdots s_1$, where the simple reflections are numbered in an order consistent with the order given by the transitive closure of the Galois graph.  The resulting graph $G_{(X,Y)}$ is the Galois graph for the tilted algebra obtained by tilting $A$ at $(X,Y)$, in the sense of \cite[Corollary I.2.2]{HRS}, and the corresponding lattice is therefore trim by~\Cref{cor:rep_are_trim}.  Although the Hasse diagrams of the torsion pairs of these tilted algebras are regular, rowmotion does not appear to have predictable order: already in $A_4$ there are tilted algebras for which the order of rowmotion is $90$.



\subsection{biCambrian lattices}\label{sec:bicambrian}  Barnard and Reading introduced the \defn{biCambrian lattices} in~\cite{reading2015coxeter} as the restriction of weak order to those elements whose canoncial join representation uses only join-irreducible $c$- or $c^{-1}$-sortable elements, where $c$ is a bipartite Coxeter element.  For the coincidental types $A,B,H_3,$ and $I_2(m)$, Barnard and Reading's doubled root posets coincide with certain minuscule posets, and the number of bisortable elements is the same as the number of order ideals in those minusucle posets.   We {\bf conjecture} that the orbit structure of rowmotion on the (semidistributive) biCambrian lattices of those types coincides with the orbit structure of rowmotion on the corresponding minuscule posets.

\subsection{Weak Order}  \defn{Weak order} on a finite Coxeter group is a semidistributive lattice, but the order of rowmotion appears unpredictable---for the symmetric groups $\mathfrak{S}_n$ for $n=1,2,3,4,5,6,7,\ldots$, the maximum orbit sizes are $1$, $2$, $4$, $12$, $20$, $128$, $412,\ldots$.

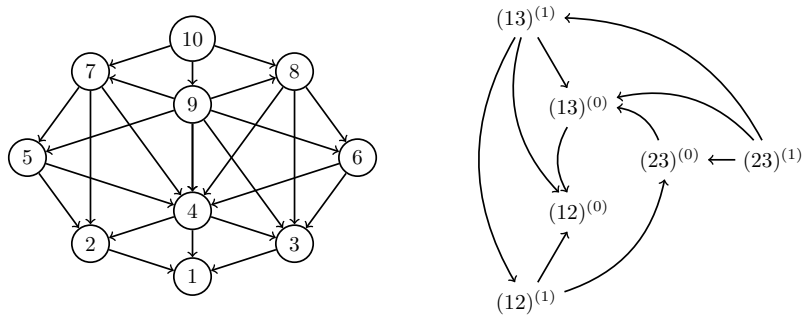
\begin{figure}[htbp]
\raisebox{-0.6\height}{\scalebox{0.8}{\begin{tikzpicture}[scale=1.5] \node (a) [draw=black,circle,thick,fill=white] at (1.80902,0) {$6$}; \node (b) [draw=black,circle,thick,fill=white] at (1.11803,-0.951057) {$3$}; \node (c) [draw=black,circle,thick,fill=white] at (0,-.587785) {$4$}; \node (d) [draw=black,circle,thick,fill=white] at (0,0.587785) {$9$}; \node (e) [draw=black,circle,thick,fill=white] at (1.11803,.951057) {$8$}; \node (f) [draw=black,circle,thick,fill=white] at (-1.11803,.951057) {$7$};  \node (g) [draw=black,circle,thick,fill=white] at (-1.80902,0) {$5$}; \node (h) [draw=black,circle,thick,fill=white] at (-1.11803,-.951057) {$2$}; \node (i) [draw=black,circle,thick,fill=white] at (0,1.31433) {$10$}; \node (j) [draw=black,circle,thick,fill=white] at (0,-1.31433) {$1$}; \draw[->,thick] (b) to (j);  \draw[->,thick] (h) to (j);  \draw[->,thick] (c) to (h);  \draw[->,thick] (c) to (b);  \draw[->,thick] (c) to (j);  \draw[->,thick] (a) to (b);  \draw[->,thick] (a) to (c);  \draw[->,thick] (d) to (c);  \draw[->,thick] (d) to (b);  \draw[->,thick] (d) to (a);  \draw[->,thick] (d) to (c);  \draw[->,thick] (g) to (c);  \draw[->,thick] (d) to (e);  \draw[->,thick] (e) to (c);  \draw[->,thick] (e) to (b); \draw[->,thick] (e) to (a);  \draw[->,thick] (g) to (h);  \draw[->,thick] (f) to (g);  \draw[->,thick] (f) to (h);  \draw[->,thick] (f) to (c);  \draw[->,thick] (d) to (f);  \draw[->,thick] (d) to (g);  \draw[->,thick] (i) to (f);  \draw[->,thick] (i) to (d);  \draw[->,thick] (i) to (e);  \end{tikzpicture}}}\hspace{3em} \raisebox{-0.6\height}{\scalebox{0.8}{\begin{tikzpicture}{scale=1.4}
\node (1) at (1,0) {$(23)^{(0)}$};
\node (2) at (2.7,0) {$(23)^{(1)}$};
\node (3) at (-.5,.866) {$(13)^{(0)}$};
\node (4) at (-1.359,2.3541) {$(13)^{(1)}$};
\node (5) at (-.5,-.866) {$(12)^{(0)}$};
\node (6) at (-1.359,-2.3541) {$(12)^{(1)}$};
\draw[->,thick] (2) to (1);
\draw[->,thick] (4) to (3);
\draw[->,thick] (6) to (5);
\draw[->,thick] (2) to [bend right] (3);
\draw[->,thick] (2) to [bend right] (4);
\draw[->,thick] (1) to [bend right] (3);
\draw[->,thick] (3) to [bend right] (5);
\draw[->,thick] (4) to [bend right] (5);
\draw[->,thick] (4) to [bend right] (6);
\draw[->,thick] (6) to [bend right] (1);
\end{tikzpicture}}}
\caption{On the left is the Galois graph for the $3 \times 3$ Grid-Tamari lattice.  On the right is the Galois graph for a $2$-Cambrian lattice of type $A_2$.}
\label{fig:galois_examples}
\end{figure}

\section*{Acknowledgements}
The first author was partially supported by an NSERC Discovery Grant and the Canada Research Chairs program.

We thank Thomas McConville for his observations regarding~\Cref{sec:gridtamari}.  The second author thanks Emily Barnard for a conversation at FSPAC 2015 regarding~\Cref{sec:bicambrian}.  We thank Darij Grinberg and Nathan Reading for kindly pointing out typos in an earlier version of this manuscript.

\bibliographystyle{amsalpha}
\bibliography{trim}
\end{document}